\newcounter{denseversion}
\newcounter{authorcounter}
\newcounter{adresscounter}
\def\title#1{\gdef\@title{#1}}
\def\@title{}
\def\subtitle#1{\gdef\@subtitle{#1}}
\def\@subtitle{}
\def\authortagsused{0}
\def\adresstag#1{\if!#1!\else$^{\;#1\;}$\fi}
\renewcommand{\author}[2][]{
  \stepcounter{authorcounter}
  \if!#1!\else\gdef\authortagsused{1}\fi
  \ifnum\value{authorcounter}=1
    \def\@authorstringa{#2\adresstag{#1}}
    \def\@authorstringb{#2}
    \def\@authorstringc{#2\adresstag{#1}}
  \else
    \g@addto@macro\@authorstringa{\ and #2\adresstag{#1}}
    \g@addto@macro\@authorstringb{\ and #2}
    \g@addto@macro\@authorstringc{\\#2\adresstag{#1}}
  \fi}
\def\@author{\ifnum\value{denseversion}=0\@authorstringa\else\@authorstringb\fi}
\def\@adressstringa{}
\def\@adressstringb{}
\newcommand{\adress}[2][]{
  \stepcounter{adresscounter}
  \ifnum\value{adresscounter}=1
    \g@addto@macro\@adressstringa{\ifnum\authortagsused=0\def\br{\\}\else\def\br{, }\fi\adresstag{#1}#2}
    \g@addto@macro\@adressstringb{\def\br{\\}\adresstag{#1}\parbox[t]{14cm}{#2}}
  \else
    \g@addto@macro\@adressstringa{\\[\bigskipamount]\adresstag{#1}#2}
    \g@addto@macro\@adressstringb{\\[\medskipamount]\adresstag{#1}\parbox[t]{14cm}{#2}}
  \fi}
\def\@adress{\ifnum\value{denseversion}=0\@adressstringa\else\@adressstringb\fi}
\def\preprint#1{\gdef\@preprint{#1}}
\def\@preprint{}
\def\keywords#1{\gdef\@keywords{#1}}
\def\@keywords{}
\def\msc#1{\gdef\@msc{#1}}
\def\@msc{}
\def\email#1{
   \gdef\@email{#1}
   \g@addto@macro\@authorstringc{ {\it (#1)}}}
\def\@email{}
\def\dedication#1{\gdef\@dedication{#1}}
\def\@dedication{}
\def\mybaselinestretch#1{\gdef\@mybaselinestretch{#1}}
\def\@mybaselinestretch{}
\def\refname{References}
\renewcommand{\baselinestretch}{\@mybaselinestretch}
\def\denseversion{
  \setcounter{denseversion}{1}
  \newgeometry{left=3cm,right=3cm,top=3cm}
  \mybaselinestretch{1.1}
  \renewcommand{\baselinestretch}{\@mybaselinestretch}
  \normalfont
  \fancyfoot[C]{\itshape{\hspace{2.5cm}--$\,\,$\thepage$\,\,$--}}}
\newlength{\myparskip}
\newlength{\myproofparskip}
\renewcommand{\emph}[1]{\def\reserved@a{it}\ifx\f@shape\reserved@a\uline{#1}\else\textit{#1}\fi}
\newcommand{\mytableofcontents}{
   \ifnum\value{denseversion}=0
     \tableofcontents
   \else
     \renewcommand{\baselinestretch}{0.8}
     \normalfont
     \tableofcontents
     \renewcommand{\baselinestretch}{\@mybaselinestretch}
     \normalfont
   \fi}
\newlength{\zeilenlaenge}
\def\putindent#1{
  \settowidth{\zeilenlaenge}{#1}
  \ifnum\zeilenlaenge>\textwidth
    #1
  \else
    \noindent #1
  \fi
}
\def\href#1#2{#2}
\def\kohyp{
  \usepackage{hyperref}
  \hypersetup{
    linktocpage = true,
    pdftitle = {\@title},
    pdfauthor = {\@author},
    pdfkeywords = {\@keywords},    
    bookmarksopen = true,
    bookmarksopenlevel = 1
  }}  
\def\showkeywords{\begin{flushleft}\footnotesize\textbf{Keywords}: \@keywords\end{flushleft}}
\def\showmsc{\begin{flushleft}\footnotesize\textbf{MSC 2010}: \@msc\end{flushleft}}
\newcounter{mythm}[subsection]
\newcounter{mainthm}
\def\setsecnumdepth#1{
  \setcounter{secnumdepth}{#1}
  \setcounter{mythm}{0}
  \ifnum \c@secnumdepth >0
    \ifnum \c@secnumdepth >1
      \def\themythm{\thesubsection.\arabic{mythm}}
      \numberwithin{equation}{subsection}
      \renewcommand\theequation{\thesubsection.\arabic{equation}}
    \else
      \def\themythm{\thesection.\arabic{mythm}}
      \numberwithin{equation}{section}
      \renewcommand\theequation{\thesection.\arabic{equation}}
    \fi
  \else
    \def\themythm{\arabic{mythm}}
  \fi}
\newenvironment{mythmenv}{\strut\ \setlength{\parskip}{\myproofparskip}}{\setlength{\parskip}{\myparskip}}
\newlength{\mythmskip}
\newlength{\mythmtopskip}
\newtheoremstyle{mythmstylea}{\mythmtopskip}{\mythmskip}{\it}{}{\bf}{.}{0em}{}
\newtheoremstyle{mythmstyleb}{\mythmtopskip}{\mythmskip}{}{}{\bf}{.}{0em}{}
\theoremstyle{mythmstylea}
\newtheorem{mytheorem}[mythm]{Theorem}
\newtheorem{mydefinition}[mythm]{Definition}
\newtheorem{mycorollary}[mythm]{Corollary}
\newtheorem{myproposition}[mythm]{Proposition}
\newtheorem{mylemma}[mythm]{Lemma}
\newtheorem{mymaintheorem}[mainthm]{Theorem}
\newtheorem{mymaincorollary}[mainthm]{Corollary}
\newtheorem{mymainproposition}[mainthm]{Proposition}
\newtheorem{mymaindefinition}[mainthm]{Definition}
\theoremstyle{mythmstyleb}
\newtheorem{myremark}[mythm]{Remark}
\newtheorem{myexample}[mythm]{Example}
\newtheorem{myexercise}[mythm]{Exercise}
\newenvironment{theorem}[1][]{\begin{mytheorem}[#1]\begin{mythmenv}}{\end{mythmenv}\end{mytheorem}}
\newenvironment{definition}[1][]{\begin{mydefinition}[#1]\begin{mythmenv}}{\end{mythmenv}\end{mydefinition}}
\newenvironment{corollary}[1][]{\begin{mycorollary}[#1]\begin{mythmenv}}{\end{mythmenv}\end{mycorollary}}
\newenvironment{proposition}[1][]{\begin{myproposition}[#1]\begin{mythmenv}}{\end{mythmenv}\end{myproposition}}
\newenvironment{lemma}[1][]{\begin{mylemma}[#1]\begin{mythmenv}}{\end{mythmenv}\end{mylemma}}
\newenvironment{remark}[1][]{\begin{myremark}[#1]\begin{mythmenv}}{\end{mythmenv}\end{myremark}}
\newenvironment{example}[1][]{\begin{myexample}[#1]\begin{mythmenv}}{\end{mythmenv}\end{myexample}}
\renewenvironment{proof}[1][Proof]{\noindent #1. \begin{mythmenv}}{\hfill$\square$\end{mythmenv}\medskip}
\def\mytitle{}
\def\zmptitle{
  \begin{tabular}{cc}
    \begin{minipage}[c]{0.4\textwidth}
      \begin{flushleft}
        \includegraphics[width=110pt]{../../tex/zmp}
      \end{flushleft}  
    \end{minipage}&
    \begin{minipage}[c]{0.55\textwidth}
      \begin{flushright}
      {\small\sf\@preprint}
      \end{flushright}
    \end{minipage}
  \end{tabular}
  \vskip 2cm}
\def\maketitle{
  \setlength{\parskip}{\myparskip}  
  \newpage
  \noindent
  \mytitle
  \begin{center}
    \LARGE\@title\\
    \if!\@subtitle!\else \smallskip\LARGE\@subtitle\\\fi
    \bigskip
    \if!\@author!\else\bigskip\large\@author\\\fi
    \ifnum\value{denseversion}=0
      \if!\@adress!\else     \bigskip\normalsize\@adress\\\fi
      \if!\@email!\else\ifnum\value{authorcounter}=1\bigskip\normalsize\textit{\@email}\\\else\fi\fi
    \else
    \fi
    \if!\@dedication!\else \bigskip\normalsize{\@dedication}\\\fi
  \end{center}
  \ifnum\value{denseversion}=0\vskip 1.5cm\else\vskip0.5cm\fi
  \thispagestyle{empty}}
\def\kobiburl#1{
   \IfBeginWith
     {#1}
     {http://arxiv.org/abs/}
     {\kobibarxiv{#1}}
     {\kobiblink{#1}}}
\def\kobibarxiv#1{\href{#1}{\texttt{[arxiv:\StrGobbleLeft{#1}{21}]}}}
\def\kobiblink#1{Available as: \href{#1}{\texttt{\StrSubstitute{#1}{_}{\underline{\;\;}}}}}
\def\kobib#1{
  \begin{raggedright}
  \ifnum\value{denseversion}=0\else\small\fi

  \end{raggedright}
  \ifnum\value{denseversion}=0\else
      \noindent
      \if!\@authorstringc!\else
        \ifnum\authortagsused=0\ifnum\value{authorcounter}>1\normalsize\@authorstringc\\[\medskipamount]\else\fi\else\normalsize\@authorstringc\\[\medskipamount]\fi
      \fi
      \if!\@adress!\else\normalsize\@adress\\{}\fi
      \ifnum\authortagsused=0
        \ifnum\value{authorcounter}=1
          \if!\@email!\else\linebreak\normalsize\textit{\@email}\\{}\fi
        \else
        \fi
      \else
      \fi
  \fi
  }
\newenvironment{commentfigure}{}
\newenvironment{sidewayscommentfigure}{\begin{minipage}}{\end{minipage}}
\def\showcomments{ -- Comments suppressed}
\newif\if@fewtab\@fewtabtrue{
  \count255=\time\divide\count255 by 60
  \xdef\hourmin{\number\count255}
  \multiply\count255 by-60\advance\count255 by\time
  \xdef\hourmin{\hourmin:\ifnum\count255<10 0\fi\the\count255}}
\def\ps@draft{
  \let\@mkboth\@gobbletwo
  \def\@oddfoot{
    \hbox to 7 cm{\tiny \versionno\hfil}
    \hskip -7cm\hfil\rm\thepage\hfil{\tiny\draftdate}}
  \def\@oddhead{}
  \def\@evenhead{}
  \let\@evenfoot\@oddfoot}
\def\draftdate{\number\month/\number\day/\number\year\ \ \ \hourmin }
\newcommand\version[1]{
  \typeout{}\typeout{#1}\typeout{}
  \vskip-1.7cm \centerline{\fbox{{\normalsize\tt DRAFT -- #1 -- 
  \draftdate\showcomments}}} \vskip0.92cm}
\def\draft#1{
  \def\versionno{#1}
  \pagestyle{draft}\thispagestyle{draft}
  \gdef\@ntitle{\version\versionno \@title}
  \global\def\draftcontrol{1}}
\global\def\draftcontrol{0}
\def\C {\mathbb{C}}
\def\id{\mathrm{id}}
\def\trivlin{\mathbf{I}}
\def\quand{\quad\text{ and }\quad}
\def\subset{\subseteq}
\def\nobr{~\hspace{-0.26em}}
\def\maps{\nobr:\nobr}
\def\df{\nobr := \nobr}
\let\Oldin\in\renewcommand{\in}{\nobr\Oldin\nobr}
\let\Oldtimes\times\renewcommand{\times}{\nobr\Oldtimes}
\let\Oldotimes\otimes\renewcommand{\otimes}{\nobr\Oldotimes}
\newlength{\widthtmp}
\def\length#1{\settowidth{\widthtmp}{#1}\the\widthtmp}
\renewcommand{\varepsilon}{\epsilon}
\def\erf#1{(\ref{#1})}
\newlength{\myl}
\def\brackets#1{\IfStrEq{#1}{-}{}{(#1)}}
\def\subindex#1{\IfStrEq{#1}{-}{}{_{#1}}}
\newcommand{\alxy}[1]{\begin{aligned}\xymatrix{#1}\end{aligned}}
\newcommand{\alxydim}[2]{\begin{aligned}\xymatrix#1{#2}\end{aligned}}
\renewcommand{\to}{\nobr\!\xymatrix@R=0cm@C=1.4em{\ar[r] &}\nobr}
\renewcommand{\mapsto}{\!\xymatrix@R=0cm@C=1.4em{\ar@{|->}[r] &}\!}
\renewcommand{\Rightarrow}{\!\xymatrix@R=0cm@C=1.4em{\ar@{=>}[r] &}\!}
\renewcommand{\Leftarrow}{\!\xymatrix@R=0cm@C=1.4em{\ar@{<=}[r] &}\!}
\newcommand{\incl}{\!\xymatrix@R=0cm@C=1.4em{\ar@{^(->}[r] &}\!}
\renewcommand{\hookrightarrow}{\incl}
\renewcommand\Leftrightarrow{\!\xymatrix@R=0cm@C=1.4em{\ar@{<=>}[r] &}\!}
\def\quot#1{``#1''}
\def\tetraeder{tetrahedron\ }
\def\zickzack{zigzag\ }
\def\codescent{codescent 2-groupoid}
\def\unifier{unifier}
\def\pt{pseudonatural transformation}
\def\pe{pseudonatural equivalence}
\newcommand{\loctrivfunct}[3]{\mathrm{Triv}^{#3}_{\pi}(#1)}
\newcommand{\trans}[3]{\mathfrak{Des}_{\pi}^{#3}(#1)}
\newcommand{\ex}[1]{\mathrm{Ex}_{#1}}
\newcommand{\upp}[3]{\mathcal{P}^{#1}_{#2}(#3)}
\newcommand{\uppp}{p^{\pi}}
\newcommand{\holo}{\mathscr{F}}
\newcommand{\bigon}[5]{\alxydim{@C=1.2cm}{#1 \ar@/^1.5pc/[r]^{#3}="1" \ar@/_1.5pc/[r]_{#4}="2" \ar@{=>}"1";"2"|{#5} & #2}}
\newcommand{\quadrat}[9]{\alxydim{@C=1.2cm@R=1.2cm}{#1 \ar[r]^{#5} \ar[d]_{#6} & #2 \ar@{=>}[dl]|{#9} \ar[d]^{#7} \\ #3 \ar[r]_{#8} & #4}}
\newcommand{\dreieck}[7]{\alxydim{@R=1.2cm@C=0.3cm}{& #2  \ar[dr]^{#5} & \\ #1 \ar[ur]^{#4} \ar[rr]_{#6}="1" && #3 \ar@{=>}[ul];"1"|{#7}}
}
\newcommand{\kopfdreieck}[7]{\alxydim{@R=1.2cm@C=0.3cm}{#1 \ar[dr]_{#4} \ar[rr]^{#6}="1" && #3 \ar@{=>}"1";[dl]|{#1} \\ & #2  \ar[ur]_{#5} & }
}
\title{Local Theory for 2-Functors on Path 2-Groupoids}
\author[a]{Urs Schreiber}	
\email{urs.schreiber@gmail.com}
\author[b]{Konrad Waldorf}
\email{konrad.waldorf@mathematik.uni-regensburg.de}
\keywords{}
\begin{document}


\maketitle

\begin{abstract}
In this article we discuss local aspects of 2-functors defined  on the path 2-groupoid of a smooth manifold; in particular, local trivializations and  descent data. This is a contribution to a project that provides  an axiomatic formulation of connections on (possibly non-abelian) gerbes in terms of 2-functors. The main result of this paper establishes the first part of this formulation: we prove an equivalence between the globally defined 2-functors and their locally defined descent data. The second part  appears in a separate publication; there we prove equivalences between descent data, on one side, and various existing versions of gerbes with connection on the other side.   

\end{abstract}



\setsecnumdepth{1}

\tableofcontents

\setsecnumdepth{1}

\section{Introduction}

The present article is a contribution to the axiomatic formulation of connections on (possibly non-abelian) gerbes carried out by the authors in several articles \cite{schreiber3,schreiber5,schreiber2}. It  is based on 2-functors
\begin{equation*}
F \maps  \mathcal{P}_2(M)  \to T
\end{equation*}
defined on the path 2-groupoid of a smooth manifold $M$, with values in some \quot{target} 2-category $T$. The path 2-groupoid $\mathcal{P}_2(M)$ is a strict 2-groupoid with objects the points of $M$, 1-morphisms certain homotopy classes of paths, and 2-morphisms certain homotopy classes of homotopies between paths.  A typical example of a target 2-category is the 2-category of algebras (over some fixed field), bimodules, and  intertwiners. 
In that example, the algebra $F(x) \in T$ associated to  a point $x\in M$ is supposed to be the \emph{fibre} of the gerbe at $x$, the bimodule
\begin{equation*}
F(\gamma) \maps  F(x) \to F(y)
\end{equation*}
associated to a path $\gamma$ from $x$ to $y$ is supposed to be the \emph{parallel transport} of the connection on that gerbe \emph{along the path $\gamma$}, and the intertwiner
\begin{equation*}
\bigon{F(x)}{F(y)}{F(\gamma_1)}{F(\gamma_2)}{F(\Sigma)}
\end{equation*}
associated to a homotopy $\Sigma$ between paths $\gamma_1,\gamma_2$ from $x$ to $y$ is supposed to be the \emph{parallel transport} of the connection on that gerbe \emph{along the surface $\Sigma$}.

In this article we discuss local properties of 2-functors defined on path 2-groupoids. Our main objective is  to implement  the notions of \emph{structure  2-groupoids}, \emph{local trivializations}, and \emph{descent data} for a 2-functor $F$. Descent data plays a crucial role for specifying certain smoothness conditions for a 2-functor $F$.

In Section \ref{sec:loctrivdesc} we set up the basis for our discussion of local properties. In Section \ref{sec:loctriv}
 we introduce the notion of a \emph{local trivialization} of a 2-functor $F\maps \mathcal{P}_2(M) \to T$ (Definition \ref{sec2_def5}), which is central for this article. In order to say what a local trivializations is, we fix a strict 2-groupoid $\mathrm{Gr}$ and  a 2-functor $i \maps  \mathrm{Gr} \to T$. Basically, a 2-functor $F$ is locally $i$-trivializable if it factors locally through the 2-functor $i$. In more detail, we require that there exists a surjective submersion $\pi \maps Y \to M$, a strict 2-functor $\mathrm{triv} \maps  \mathcal{P}_2(Y) \to \mathrm{Gr}$, and a natural equivalence
\begin{equation}
\label{eq:nateq}
F \circ \pi_{*} \cong i \circ \mathrm{triv}\text{,}
\end{equation}
where $\pi_{*} \maps  \mathcal{P}_2(Y) \to \mathcal{P}_2(M)$ is the induced 2-functor on path 2-groupoids.   The 2-functor $i \maps  \mathrm{Gr} \to T$ plays the role of the typical fibre for $F$. Locally $i$-trivializable 2-functors form a 2-category $\mathrm{Funct}_{i}(\mathcal{P}_2(M),T)$.

In Section \ref{sec:descentdata} we introduce the notion of \emph{descent data} for locally trivialized 2-functors. The descent data of a local trivialization consists of the 2-functor $\mathrm{triv} \maps  \mathcal{P}_2(Y) \to \mathrm{Gr}$ and further coherence data related to the natural equivalence \erf{eq:nateq}. Descent data with respect to the structure 2-groupoid $i \maps  \mathrm{Gr} \to T$ forms a 2-category $\mathfrak{Des}^2(i)_M$. In Section \ref{sec:extract} we describe how to extract descent data from a given local trivialization of a 2-functor $F$.

Descent data plays a crucial role because it allows to impose \emph{smoothness conditions}. We shall briefly outline these conditions -- the full discussion is given in \cite{schreiber2}. We infer that the path 2-groupoid is a 2-groupoid internal to the category of \emph{diffeological spaces} \cite{schreiber5}. Diffeological spaces contain smooth manifolds as a full subcategory, but allow for many constructions which are obstructed for smooth manifolds. For example, the sets of 1-morphisms and 2-morphisms of the path 2-groupoid are quotients of subsets of mapping spaces between smooth manifolds, and all these operations lead to well-defined diffeological spaces. On the other side, however, the target 2-categories are typically not internal to the category of smooth manifolds or diffeological spaces, so that it is not possible to demand that $F$ is smooth. Instead, we will assume that the structure 2-groupoid $\mathrm{Gr}$ is a \emph{Lie 2-groupoid}, and impose smoothness conditions for the descent data of $F$, which is formulated with respect to $\mathrm{Gr}$.  For instance, the first  of these smoothness conditions is that the 2-functor $\mathrm{triv}\maps \mathcal{P}_2(Y) \to \mathrm{Gr}$  is smooth. There are further conditions related to the natural equivalence \erf{eq:nateq}; these will be described in detail in \cite{schreiber2}.

In Section \ref{sec:recon} we establish a procedure that \emph{reconstructs} a 2-functor from given descent data. This reconstruction procedure is, on a technical level, the main contribution of the present article. In order to explain some of the details, let $\mathfrak{Des}^2_{\pi}(i)\subset \mathfrak{Des}^2(i)_M$ denote the 2-category of descent data with respect to a fixed surjective submersion $\pi \maps Y \to M$. We introduce in Section \ref{sec2_1} the \emph{codescent 2-groupoid} $\mathcal{P}_2^{\pi}(M)$ whose idea is to combine the path 2-groupoid of $Y$ with additional \quot{vertical jumps} in the fibres of $\pi$.  The codescent 2-groupoid $\mathcal{P}^{\pi}_2(M)$ serves two purposes. Firstly, we prove in Section \ref{sec2_2} the existence of a 2-functor
\begin{equation*}
s \maps  \mathcal{P}_2(M) \to \mathcal{P}_2^{\pi}(M)
\end{equation*}
that consistently lifts points, paths, and homotopies between paths
from $M$ to $Y$, by compensating the differences between local lifts with the vertical jumps.  Secondly, we construct in Section \ref{sec2_3} a \quot{pairing 2-functor}
\begin{equation*}
R \maps  \mathfrak{Des}^2_{\pi}(i)  \to \mathrm{Funct}(\mathcal{P}_2^{\pi}(M),T)
\end{equation*}
expressing the result that the codescent 2-groupoid $\mathcal{P}_2^{\pi}(M)$ is \quot{$T$-dual} to the descent 2-category. The composition of a 2-functor in the image of $R$ with $s$ results in a locally $i$-trivializable 2-functor on $\mathcal{P}_2(M)$, and this defines the reconstruction of a 2-functor from descent data.

In Section \ref{sec:equivalence} we prove the main result of this article, namely that the correspondence between \emph{globally defined} locally $i$-trivializable 2-functors $F \maps  \mathcal{P}_2(M) \to T$ and \emph{locally defined} descent data is one-to-one. More precisely, we prove (Theorem \ref{th:main}) that extraction and reconstruction establish an equivalence
\begin{equation}
\label{eq:eq}
\mathfrak{Des}^2(i)_M \cong \mathrm{Funct}_{i}(\mathcal{P}_2(M),T)
\end{equation} 
between the 2-category $\mathfrak{Des}^2(i)_M$ of descent data with respect to the structure 2-groupoid $i \maps \mathrm{Gr} \to T$ and the 2-category $\mathrm{Funct}_{i}(\mathcal{P}_2(M),T)$ of locally $i$-trivializable 2-functors.

In order to explain the importance of this result let us return to the anticipated discussion of smoothness conditions that we impose on the descent data. Let us denote by $\mathfrak{Des}^2(i)^{\infty}_M$ the sub-2-category of $\mathfrak{Des}^2(i)_M$ that consists of \emph{smooth descent data}. The equivalence \erf{eq:eq} from the main result of the present article restricts to an equivalence between $\mathfrak{Des}^2(i)^{\infty}_M$ and a sub-2-category of $\mathrm{Funct}_{i}(\mathcal{P}_2(M),T)$, called the 2-category of \emph{transport 2-functors on $M$ with $\mathrm{Gr}$-structure}. These transport 2-functors constitute our axiomatic formulation of connections on gerbes. In this interpretation, our main theorem  implies an equivalence between transport 2-functors and local, smooth, differential-geometric data.

We have included an appendix containing a brief summary of the notions and conventions from higher category theory that we use.

\paragraph{Acknowledgements.}

We would like to thank John Baez for many discussions and suggestions. We thank the Hausdorff Research Institute for Mathematics in Bonn for kind hospitality and support.

\setsecnumdepth{2}

\section{Locally Trivial 2-Functors and their Descent Data}

\label{sec:loctrivdesc}

In this section we introduce the central notions of the local theory of 2-functors.

\subsection{Local Trivializations}

\label{sec:loctriv}

Let $M$ be a smooth manifold. For points $x,y\in M$, a \emph{path} $\gamma \maps x \to y$ is a smooth map $\gamma \maps [0,1] \to M$ with $\gamma(0)=x$ and $\gamma(1)=y$.  We require paths to have \quot{sitting instants}, i.e. to be locally constant around $\left \lbrace 0,1 \right \rbrace$. A \emph{bigon} $\Sigma \maps  \gamma \Rightarrow \gamma'$ between two paths $\gamma,\gamma' \maps x \to y$ is a smooth fixed-ends homotopy from $\gamma$ to $\gamma'$ with sitting instants at $\gamma$ and $\gamma'$.
The \emph{path 2-groupoid} $\mathcal{P}_2(M)$ \cite{schreiber5} of a smooth manifold $M$  is a strict 2-groupoid with
\begin{enumerate}[(i)]
\item 
objects: the points of $M$.

\item
1-morphisms: rank-one homotopy classes of  paths.

\item
2-morphisms: rank-two homotopy classes of bigons.
\end{enumerate}
The process of taking classes by homotopies of certain rank is explained in \cite{schreiber5}. For the purpose of this article, it suffices to accept that these assure the existence of strict, associative compositions and of strict inverses. For  definitions and conventions related to 2-categories we refer to Appendix \ref{app1}. 
If $f \maps M \to N$ is a smooth map, we get a 2-functor $f_{*} \maps \mathcal{P}_2(M) \to \mathcal{P}_2(N)$. For composable smooth maps $f$ and $g \maps N \to O$ we get
\begin{equation}
\label{eq:pullbackcomp}
(g \circ f)_{*} = g_{*} \circ f_{*}\text{.}
\end{equation}

In this article, we study  2-functors
\begin{equation}
\label{eq:2functors}
F \maps  \mathcal{P}_2(M) \to T
\end{equation}
for $T$ some 2-category called  \emph{target 2-category}. The 2-category of 2-functors \erf{eq:2functors} is denoted $\mathrm{Funct}(\mathcal{P}_2(M),T)$. If $f \maps M \to N$ is a smooth map, the composition of a 2-functor $F \maps  \mathcal{P}_2(N) \to T$ with the 2-functor $f_{*} \maps  \mathcal{P}_2(M) \to \mathcal{P}_2(N)$ is denoted by
\begin{equation*}
f^{*}F := F \circ f_{*} \maps  \mathcal{P}_2(M) \to T\text{.}
\end{equation*}
Local trivializations of a 2-functor $F \maps  \mathcal{P}_2(M) \to T$ 
have three attributes:
\begin{enumerate}[(i)]
\item
A  strict 
2-groupoid $\mathrm{Gr}$, the \emph{structure 2-groupoid}. 

\item
A   2-functor $i \maps \mathrm{Gr} \to T$ that indicates how the structure 2-groupoid is realized in the target 2-category. 

\item 
A surjective
submersion $\pi \maps Y \to M$ implementing locality.

\end{enumerate}
For a surjective submersion $\pi \maps Y \to M$ the fibre products $Y^{[k]} \df Y \times_M ... \times_M Y$ are again smooth manifolds, and the canonical projections
$\pi_{i_1...i_p} \maps Y^{[k]} \to Y^{[p]}$
to the indexed factors are smooth maps. 

\begin{definition}
\label{sec2_def5}
A \emph{$\pi$-local $i$-trivialization} of a 2-functor 
\begin{equation*}
F \maps  \mathcal{P}_2(M)
\to T
\end{equation*}
is a pair $(\mathrm{triv},t)$ of a strict 2-functor $\mathrm{triv} \maps  \mathcal{P}_2(Y) \to \mathrm{Gr}$ and of
a \pe
\begin{equation*}
\quadrat{\mathcal{P}_2(Y)}{\mathcal{P}_2(M)}{\mathrm{Gr}}{T\text{.}}{\pi_{*}}{\mathrm{triv}}{F}{i}{t}
\end{equation*}
\end{definition}

In other words, a 2-functor $F$ is locally trivializable, if its pullback $\pi^{*}F$ to the covering space factorizes -- up to  \pe\   -- through the structure 2-groupoid $\mathrm{Gr}$. A 2-functor is called \emph{$i$-trivializable}, if it has a $\id_M$-local $i$-trivialization.  A 2-functor $\mathrm{triv} \maps  \mathcal{P}_2(M) \to \mathrm{Gr}$ is called $i$-trivial, in which case we write $\mathrm{triv}_i  \df  i \circ \mathrm{triv}$  in order to abbreviate the notation. We also remark that by \quot{pseudonatural equivalence} we mean a pseudonatural transformation \emph{together} with  a weak inverse and  two invertible modifications expressing the invertibility (see Appendix \ref{app1}).

We define 
a 2-category $\loctrivfunct{i}{\mathrm{Gr}}{2}$ of  2-functors with  $\pi$-local $i$-trivialization: an object is a triple $(F,\mathrm{triv},t)$ of a 2-functor $F \maps \mathcal{P}_2(M) \to T$ together with a  $\pi$-local $i$-trivialization $(\mathrm{triv},t)$. A 1-morphism
\begin{equation*}
(F,\mathrm{triv},t) \to (F',\mathrm{triv}',t')
\end{equation*}
is just a \pt\ $F \to F'$ between the two 2-functors (ignoring the trivialization), and a 2-morphism is just a modification between those.

\subsection{Descent Data}

\label{sec:des}
\label{sec:descentdata}

Let $i \maps  \mathrm{Gr} \to T$ be a 2-functor from a strict 2-groupoid $\mathrm{Gr}$ to a 2-category $T$, and let $\pi \maps Y \to M$ be a surjective submersion. 
In the following three definitions, we define a 2-category $\trans{i}{\pi}{2}$ of \emph{descent data}. 

\begin{definition}
\label{def:descobj}
A \uline{descent object}  is a  tuple $(\mathrm{triv},g,\psi,f)$  consisting of 
\begin{enumerate}[(i)]
\item 
a strict  2-functor
$\mathrm{triv} \maps \mathcal{P}_2(Y) \to \mathrm{Gr}$
\item
a \pe\
$g \maps  \pi_1^{*}\mathrm{triv}_i \to \pi_2^{*}\mathrm{triv}_i$

\item
an invertible modification $\psi \maps  \id_{\mathrm{triv}_i} \Rightarrow \Delta^{*}g$

\item
an invertible modification
$f \maps \pi_{23}^{*}g \circ \pi_{12}^{*}g \Rightarrow \pi_{13}^{*}g$
\end{enumerate}
such that the diagrams  
\begin{equation}
\label{2}
\alxydim{@R=1.1cm@C=0.6cm}{\id_{\pi_2^{*}\mathrm{triv}_i} \circ g \ar@{=>}[rr]^{\pi_2^{*}\psi \circ \id} \ar@{=>}[dr]_{r} && \Delta_{22}^{*}g \circ g\ar@{=>}[dl]^{\Delta_{122}^{*}f} \\ &g&}
\quad\text{, }\quad
\alxydim{@R=1.1cm@C=0.6cm}{g \circ \id_{\pi_1^{*}\mathrm{triv}_i} \ar@{=>}[rr]^{\id \circ \pi_1^{*}\psi} \ar@{=>}[dr]_{l} && g \circ \Delta_{11}^{*}g\ar@{=>}[dl]^{\Delta_{112}^{*}f} \\ &g&}
\end{equation}
and 
\begin{equation}
\label{3}
\alxydim{@C=-0.8cm@R=1.3cm}{&&& (\pi_{34}^{*}g \circ \pi_{23}^{*}g) \circ \pi_{12}^{*}g \ar@{=>}[dlll]_{a} \ar@{=>}[drrr]^{\pi_{234}^{*}f \circ \id} &&& \\ \pi_{34}^{*}g \circ (\pi_{23}^{*}g \circ \pi_{12}^{*}g) \ar@{=>}[drr]_{\id \circ \pi_{123}^{*}f} &&&&&& \hspace{1.7cm}\pi_{24}^{*}g \circ \pi_{12}^{*}g\hspace{1.7cm} \ar@{=>}[dll]^{\pi_{124}^{*}f} \\ && \pi_{34}^{*}g \circ \pi_{13}^{*}g \ar@{=>}[rr]_-{\id \circ \pi_{134}^{*}f} && \pi_{14}^{*}g\text{.} &&}\hspace{-2cm}
\end{equation}
are commutative. A descent object $(\mathrm{triv},g,\psi,f)$ is called \emph{normalized}, if the conditions
\begin{equation*}
\id_{\mathrm{triv}_i} = \Delta^{*}g
\quand
g \circ \Delta_{21}^{*}g= \id_{\pi_1^{*}\mathrm{triv}_i}
\end{equation*}
and 
\begin{equation*}
\psi = \id_{\Delta^{*}g}
\quand
\Delta_{121}^{*}f = \id_{\Delta_{11}^{*}g}
\end{equation*}
hold. 
\end{definition}
In these diagrams, $r$, $l$ and $a$ are the right and left \unifier s and the associator of the 2-category $T$. Further is $\Delta \maps Y \to Y^{[2]}$  the diagonal map,  $\Delta_{112},\Delta_{122} \maps Y^{[2]} \to Y^{[3]}$  are the maps that duplicate the first and the second factor, respectively, $\Delta_{jj} := \Delta \circ \pi_j$, and $\Delta_{21}:Y^{[2]} \to Y^{[2]}$ exchanges the two components. Normalized descent objects play an important role in the discussion of surface holonomy; see Lemma \ref{reconinversionstrict} and  \cite[Section 5]{schreiber2}.

\begin{definition}
Let $(\mathrm{triv},g,\psi,f)$ and $(\mathrm{triv}',g',\psi',f')$ be descent objects. A \emph{descent 1-morphism} $(\mathrm{triv},g,\psi ,f) \to (\mathrm{triv}',g',\psi ',f')$
  is a pair $(h,\varepsilon)$  of a 
  \pt
\begin{equation*}
h \maps  \mathrm{triv}_i \to \mathrm{triv}_i'
\end{equation*}
and  an invertible    modification
\begin{equation*}
\epsilon \maps  \pi_2^{*}h \circ g \Rightarrow g' \circ \pi_1^{*}h
\end{equation*}
such that the diagrams
\begin{equation}
\label{5}
\alxydim{@C=2cm}{\pi_{23}^{*}g' \circ (\pi_{2}^{*}h \circ \pi_{12}^{*}g) \ar@{=>}[r]^{a} \ar@{=>}[d]_{\id \circ \pi_{12}^{*}\epsilon} & (\pi_{23}^{*}g' \circ \pi_{2}^{*}h) \circ \pi_{12}^{*}g \ar@{=>}[d]^{\pi_{23}^{*}\epsilon^{-1} \circ \id}   \ar@{=>}[d]\\\pi_{23}^{*}g' \circ (\pi_{12}^{*}g' \circ \pi_1^{*}h)  \ar@{=>}[d]_{a^{-1}} & (\pi_{3}^{*}h \circ \pi_{23}^{*}g) \circ \pi_{12}^{*}g \ar@{=>}[d]^{a}   \ar@{=>}[d] \\(\pi_{23}^{*}g' \circ \pi_{12}^{*}g') \circ \pi_1^{*}h\ar@{=>}[d]_{f' \circ \id} &\pi_{3}^{*}h \circ (\pi_{23}^{*}g \circ \pi_{12}^{*}g)\ar@{=>}[d]^{\id\circ f} \\ \pi_{13}^{*}g'\circ \pi_1^{*}h \ar@{=>}[r]_{\pi_{13}^{*}\epsilon} & \pi_{3}^{*}h \circ \pi_{13}^{*}g\text{.}}
\end{equation}
and
\begin{equation}
\label{4}
\alxydim{}{\id_{\mathrm{triv}'_i} \circ h \ar@{=>}[r]^-{l_{h}} \ar@{=>}[d]_{\psi' \circ \id_h} & h \ar@{=>}[r]^-{r_{h}^{-1}} & h \circ \id_{\mathrm{triv}_i} \ar@{=>}[d]^{\id_h \circ \psi} \\ \Delta^{*}g' \circ h \ar@{=>}[rr]_{\Delta^{*}\epsilon} && h \circ \Delta^{*}g\text{.}}
\end{equation}
are commutative.
\end{definition}

Finally, we introduce

\begin{definition}
Let $(h_1,\varepsilon_1)$ and $(h_2,\varepsilon_2)$ be descent 1-morphisms from a descent object $(\mathrm{triv},g,\psi,f)$ to another descent object $(\mathrm{triv}',g',\psi',f')$. A \emph{descent 2-morphism} $(h_1,\varepsilon_1) \Rightarrow (h_2,\varepsilon_2)$ is a modification 
\begin{equation*}
E \maps  h_1 \Rightarrow h_2
\end{equation*}
such that the diagram
\begin{equation}
\label{6}
\alxydim{@=1.2cm}{g' \circ \pi_1^{*}h_1 \ar@{=>}[d]_{\id \circ \pi_1^{*}E}  \ar@{=>}[r]^{\varepsilon_{1}} & \pi_2^{*}h_1 \circ g \ar@{=>}[d]^{\pi_2^{*}E \circ \id } \\ g' \circ \pi_1^{*}h_2 \ar@{=>}[r]_{\varepsilon_2} & \pi_2^{*}h_2 \circ g\text{.}}
\end{equation}
is commutative.
\end{definition}

Descent objects, 1-morphisms and 2-morphisms form a  2-category $\trans{i}{\pi}{2}$ in an evident way. 
We remark that this  2-category comes with a strict 2-functor 
\begin{equation*}
V \maps  \trans{i}{\mathrm{Gr}}{2} \to \mathrm{Funct}(\mathcal{P}_2(Y), T)\text{.}
\end{equation*}
From a descent object $(\mathrm{triv},g,\psi,f)$ it keeps only the 2-functor $\mathrm{triv}$ and from a descent 1-morphism $(h,\varepsilon)$ only the \pt\ $h$. 

\begin{example}
Let us briefly consider the  2-category $\trans{i}{2}{2}$ for the particular case that the manifolds $M$ and $Y$ are just points. Let $\mathfrak{C}$ be a tensor category, and let $\mathcal{B}\mathfrak{C}$ be the 2-category with one object associated to $\mathfrak{C}$, see Example \ref{app2cat_ex2}.  Let $\mathrm{Gr}$ be the trivial 2-groupoid (one object, one 1-morphism and one 2-morphism), and let $i \maps  \mathrm{Gr} \to \mathcal{B}\mathfrak{C}$ be the  2-functor that sends the unique 1-morphism to the tensor unit in $\mathfrak{C}$. Then, a descent object is precisely  a \emph{special symmetric Frobenius algebra} object in $\mathfrak{C}$. 
\end{example}

\subsection{Extraction of Descent Data}

\label{sec1_3}
\label{sec:extract}

We have so far introduced a 2-category $\loctrivfunct{i}{\mathrm{Gr}}{2}$ of 2-functors with $\pi$-local $i$-trivializations and a  2-category $\trans{i}{\mathrm{Gr}}{2}$ of descent data, both associated to a surjective submersion $\pi$ and a 2-functor $i \maps \mathrm{Gr} \to T$. Now we define a 2-functor
\begin{equation*}
\ex{\pi} \maps \loctrivfunct{i}{\mathrm{Gr}}{2} \to \trans{i}{\mathrm{Gr}}{2}
\end{equation*}
between these 2-categories.
This 2-functor \emph{extracts} descent data from  2-functors with local trivializations.

Let $F \maps \mathcal{P}_2(M) \to
T$ be a  2-functor with a $\pi$-local $i$-trivialization $(\mathrm{triv},t)$. We recall that by our conventions the pseudonatural equivalence $t$ comes with a weak inverse $\bar t \maps \mathrm{triv}_i
\to \pi^{*}F$ and with invertible  modifications
\begin{equation}
\label{7}
i_t :  \bar t \circ t \Rightarrow \id_{\pi^{*}F}
\quad\text{ and }\quad
j_t :  \id_{\mathrm{triv}_i} \Rightarrow t \circ \bar t
\end{equation}
satisfying the identities (\ref{app2cat_1}). We define a \pe 
\begin{equation*}
g: \pi_1^{*}\mathrm{triv_i} \to \pi_2^{*}\mathrm{triv}_i
\end{equation*}
as the composition 
$g  \df  \pi_2^{*}t \circ \pi_1^{*}\bar t$ of pseudonatural equivalences. This composition is well-defined since $\pi_1^{*}\pi^{*}F=\pi_2^{*}\pi^{*}F$.   We obtain $\Delta^{*}g= t \circ \bar t$, so that the definition $\psi \df  j_t$ yields an invertible modification
\begin{equation*}
\psi \maps \id_{\mathrm{triv}_i} \Rightarrow \Delta^{*}g\text{.} 
\end{equation*}
Finally, we define an invertible modification 
\begin{equation*}
f \maps \pi_{23}^{*}g \circ \pi_{12}^{*}g \Rightarrow \pi_{13}^{*}g
\end{equation*}
as the composition
\begin{equation*}
\alxydim{@C=0.8cm}{(\pi_3^{*}t \circ \pi_2^{*}\bar t) \circ (\pi_2^{*}t \circ \pi_1^{*}\bar t) \ar@{=>}[r] & \pi_3^{*}t \circ (( \pi_2^{*}\bar t \circ \pi_2^{*}t )\circ \pi_1^{*}\bar t) \ar@{=>}[d]_{\id \circ (\pi_2^{*}i_t \circ \id)} \\ &\pi_3^{*}t \circ (\id_{\pi^{*}F} \circ \pi_1^{*}\bar t) \ar@{=>}[r]_-{\id\circ r_{\pi_1^{*}\bar t}}  &\pi_3^{*}t \circ \pi_1^{*}\bar t }
\end{equation*}
where  $r$ is the right \unifier\ of $\mathrm{Funct}(\mathcal{P}_2(Y^{[2]}),T)$, and the first arrow summarizes two obvious occurrences of  associators.

\begin{lemma}
\label{lem5}
The modifications $\psi$ and $f$  make the diagrams (\ref{2}) and (\ref{3}) commutative,
so that 
\begin{equation*}
\ex{\pi}(F,\mathrm{triv},t) \df (\mathrm{triv},g,\psi ,f)
\end{equation*}
 is a descent object.
\end{lemma}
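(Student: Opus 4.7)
The plan is to verify the three coherence diagrams componentwise in the target 2-category $T$, using only two ingredients: (a) the zig-zag (triangle) identities \erf{app2cat_1} for the adjoint equivalence data $(t,\bar t, i_t, j_t)$ underlying the pseudonatural equivalence $t$, and (b) the bicategorical coherence axioms (pentagon and unifier triangle) of $T$ together with the interchange law.

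For the two triangles in \erf{2}, I would first collapse the pullbacks along $\Delta$. Since $\pi_j \circ \Delta = \id_Y$ for $j=1,2$, the composition $\Delta^{*}g$ equals $t \circ \bar t$, so that $\psi = j_t$ is literally the unit-like modification of the adjoint equivalence. Likewise $\Delta_{122}^{*}f$, after unpacking the definition of $f$ and using that $\Delta_{122}$ identifies the first two factors, reduces (modulo associators) to the modification $\id_{\pi_2^{*}t} \circ (i_t \circ \id_{\pi_1^{*}\bar t})$ followed by a right unifier. The first triangle in \erf{2} therefore becomes a whiskered instance of the zig-zag identity $(i_t \circ \id_t)\cdot(\id_t \circ j_t)= r_t \cdot l_t^{-1}$, which holds by \erf{app2cat_1}. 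The second triangle follows identically from the dual zig-zag identity for $\bar t$.

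For the pentagon \erf{3}, the structure is the same but larger. Each edge labelled by $f$ is, after pullback, a copy of the same procedure cancelling one $\bar t \circ t$ pair via $i_t$ at a middle index. Going around the hexagon, both the clockwise and counter-clockwise composites start with $\pi_{34}^{*}g \circ \pi_{23}^{*}g \circ \pi_{12}^{*}g = \pi_4^{*}t \circ \pi_3^{*}\bar t \circ \pi_3^{*}t \circ \pi_2^{*}\bar t \circ \pi_2^{*}t \circ \pi_1^{*}\bar t$ and end with $\pi_{14}^{*}g = \pi_4^{*}t \circ \pi_1^{*}\bar t$, with the two composites each performing the two cancellations $\pi_3^{*}i_t$ and $\pi_2^{*}i_t$ of the interior pairs $\pi_3^{*}\bar t \circ \pi_3^{*}t$ and $\pi_2^{*}\bar t \circ \pi_2^{*}t$, but in opposite order and with different intermediate bracketings.

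I would normalize all bracketings (say, to fully left-associated) via the pentagon axiom of $T$, reducing the problem to comparing two ways of applying $\pi_2^{*}i_t$ and $\pi_3^{*}i_t$ as whiskered 2-cells. Because these two modifications act on disjoint 1-cell regions (one between positions $2$--$3$, the other between positions $4$--$5$ of the six-fold composite), they commute by the interchange law, so the pentagon closes. The main obstacle is purely bookkeeping: writing out all six edges as modifications in $\mathrm{Funct}(\mathcal{P}_2(Y^{[4]}),T)$ and tracking the associator re-bracketings. The cleanest way to present it is to fix one bracketing, rewrite each occurrence of $f$ in that bracketing so that only the $i_t$ whiskers remain as non-coherence content, and then invoke interchange.
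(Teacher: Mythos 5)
Your proposal is correct and follows essentially the same route as the paper: the paper verifies the first triangle of (\ref{2}) by pasting together subdiagrams governed by the pentagon axiom, naturality of the associator, coherence, axiom (C2), and the (whiskered) identities (\ref{app2cat_1}) for $i_t$ and $j_t$, and then asserts that the second triangle and diagram (\ref{3}) follow in the same way. Your identification of the whiskered zig-zag identities as the only non-coherence content of (\ref{2}), and of the interchange law applied to the two disjoint $i_t$-cancellations as the only non-coherence content of (\ref{3}), is exactly the substance of that pasting argument.
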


\begin{proof}
We prove the commutativity of the diagram on the left hand side of (\ref{2}) by patching it together from commutative diagrams:
\begin{equation*}
\alxydim{@C=-0.05cm@R=0.6cm}{\id_{\pi_2^{*}\mathrm{triv}_i} \circ (\pi_2^{*}t \circ \pi_1^{*}\bar t)\ar@{}[dddrrrrr]|>>>>>>>>>>>>>>>>*+[F-:<8pt>]{C}\ar@{}[drrrrrrr]|*+[F-:<8pt>]{B} \ar@{=>}[ddddrrrrr]_{r} \ar@{=>}[drrrr]|-{a^{-1}} \ar@{=>}[rrrrrr]|-{j_t \circ (\id \circ \id)} &&&&&& \ast \ar@{}[ddrr]|*+[F-:<8pt>]{A} \ar@{=>}[d]|{a^{-1}} \ar@{=>}[rrr]|-
{a} &\hspace{2cm}&& \pi_2^{*}t \circ (\pi_2^{*}\bar t \circ (\pi_2^{*}t \circ \pi_1^{*}\bar t)) \ar@{=>}[ddl]|{\id \circ a^{-1}}  
\\
&&&& \ast \ar@{}[ddrr]|*+[F-:<8pt>]{D}\ar@{=>}[dddr]|{r \circ \id}  \ar@{=>}[rr]|{(j_t \circ\id) \circ \id} && \ast \ar@{=>}[d]|{a \circ \id} & 
\\
&&&&&&\ast \ar@{=>}[d]|{(\id \circ i_t)\circ \id} \ar@{=>}[rr]|{a}="1" \ar@{}"1";[dr]|*+[F-:<8pt>]{B} && \ast \ar@{=>}[dl]^{\id\circ(i_t \circ \id)}
\\
&&&&&&\ast \ar@{=>}[dl]|-{l\circ \id} \ar@{=>}[r]|{a} & \ast \ar@{=>}[dll]^{\id\circ r}
\\
&&&&&\pi_2^{*}t \circ \pi_1^{*}\bar t}
\end{equation*}
The six subdiagrams are commutative: A is the Pentagon axiom (C4) of $T$, B's are the naturality of the associator, C and D are diagrams that follow from the coherence theorem for the 2-category $T$, and the remaining small triangle is axiom (C2). The commutativity of the second diagram in (\ref{2}) and the one of diagram \erf{3} can be shown in the same way.
\end{proof}

Now let $A \maps F \to F'$ be a \pt\
between two 2-functors with $\pi$-local $i$-trivializations
$t \maps  \pi^{*}F \to \mathrm{triv}_i$ and $t' \maps  \pi^{*}F'
\to \mathrm{triv}'_i$. Let $i_t,j_t$ and $i_{t'},j_{t'}$ be the modifications (\ref{7}) we have chosen for the   weak inverses $\bar t$ and $\bar t'$.
We  define a \pt
\begin{equation*}
h \maps  \mathrm{triv}_i \to \mathrm{triv}'_i
\end{equation*}
by
$h  \df  (t' \circ \pi^{*}A) \circ \bar t$, and  an invertible modification $\varepsilon$
by
\begin{equation*}
\alxydim{@R=0.8cm}{\pi_2^{*}h \circ g \ar@{=>}[r] & (\pi_2^{*}t' \circ \pi_2^{*}\pi^{*}A) \circ ((\pi_2^{*} \bar t  \circ\pi_2^{*}t )\circ \pi_1^{*}\bar t) \ar@{=>}[d]^{(\pi_2^{*}l_{t'}^{-1}\circ \id) \circ (\pi_2^{*}i_t \circ \id)} \\ &((\pi_2^{*}t' \circ \id )\circ \pi_2^{*}\pi^{*}A) \circ (\id\circ \pi_1^{*}\bar t) \ar@{=>}[d]^{((\id \circ \pi_1^{*}i_{t'}^{-1}) \circ \id) \circ \pi_1^{*}r_t} \\ & ((\pi_2^{*}t' \circ (\pi_1^{*}\bar t' \circ \pi_1^{*}t') )\circ \pi_1^{*}\pi^{*}A) \circ \pi_1^{*}\bar t  \ar@{=>}[r] & g' \circ \pi_1^{*}h\text{.} }
\end{equation*}

Here, the unlabelled arrows summarize the definitions of $h$ and $g$ and several obvious occurrences of associators. Arguments similar to those given in the proof of Lemma \ref{lem5} show the following lemma.

\begin{lemma}
The modification $\varepsilon$  makes the diagrams (\ref{5}) and (\ref{4}) commutative, so that $\ex{\pi}(A) \df (h,\varepsilon)$ is  a descent 1-morphism
\begin{equation*}
\ex{\pi}(A)
 \maps  \ex{\pi}(F) \to \ex{\pi}(F').
\end{equation*}
\end{lemma}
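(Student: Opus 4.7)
The plan is to verify each of the two coherence diagrams for $(h,\varepsilon)$ by the same diagram-patching strategy used in the proof of Lemma \ref{lem5}. In both cases I would first unfold the definitions $g = \pi_2^{*}t \circ \pi_1^{*}\bar t$, $g' = \pi_2^{*}t' \circ \pi_1^{*}\bar t'$, $h = (t' \circ \pi^{*}A) \circ \bar t$, and the explicit formula for $\varepsilon$, then decompose the resulting diagram into a grid whose cells are each either an instance of the pentagon axiom, a naturality square of an associator or a \unifier, one of the \unifier\ axioms of $T$, or one of the biadjoint-equivalence identities (\ref{app2cat_1}) satisfied by the inverse data $(\bar t, i_t, j_t)$ and $(\bar t', i_{t'}, j_{t'})$. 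Subcells involving only associators and \unifier s are absorbed by bicategory coherence, exactly as are subdiagrams C and D in the proof of Lemma \ref{lem5}.

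For diagram \erf{5} the key observation is that, after substitution, each of the three occurrences $\pi_{ij}^{*}\varepsilon$ contributes exactly one factor $\pi_j^{*}i_t$ and one factor $\pi_i^{*}i_{t'}^{-1}$, while the modifications $f$ and $f'$ constructed in Lemma \ref{lem5} contribute the remaining occurrences of $i_t$ and $i_{t'}$. The four $i_t$-cells can then be reorganized by the same A--D pattern used in the proof of Lemma \ref{lem5}, and the four $i_{t'}$-cells by its mirror; what remains is a purely coherent associator/\unifier\ diagram in $\mathrm{Funct}(\mathcal{P}_2(Y^{[3]}),T)$. For diagram \erf{4} I would substitute $\psi = j_t$ and $\psi' = j_{t'}$ and then apply the triangle identities relating $i_t$ to $j_t$ (and their primed analogues) until all modification data cancels, leaving again a unitor/associator diagram that commutes by coherence.

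The main obstacle is purely bookkeeping: once $\varepsilon$, $f$, and $f'$ are fully expanded, diagram \erf{5} becomes a large composite of $2$-morphisms with many nested associators, and one must choose the decomposition carefully enough that every cell is manifestly of one of the types listed above. No conceptually new ingredient beyond what already appears in the proof of Lemma \ref{lem5} is required; in particular, the pseudonaturality axioms for $A$ enter only in the trivial form of compatibility of $\pi^{*}A$ with further pullback along the projections $\pi_1,\pi_2,\pi_3$, since every $1$-cell of $\mathcal{P}_2(Y^{[k]})$ on which $A$ is evaluated arises as a pullback from $Y$.
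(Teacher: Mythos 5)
Your proposal is correct and is essentially the paper's own argument: the paper gives no details here beyond asserting that \quot{arguments similar to those given in the proof of Lemma \ref{lem5}} apply, and your strategy of expanding $g$, $g'$, $h$, $\varepsilon$ and patching the two diagrams together from pentagon cells, naturality squares, coherence cells, and the identities \erf{app2cat_1} is exactly the intended elaboration. Your bookkeeping is also accurate: each $\pi_{ij}^{*}\varepsilon$ contributes one $\pi_j^{*}i_t$ and one $\pi_i^{*}i_{t'}^{-1}$, diagram \erf{4} reduces to the zigzag identities after substituting $\psi=j_t$ and $\psi'=j_{t'}$, and the pseudonaturality of $A$ indeed enters only through whiskering of $\pi^{*}A$.
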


In order to continue the definition of the 2-functor $\ex{\pi}$  we consider a modification $B \maps A_1 \Rightarrow A_2$
between \pt s $A_1,A_2 \maps F \to F'$ of  2-functors with $\pi$-local $i$-trivializations $t \maps  \pi^{*}F \to \mathrm{triv}_i$
and $t' \maps \pi^{*}F' \to \mathrm{triv}'_i$. Let $(h_k,\varepsilon_k)  \df  \ex{\pi}(A_k)$ be the associated descent 1-morphisms for $k=1,2$. We define a modification $E \maps  h_1 \Rightarrow h_2$
by
\begin{equation*}
\alxydim{@C=2cm}{h_1= (t' \circ \pi^{*}A_{1}) \circ \bar t \ar@{=>}[r]^{(\id \circ \pi^{*}B) \circ \id} & (t' \circ \pi^{*}A_2 ) \circ \bar t = h_2\text{.}}
\end{equation*}

\begin{lemma}
The modification $E$ makes the diagram (\ref{6}) commutative so that $\ex{\pi}(B)  \df  E$ is a descent 2-morphism
\begin{equation*}
\ex{\pi}(B) \maps  \ex{\pi}(A_1) \Rightarrow \ex{\pi}(A_2)\text{.}
\end{equation*}
\end{lemma}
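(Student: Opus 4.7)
The plan is to unpack the definition of $\varepsilon_k$ given before the lemma and then exhibit the square (\ref{6}) as a pasting of subdiagrams, each of which either does not involve $A_k$ at all or involves it in exactly one slot where naturality applies. Concretely, the definition of $\varepsilon_k$ is built from a fixed sequence of associators, left and right \unifier s, the modifications $i_{t}, i_{t'}, j_{t}, j_{t'}$, and a single whiskering by $\pi^{*}A_k$ appearing in two spots (namely $\pi_2^{*}\pi^{*}A_k$ in the source and $\pi_1^{*}\pi^{*}A_k$ in the target, which coincide as pullbacks along $\pi\circ \pi_1=\pi\circ \pi_2$). Replacing $A_1$ by $A_2$ at these spots, and inserting $\pi^{*}B$ as a whisker in the remaining position, is precisely what $E = (\id_{t'} \circ \pi^{*}B) \circ \id_{\bar t}$ encodes.

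First I would write $\varepsilon_k$ as the vertical composition $\varepsilon_k = \beta \circ (\alpha_k) \circ \gamma$ of three layers, where $\gamma$ and $\beta$ are the \emph{structural} pieces (only associators, \unifier s and the $i,j$-modifications for $t,t'$) that are independent of $k$, and $\alpha_k$ is the single layer in which $\pi_1^{*}\pi^{*}A_k$ and $\pi_2^{*}\pi^{*}A_k$ appear as whiskers. Pulling back the modification $B$ along $\pi$ and then along $\pi_1$ respectively $\pi_2$ yields modifications $\pi_i^{*}\pi^{*}B$, and by the modification axiom applied to $B$ (combined with functoriality of $\pi^{*}$ and $\pi_i^{*}$ on 2-morphisms) we obtain the equality
\begin{equation*}
\alpha_2 \circ (\id \circ \pi_1^{*}(\id \circ \pi^{*}B \circ \id)) = (\pi_2^{*}(\id \circ \pi^{*}B \circ \id) \circ \id) \circ \alpha_1.
\end{equation*}
This is the heart of the argument, and it is simply the statement that horizontal whiskering by a 2-cell is natural in that 2-cell.

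Next I would paste this middle identity between the structural layers $\gamma$ and $\beta$. Since $\gamma$ and $\beta$ are the same for both values of $k$, no further interaction with $B$ occurs; the top and bottom edges of (\ref{6}) are recovered as $(\pi_2^{*}E \circ \id) \circ \varepsilon_1$ and $\varepsilon_2 \circ (\id \circ \pi_1^{*}E)$ respectively, using naturality of the associators (and functoriality of the pullbacks $\pi_i^{*}$) to push the whisker $\pi^{*}B$ through the bracketings of $\gamma$ and $\beta$. The resulting diagram is then commutative.

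The main obstacle is purely bookkeeping: one must verify that the whiskering implicit in $E$ lines up correctly with the slot in $\alpha_k$ where $\pi^{*}A_k$ sits, across the several parenthesizations used to define $\varepsilon_k$. This is entirely analogous to the pentagon/\unifier\ patching shown in the proof of Lemma \ref{lem5}, and no genuinely new coherence input is required beyond the modification axiom for $B$ and the coherence theorem for the target 2-category $T$.
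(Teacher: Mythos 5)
Your argument is correct, and it is exactly the routine verification that the paper omits (the lemma is stated without proof, as a computation \quot{similar} to Lemma \ref{lem5}): since diagram \erf{6} is an equality of modifications, it is checked componentwise at points $\alpha\in Y^{[2]}$, where both sides are vertical stacks of horizontal composites in $T$ whose only non-identity 2-cells are associators, \unifier s and components of $i_t,i_{t'}$; the whisker $\pi^{*}B$ slides through these layers by the interchange law (C3) and naturality of the associators, converting $\id_{\pi^{*}A_1}$ into $\id_{\pi^{*}A_2}$ as it passes. Two small imprecisions worth noting: the identity whisker on $\pi^{*}A_k$ occupies a slot in \emph{every} layer of the composite defining $\varepsilon_k$, not in a single layer $\alpha_k$ flanked by $k$-independent layers (only the non-identity 2-cells are $k$-independent, the 1-morphism carrying the identity changes), and the modification axiom for $B$ is not actually what drives the commutativity of \erf{6} -- interchange and coherence suffice there; where the axiom \erf{app2cat_2} for $B$ is genuinely needed is in checking that $E$ is a modification $h_1\Rightarrow h_2$ in the first place, since the components of $h_k$ at 1-morphisms involve the components $A_k(\pi_{*}\gamma)$.
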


In order to finish the definition of the 2-functor $\ex{\pi}$ we have to define its compositors and unitors.
We consider two composable \pt s $A_1 \maps F \to
F'$ and $A_2 \maps F' \to F''$ and the extracted descent 1-morphisms
$(h_k,\varepsilon_k)  \df  \ex{\pi}(A_k)$ for $k=1,2$ and $(\tilde h,\tilde\varepsilon) \df  \ex{\pi}(A_2 \circ A_1)$.
The compositor
\begin{equation*}
c_{A_1,A_2} \maps  \ex{\pi}(A_2) \circ \ex{\pi}(A_2)
\Rightarrow \ex{\pi}(A_2 \circ A_1)
\end{equation*}
is the modification $h_{2} \circ h_1 \Rightarrow \tilde h$ defined by
\begin{equation*}
\alxydim{@C=0.4cm}{((t'' \circ \pi^{*}A_2) \circ \bar t')\circ ((t' \circ \pi^{*}A_1) \circ \bar t) \ar@{=>}[r]  & (t'' \circ (\pi^{*}A_2 \circ (( \bar t' \circ t') \circ \pi^{*}A_1 ))) \circ \bar t \hspace{-2cm} \ar@{=>}[d]^{(\id \circ (\id \circ (i_{t'} \circ \id ))) \circ \id} \\ & \hspace{-2cm} (t'' \circ (\pi^{*}A_2 \circ (\id \circ \pi^{*}A_1 ))) \circ \bar t  \ar@{=>}[r] & (t'' \circ \pi^{*}(A_2 \circ A_1)) \circ \bar t\text{.}}
\end{equation*}
For a 2-functor $F \maps \mathcal{P}_2(M) \to T$ we find $\ex{\pi}(\id_{F})
= t \circ \bar t$. So, the unitor
\begin{equation*}
u_{F}  \maps \ex{\pi}(\id_{F})
\Rightarrow \id_{\mathrm{triv}_i}
\end{equation*}
is the modification $u_{F}  \df  j_t^{-1}$. The  identities (\ref{app2cat_1}) for $i_t$ and $j_t$ show that compositors and unitors are  descent 2-morphisms. The following statement is now straightforward to check. 

\begin{proposition}
The structure collected above furnishes a 2-functor
\begin{equation*}
\ex{\pi} \maps \loctrivfunct{i}{\mathrm{Gr}}{2} \to \trans{i}{\mathrm{Gr}}{2}\text{.}
\end{equation*}
\end{proposition}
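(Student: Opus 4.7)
The plan is to verify the 2-functor axioms for $\ex{\pi}$ as listed in Appendix~\ref{app1}. The three preceding lemmas already supply the assignments on objects, 1-morphisms, and 2-morphisms, and the compositors $c_{A_1,A_2}$ and unitors $u_F$ have been defined above. What remains is to verify (a) that each $c_{A_1,A_2}$ and each $u_F$ is itself a descent 2-morphism, (b) naturality of $c$ with respect to modifications, (c) the associativity coherence for the compositors, and (d) the two unit coherence axioms.

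For step (a), I would argue as follows. When $u_F = j_t^{-1}$ is substituted into the defining diagram (\ref{6}) for a descent 2-morphism, the resulting identity, after unfolding $g = \pi_2^{*}t \circ \pi_1^{*}\bar t$ and the identity descent 1-morphism, collapses to one of the triangle identities (\ref{app2cat_1}) linking $i_t$ and $j_t$. For $c_{A_1,A_2}$ the chase is longer but of the same character: expanding the two sides of (\ref{6}) using the explicit formulae for $\varepsilon_1$, $\varepsilon_2$ and $\tilde\varepsilon$, one finds that both sides contain a single occurrence of the modification $i_{t'}$ on a common subexpression, and the remaining diagram reduces to associators and unifiers of $T$ that agree by bicategory coherence, exactly as in the proof of Lemma~\ref{lem5}.

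For step (b), the naturality equation reads
\begin{equation*}
c_{A_1',A_2'} \cdot \bigl(\ex{\pi}(B_2) \circ \ex{\pi}(B_1)\bigr) = \ex{\pi}(B_2 \circ B_1) \cdot c_{A_1,A_2}
\end{equation*}
for modifications $B_k \maps A_k \Rightarrow A_k'$ between composable \pt s; since $\ex{\pi}(B_k)$ is the whiskering of $B_k$ by the pseudonatural equivalences $t'$ and $\bar t$ (or $t''$ and $\bar t'$) and since the only non-trivial ingredient of $c_{A_1,A_2}$ is the modification $i_{t'}$ acting on a subexpression untouched by these whiskerings, the identity follows from the interchange law in $T$. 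Naturality of $u$ is immediate from its definition $u_F = j_t^{-1}$.

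For steps (c) and (d), the associativity pentagon for $c$ over a composable triple $A_1,A_2,A_3$ is verified by expanding both pastings and observing that they differ only in the order in which the modifications $i_{t'}$ and $i_{t''}$ are applied to disjoint subterms, so they commute via the interchange law and bicategory coherence. The two unit triangles similarly reduce to the triangle identities (\ref{app2cat_1}) for $(i_t,j_t)$ and $(i_{t'},j_{t'})$. The main obstacle throughout is purely combinatorial: every verification must be transported into the weak 2-category $T$, producing a large diagram of associators, unifiers, and composites. My strategy would be to invoke the coherence theorem for bicategories once and for all to suppress all pure associator/unifier reshuffling, so that each of the four axioms is reduced to the tracking of a handful of instances of $i_t$, $j_t$, $i_{t'}$, $j_{t'}$ --- the same organising principle already applied in the proof of Lemma~\ref{lem5}.
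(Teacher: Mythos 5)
Your proposal is correct and follows essentially the same route as the paper, which merely records that the identities (\ref{app2cat_1}) for $i_t$ and $j_t$ make the compositors and unitors into descent 2-morphisms and declares the remaining axioms straightforward: in both cases everything is reduced to the zigzag identities together with bicategory coherence in $T$, with only a handful of occurrences of $i_t$, $j_t$, $i_{t'}$, $j_{t'}$ to track. The one item you leave implicit is axiom (F1), preservation of vertical composition and identity 2-morphisms, which is immediate because $\ex{\pi}(B)=(\id\circ\pi^{*}B)\circ\id$ is a whiskering and the interchange law applies.
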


\setsecnumdepth{2}

\section{Reconstruction from Descent Data}

\label{sec:recon}

We have so far described how  globally defined  2-functors induce locally defined structure, in terms of the 2-functor $\ex{\pi}$. In this section we describe a 2-functor going in the other direction. 

\subsection{A Covering of the Path 2-Groupoid}

\label{sec2_1}

In this section we introduce  the \emph{\codescent} $\upp{\pi}{2}{M}$ associated to a surjective submersion $\pi\maps Y \to M$. It combines the path 2-groupoid of $Y$
with additional jumps between  the fibres. This construction generalizes the groupoid $\upp{\pi}{1}{M}$ from \cite{schreiber3}.

The objects of $\upp{\pi}{2}{M}$ are all points $a \in Y$. There are  two \quot{basic} 1-morphisms:
\begin{enumerate}
\item[(1)]
\emph{Paths}: rank-one homotopy classes of paths $\gamma \maps a \to a'$ in $Y$.
\item[(2)]
\emph{Jumps}: points $\alpha \in Y^{[2]}$ considered
as  1-morphisms from $\pi_1(\alpha)$ to $\pi_2(\alpha)$.
\end{enumerate}
The set of 1-morphisms of $\upp{\pi}{2}{M}$ is freely generated from these two basic 1-morphisms, i.e. we have a formal composition $*$ and a formal identity $\id_a^{*}$ (the empty composition) associated to every object $a\in Y$. We introduce six \quot{basic} 2-morphisms:
\begin{enumerate}
\item[(1)]
Four 2-morphisms of  \emph{essential} type:
\begin{enumerate}
\item 

Rank-two homotopy classes of bigons  $\Sigma \maps \gamma_1 \Rightarrow \gamma_2$ in $Y$ going between paths.

\item
Rank-one homotopy classes of paths $\Theta \maps  \alpha \to \alpha'$ in $Y^{[2]}$ considered as  2-isomorphisms
\begin{equation*}
\Theta \maps  \alpha' * \pi_1(\Theta) \Rightarrow \pi_2(\Theta) * \alpha\text{,}
\end{equation*} 
going between  1-morphisms mixed from jumps and paths.

\item
Points $\Xi \in Y^{[3]}$ considered as 2-isomorphisms
\begin{equation*}
\Xi \maps \pi_{23}(\Xi) * \pi_{12}(\Xi) \Rightarrow \pi_{13}(\Xi)
\end{equation*}
going between jumps.
\item
Points $a\in Y$ considered as 2-isomorphisms 
\begin{equation*}
\Delta_a \maps  \id_a^{*} \Rightarrow (a,a)
\end{equation*}
relating the formal identity with the trivial jump.
\end{enumerate}
In (b) to (d) we demand that the 2-morphisms $\Theta$, $\Xi$ and $\Delta_a$ come with
formal inverses, denoted by $\Theta^{-1}$, $\Xi^{-1}$ and $\Delta_a^{-1}$.

\item[(2)] Two 2-morphisms of \emph{technical} type:
\begin{enumerate}
\item
associators for the formal composition, i.e. 2-isomorphisms
\begin{equation*}
a^{*}_{\beta_1,\beta_2,\beta_3}  \maps  (\beta_3 * \beta_2) * \beta_1 \Rightarrow \beta_3 * (\beta_2 * \beta_1)
\end{equation*}
for $\beta_k$ either paths or jumps,
and \unifier s
\begin{equation*}
l_{\beta} \maps  \beta * \id_{a}^{*} \Rightarrow \beta
\quad\text{ and }\quad
r_{\beta} \maps  \id_{b}^{*} * \beta \Rightarrow \beta\text{.}
\end{equation*} 
\item 
for points $a\in Y$ and composable paths $\gamma_1$ and $\gamma_2$ 2-isomorphisms 
\begin{equation*}
u^{*}_a \maps \id_a \Rightarrow \id_a^{*}
\quad\text{ and }\quad
c^{*}_{\gamma_1,\gamma_2} \maps \gamma_2 * \gamma_1 \Rightarrow \gamma_2 \circ \gamma_1
\end{equation*}
expressing that the formal composition restricted to paths yields  the usual composition of paths. 
\end{enumerate}
\end{enumerate}

Now we consider the set which is freely generated from these basic 2-morphisms in virtue of a formal horizonal composition $*$ and a formal vertical composition $\circledast$.  The formal identity 2-morphisms are denoted by  $\id_{\beta}^{\circledast} \maps \beta \Rightarrow \beta$ for any 1-morphism $\beta$.     
The set of 2-morphisms of the 2-category $\upp{\pi}{2}{M}$ is this set subject to the following list of identifications:
\begin{enumerate}
\item[(I)] Identifications of \emph{2-categorical type}. The formal compositions $*$ and $\circledast$, and the 2-isomorphisms of type (2a) form the structure of a 2-category and we impose all identifications required by   the axioms (C1) to (C4).

\item[(II)]
Identifications of \emph{2-functorial type}. We have the structure of a 2-functor
\begin{equation*}
\iota \maps  \mathcal{P}_2(Y) \to \upp{\pi}{2}{M}\text{.}
\end{equation*}
This 2-functor regards points, paths and bigons in $Y$ as objects, 1-morphisms of type (1) and 2-morphisms of type (1a), respectively. Its compositors and unitors are the 2-isomorphisms $c^{*}$ and $u^{*}$ of type (2b). We impose all identification required by the axioms (F1) to (F4) for this 2-functor.

\item[(III)]
Identifications of \emph{transformation type}. We have the structure of a \pt
\begin{equation*}
\Gamma \maps \pi_1^{*}\iota \to \pi_2^{*}\iota
\end{equation*}
between 2-functors defined over $Y^{[2]}$. Its component at a 1-morphism $\Theta \maps \alpha \to \alpha'$ in $\mathcal{P}_1(Y^{[2]})$ is the 2-isomorphism $\Theta$ of type (1b). We impose all identifications required by the axioms (T1) and (T2) for this \pt.

\item[(IV)]
Identification of \emph{modification type}. We have the structure of a modification
\begin{equation}
\label{40}
 \pi_{23}^{*}\Gamma \circ \pi_{12}^{*}\Gamma \Rightarrow \pi_{13}^{*}\Gamma
\end{equation}
between \pt s of 2-functors defined over $Y^{[3]}$. Its component at an object $\Xi\in Y^{[3]}$ is the 2-isomorphism $\Xi$ of type (1c). We have the structure of  another modification
\begin{equation}
\label{41}
\id_{\iota} \Rightarrow \Delta^{*}\Gamma
\end{equation}
between \pt s of 2-functors over $Y$, whose component at an object $a\in Y$ is the 2-isomorphism $\Delta_a$ of type (1d). 
We impose all identifications required by the commutativity of diagram (\ref{app2cat_2}) for both modifications. 
\item[(V)]
Identifications of \emph{essential type}: 
\begin{enumerate}
\item[1.]
For every point $\Psi\in Y^{[4]}$ we impose the commutativity of the diagram
\begin{equation*}
\alxydim{@C=-1.4cm@R=1.3cm}{&&& (\pi_{34}(\Psi) * \pi_{23}(\Psi)) * \pi_{12}(\Psi) \ar@{=>}[dlll]_{a^{*}} \ar@{=>}[drrr]^{\pi_{234}(\Psi)*\id^{*}} &&& \\ \pi_{34}(\Psi) * (\pi_{23}(\Psi) * \pi_{12}(\Psi)) \ar@{=>}[drr]_{\id^{*}*\pi_{123}(\Psi)} &&&&&& \hspace{2.2cm}\pi_{24}(\Psi) *\pi_{12}(\Psi)\hspace{2.2cm} \ar@{=>}[dll]^{\pi_{124}(\Psi)} \\ && \pi_{34}(\Psi) * \pi_{13}(\Psi) \ar@{=>}[rr]_-{\pi_{134}(\Psi)} && \pi_{14}(\Psi) &&}
\end{equation*}
of compositions of jumps.
\item[2.]
For every point $\alpha\in Y^{[2]}$ we impose the commutativity of the diagrams
\begin{equation*}
\alxydim{@R=1.1cm@C=0.6cm}{\id_{b}^{*} * \alpha\ \ar@{=>}[rr]^{b * \id_{\alpha}^{*}} \ar@{=>}[dr]_{r_{\alpha}^{*}} && (b,b) * \alpha\ar@{=>}[dl]^{(a,b,b)} \\ &\alpha&}
\text{ and }
\alxydim{@R=1.1cm@C=0.6cm}{\alpha * \id_{a}^{*} \ar@{=>}[rr]^{\id_{\alpha}^{*} \circ a} \ar@{=>}[dr]_{l_{\alpha}^{*}} && \alpha * (a,a)\ar@{=>}[dl]^{(a,a,b)} \\ &\alpha\text{.}&}
\end{equation*}
\end{enumerate}
\end{enumerate}

According to (I) we have defined a 2-category $\upp{\pi}{2}{M}$.  
\begin{lemma}
\label{app2pp_lem2}
The 2-category  $\upp{\pi}{2}{M}$ is a 2-groupoid. 
\end{lemma}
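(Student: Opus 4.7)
The plan is to verify the two defining conditions of a 2-groupoid separately: first, that every 2-morphism in $\upp{\pi}{2}{M}$ is invertible (strictly, i.e.\ under vertical composition), and second, that every 1-morphism admits a weak inverse (a 1-morphism whose composites with it in either order are 2-isomorphic to the appropriate formal identities).

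For the 2-morphisms, I would argue directly on the generators. Each of the four essential types comes equipped with an inverse: bigons of type (1a) are rank-two homotopy classes, so invertible by the definition of $\mathcal{P}_2(Y)$; the basic 2-morphisms of types (1b), (1c) and (1d) are declared to come with formal inverses $\Theta^{-1}$, $\Xi^{-1}$, $\Delta_a^{-1}$. The technical generators of type (2a) are associators and \unifier s of the formal 2-category structure, hence invertible via the identifications (I) forcing the axioms (C1)--(C4); the generators of type (2b) are compositors and unitors of the 2-functor $\iota$, so they are invertible by (II) forcing the axioms (F1)--(F4). Since every generator has an inverse and a vertical composite $\eta_k \circledast \cdots \circledast \eta_1$ is inverted by $\eta_1^{-1} \circledast \cdots \circledast \eta_k^{-1}$ (and horizontal composites by horizontal composition of inverses), every 2-morphism in $\upp{\pi}{2}{M}$ is invertible.

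For the 1-morphisms I would first treat the basic ones. A path $\gamma\maps a \to a'$ in $Y$ has its reverse $\gamma^{-1}$ as a weak inverse in $\mathcal{P}_2(Y)$; applying the 2-functor $\iota$ and combining with the compositor $c^{*}_{\gamma^{-1},\gamma}$ of type (2b) and the inverse of the unitor $u^{*}_a\maps \id_a \Rightarrow \id_a^{*}$ produces an invertible 2-morphism $\gamma^{-1} * \gamma \Rightarrow \id_a^{*}$, and similarly on the other side. For a jump $\alpha = (a,b) \in Y^{[2]}$ I would take $\alpha^{-1} \df (b,a)$ as weak inverse. The point $\Xi \df (a,b,a)\in Y^{[3]}$ gives, via the generator of type (1c), a 2-isomorphism $\alpha^{-1} * \alpha = \pi_{23}(\Xi) * \pi_{12}(\Xi) \Rightarrow \pi_{13}(\Xi) = (a,a) = \Delta_{aa}$, and composing vertically with $\Delta_a^{-1}\maps (a,a) \Rightarrow \id_a^{*}$ yields a 2-isomorphism $\alpha^{-1} * \alpha \Rightarrow \id_a^{*}$; the other composite $\alpha * \alpha^{-1} \Rightarrow \id_b^{*}$ is obtained symmetrically using $(b,a,b)$.

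To pass from generators to arbitrary 1-morphisms, I would use induction on the length of a formal composite: if $\beta = \beta_k * \cdots * \beta_1$ with each $\beta_j$ a basic 1-morphism equipped with a chosen weak inverse $\beta_j^{-1}$, then $\beta^{-1} \df \beta_1^{-1} * \cdots * \beta_k^{-1}$ is a weak inverse for $\beta$; the required invertible 2-morphisms $\beta^{-1} * \beta \Rightarrow \id^{*}$ and $\beta * \beta^{-1} \Rightarrow \id^{*}$ are assembled from the associators $a^{*}$, the \unifier s $l,r$ and the 2-isomorphisms constructed for each factor, and are automatically invertible by the first part. The step I expect to require the most care is checking that these assembled 2-morphisms descend through the identifications (I)--(V); this however follows because all identifications are themselves equalities between 2-morphisms built from invertible generators, so the construction is compatible with the quotient. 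Combining both parts establishes that $\upp{\pi}{2}{M}$ is a 2-groupoid.
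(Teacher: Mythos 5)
Your overall strategy coincides with the paper's: invert the 2-morphisms generator by generator, exhibit the reverse path $\gamma^{-1}$ and the swapped jump $(b,a)$ as weak inverses of the two kinds of basic 1-morphisms, and build the unit/counit 2-isomorphisms from the generators of types (2b), (1c) and (1d). However, you skate over the two points where the paper's proof actually has to work, and both are genuine gaps. First, for a bigon $\Sigma$ of type (1a) it is not enough that $\Sigma$ is invertible in $\mathcal{P}_2(Y)$: in $\upp{\pi}{2}{M}$ the relevant composition is the \emph{formal} vertical composition $\circledast$, and one must show $\Sigma^{-1}\circledast\Sigma=\id_{\gamma}^{\circledast}$, which holds only because identification (II) imposes axiom (F1) for $\iota$, identifying $\Sigma^{-1}\circledast\Sigma$ with $\iota(\Sigma^{-1}\bullet\Sigma)=\iota(\id_\gamma)=\id_\gamma^{\circledast}$. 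Your appeal to ``invertible by the definition of $\mathcal{P}_2(Y)$'' does not by itself produce an inverse with respect to $\circledast$.

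Second, the paper's notion of invertible 1-morphism requires the 2-isomorphisms $i$ and $j$ to satisfy the zigzag identities \erf{app2cat_1}, and your proof never verifies these. This is not a formality: for the jumps it is exactly here that the essential identifications (V1) and (V2) are used (for the paths it is again identification (II) that does the work). As written, your argument never invokes (V1) or (V2) at all, which is a sign that something needed for weak invertibility has been omitted. Your closing worry about the constructed 2-morphisms ``descending through the identifications'' is also misplaced --- any word in the generators automatically represents an element of the quotient; the identifications are not an obstruction to well-definedness but precisely the \emph{input} that makes the equations above hold. The induction on the length of a formal composite that you add is harmless (composites of weakly invertible 1-morphisms are weakly invertible in any 2-category) and is left implicit in the paper.
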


\begin{proof}
All 2-morphisms except those of type (1a) are invertible by definition. But for a 2-morphism of type (1a), a bigon $\Sigma \maps \gamma \Rightarrow \gamma'$, we have
\begin{equation*}
\Sigma^{-1} \circledast \Sigma \stackrel{\text{(II)}}{=} \Sigma^{-1} \bullet \Sigma = \id_{\gamma} \stackrel{\text{(II)}}{=} \id_{\gamma}^{\circledast}\text{,}
\end{equation*}
and analogously $\Sigma \circledast \Sigma^{-1}=\id_{\gamma'}^{\circledast}$. Here we have used identification (II); more precisely axiom (F1) of the 2-functor $\iota \maps  \mathcal{P}_2(Y) \to \upp{\pi}{2}{M}$. To see that a path $\gamma \maps a \to b$ is invertible, we claim that $\gamma^{-1}$ is a weak inverse. It is easy to construct the 2-isomorphisms $i_{\gamma}$ and $j_{\gamma}$ 
using the 2-isomorphisms of type (2b). The required identities (\ref{app2cat_1}) for these 2-isomorphisms are then satisfied due to  identification (II).
To see that a jump $\alpha\in Y^{[2]}$ with $\alpha=(x,y)$ is invertible, we claim that $\bar\alpha  \df  (y,x)$ is a weak inverse. The 2-isomorphisms $i_{\alpha}$ and $j_{\alpha}$ can be constructed from
2-isomorphisms of types (1c) and (1d). The identities (\ref{app2cat_1}) are satisfied du to identifications (V1) and (V2).
\end{proof}

We remark that we have  a 2-functor 
$\iota \maps  \mathcal{P}_2(Y) \hookrightarrow \upp{\pi}{2}{M}$,
a \pt\ $\Gamma$ and  modifications \erf{40} and \erf{41} claimed by identifications (II), (III) and (IV).

\subsection{The Section 2-Functor}

\label{sec2_2}
\label{app2}

There is  a canonical strict 2-functor
\begin{equation*}
\uppp \maps  \upp{\pi}{2}{M} \to \mathcal{P}_2(M)
\end{equation*}
whose composition with the 2-functor $\iota$ is equal to the 2-functor $\pi_{*} \maps  \mathcal{P}_2(Y) \to \mathcal{P}_2(M)$ induced from the projection, i.e.
\begin{equation} 
\label{1}
\uppp \circ \iota = \pi_{*}\text{.}
\end{equation}
It sends all 1-morphisms and 2-morphisms which are not in the image of $\iota$  to identities.   
In this section we show the following result.

\begin{proposition}
\label{prop2}
The 2-functor $\uppp$ is an equivalence of 2-categories. 
\end{proposition}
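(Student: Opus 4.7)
The plan is to verify that $\uppp$ is an equivalence of 2-categories by checking the standard three-part criterion: essential surjectivity on objects, essential surjectivity on 1-morphisms between any fixed pair of objects, and bijectivity on 2-morphisms between any fixed pair of 1-morphisms.

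Essential surjectivity on objects is immediate since $\pi$ is surjective: for any $x \in M$ pick $a \in \pi^{-1}(x)$ and obtain $\uppp(a) = x$. For essential surjectivity on 1-morphisms, take $\gamma \maps x \to y$ in $\mathcal{P}_2(M)$ and lifts $a \in \pi^{-1}(x)$, $b \in \pi^{-1}(y)$. Since $\pi$ is a submersion, by compactness of $[0,1]$ there is a partition $0 = t_0 < t_1 < \dots < t_n = 1$ together with local sections $\sigma_i$ of $\pi$ defined around $\gamma([t_{i-1}, t_i])$. The lifted segments $\tilde\gamma_i \df \sigma_i \circ \gamma|_{[t_{i-1}, t_i]}$ compose in $\upp{\pi}{2}{M}$ with intermediate jumps $(\tilde\gamma_i(t_i), \tilde\gamma_{i+1}(t_i)) \in Y^{[2]}$ and boundary jumps $(a, \tilde\gamma_1(0))$, $(\tilde\gamma_n(1), b)$ to form a 1-morphism $\tilde\gamma \maps a \to b$. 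Because $\uppp$ sends jumps to identities and satisfies $\uppp \circ \iota = \pi_{*}$ by \erf{1}, its image $\uppp(\tilde\gamma)$ reconstitutes $\gamma$.

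For essential surjectivity on 2-morphisms the same strategy applies in two dimensions: given a bigon $\Sigma \maps \gamma_1 \Rightarrow \gamma_2$ in $\mathcal{P}_2(M)$ between the images of prescribed 1-morphisms $\tilde\gamma_1, \tilde\gamma_2$ of $\upp{\pi}{2}{M}$, subdivide the parameter square $[0,1]^2$ finely enough that $\Sigma$ lifts on each sub-rectangle through a local section of $\pi$. The lifted pieces are bigons in $Y$ (type (1a)), and the discrepancies between adjacent pieces, as well as between the boundary of the assembled lift and the prescribed $\tilde\gamma_1, \tilde\gamma_2$, are absorbed by the 2-isomorphisms of types (1b)--(1d) together with the technical associators and unitors (2b).

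The main obstacle is faithfulness on 2-morphisms, that is, showing two such lifts of a single $\Sigma$ coincide in $\upp{\pi}{2}{M}$. This is a coherence problem: any two subdivisions admit a common refinement, and the identifications (I)--(V) of Section \ref{sec2_1} --- which codify exactly the coherence axioms for a 2-functor, a pseudonatural transformation and two modifications --- must suffice to identify the two resulting lifts. A cleaner alternative is to bundle the lifting procedure above into a section 2-functor $s \maps \mathcal{P}_2(M) \to \upp{\pi}{2}{M}$, establish $\uppp \circ s = \id_{\mathcal{P}_2(M)}$ strictly from \erf{1}, and then construct a pseudonatural equivalence $s \circ \uppp \cong \id_{\upp{\pi}{2}{M}}$ whose components come from $\iota$ and $\Gamma$ and whose coherence data is supplied by the modifications \erf{40}--\erf{41}; this produces the weak inverse explicitly and dodges the verification of faithfulness.
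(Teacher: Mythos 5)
Your final paragraph arrives at the same architecture as the paper: a section 2-functor $s \maps \mathcal{P}_2(M) \to \upp{\pi}{2}{M}$ with $\uppp \circ s = \id_{\mathcal{P}_2(M)}$ holding on the nose because of \erf{1}, together with a pseudonatural equivalence $\zeta \maps s \circ \uppp \to \id_{\upp{\pi}{2}{M}}$ whose components are jumps and 2-isomorphisms of types (1b)--(1d). The existence parts of your sketch (lifting points, paths, and bigons piecewise through local sections) also match the paper's construction and Lemma \ref{lem3}(a). The gap is your closing claim that this route \emph{dodges the verification of faithfulness}. It does not. To define $s$ on a bigon $\Sigma$ one must decompose $\Sigma$ into pieces subordinate to local sections and check that the result is independent of the decomposition; to define the compositor $c_{\gamma_1,\gamma_2} \maps s(\gamma_2) \circ s(\gamma_1) \Rightarrow s(\gamma_2 \circ \gamma_1)$ one must single out a distinguished 2-isomorphism between two different lifts of the same path; and to verify axiom (T2) for $\zeta$ and the zigzag identities for its weak inverse one must know that the various pasted 2-isomorphisms agree with whatever else could have been written down. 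All of these reduce to the statement that between any two 1-morphisms $\tilde\gamma, \tilde\gamma' \maps \tilde x \to \tilde y$ covering the same path $\gamma$ there is a \emph{unique} 2-isomorphism $A$ with $\uppp(A) = \id_\gamma$ --- i.e.\ Lemma \ref{lem3}(b), which rests on Lemma \ref{app2pp_lem1} about 2-isomorphisms between compositions of jumps over a single fibre. That uniqueness is precisely a local faithfulness assertion for $\uppp$.

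Proving it is the technical bulk of the paper's argument and is entirely missing from your proposal: one must show that an arbitrary 2-morphism of $\upp{\pi}{2}{M}$ projecting to an identity can be rewritten, using the identifications (I)--(V), as a composite of type (1c) generators only (by systematically ``killing'' the 2-morphisms of types (1a), (1b), (1d) and the technical ones), and that any two such composites are related by the bubble and fusion identities. Appealing to ``the identifications (I)--(V) must suffice'' is not a proof; without this step neither the well-definedness of $s$ nor the axioms for $\zeta$ are established, and your fallback route collapses onto the very coherence problem you flagged as the main obstacle. So: right strategy, but the hard part is not avoided --- it reappears as the well-definedness of $s$ and has to be proved.
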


In order to prove this  we introduce an inverse 2-functor
\begin{equation*}
s \maps  \mathcal{P}_2(M) \to \upp{\pi}{2}{M}\text{.}
\end{equation*}
Since the 2-functor $\uppp$ is surjective on objects, we call $s$ the \emph{section 2-functor}. To define $s$, we  lift points, paths and bigons in $M$ along the surjective submersion $\pi$, and use the jumps and the several 2-morphisms of the codescent 2-groupoid whenever no \quot{global} lifts exist.

For preparation we need the following technical lemma.
\begin{lemma}
\label{lem3}
Let $\gamma \maps x \to y$ be a path in $M$, and let $\tilde x, \tilde y \in Y$ be lifts of the endpoints, i.e. $\pi(\tilde x)=x$ and $\pi(\tilde y)=y$.
\begin{enumerate}
\item[(a)] 
There exists a 1-morphism $\tilde \gamma \maps \tilde x \to \tilde y$ in $\upp{\pi}{2}{M}$ such that $\uppp(\tilde\gamma) = \gamma$.
\item[(b)]
Let $\tilde \gamma \maps \tilde x \to \tilde y$ and $\tilde \gamma' \maps \tilde x \to \tilde y$ be two such 1-morphisms. Then, there exists a unique 2-isomorphism $A \maps \tilde\gamma \Rightarrow \tilde \gamma'$ in $\upp{\pi}{2}{M}$ such that $\uppp(A)=\id_{\gamma}$.
\end{enumerate}
\end{lemma}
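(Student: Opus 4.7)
The guiding idea is that since $\pi \maps Y \to M$ is a surjective submersion it admits smooth local sections over any point of $M$, and compactness of $[0,1]$ lets me break $\gamma$ into finitely many pieces each of which lifts globally. The jumps, together with the 2-isomorphisms $\Theta$ of transformation type, provide the gluing data between such local lifts, playing the role of transition functions.

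For part (a), I would first choose an open cover $\{U_k\}$ of $\gamma([0,1])$ by opens over each of which $\pi$ admits a smooth section $\sigma_k \maps U_k \to Y$. By the Lebesgue number lemma, pick a partition $0 = t_0 < t_1 < \cdots < t_n = 1$ such that $\gamma([t_{k-1}, t_k]) \subset U_{i(k)}$ for an appropriate index $i(k)$. After a small deformation within the rank-one homotopy class, which also arranges sitting instants at each subdivision point $t_k$, the restrictions $\gamma_k \df \gamma|_{[t_{k-1},t_k]}$ are paths with sitting instants, so $\tilde\gamma_k \df \sigma_{i(k)} \circ \gamma_k$ is a legitimate 1-morphism in $\mathcal{P}_2(Y)$. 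Setting
\begin{equation*}
\tilde\gamma \df (\tilde y, \tilde\gamma_n(t_n)) * \tilde\gamma_n * (\tilde\gamma_n(t_{n-1}), \tilde\gamma_{n-1}(t_{n-1})) * \cdots * \tilde\gamma_1 * (\tilde\gamma_1(t_0), \tilde x)
\end{equation*}
yields a 1-morphism from $\tilde x$ to $\tilde y$ in $\upp{\pi}{2}{M}$: each pair sits in $Y^{[2]}$ because both components project to the same point of $M$. By the explicit definition of $\uppp$ and \erf{1} one gets $\uppp(\tilde\gamma)=\gamma$, since $\uppp$ collapses each jump to an identity and on each lifted piece acts as $\pi \circ \sigma_{i(k)} = \id$.

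For part (b) I would first refine both $\tilde\gamma$ and $\tilde\gamma'$ to a \emph{common} subdivision of $[0,1]$: this uses the compositor $c^{*}$ of type (2b) to split each lifted path into smaller lifted pieces, and the modification \erf{40} applied at a suitable point of $Y^{[3]}$ to split a jump $(x,z)$ into $(y,z)*(x,y)$. After refinement both lifts are assembled from lifted pieces of $\gamma$ over the same subintervals, interspersed with jumps. On each common subinterval the two lifted paths $\tilde\gamma_k$ and $\tilde\gamma_k'$ cover the same piece of $\gamma$, hence combine into a path in $Y^{[2]}$; the 2-isomorphism $\Theta$ of type (1b) associated to this path is the canonical comparison. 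Stacking these comparisons together with the jump 2-isomorphisms of types (1c)--(1d) produces a 2-isomorphism $A \maps \tilde\gamma \Rightarrow \tilde\gamma'$, and because every generator used is sent by $\uppp$ to an identity we obtain $\uppp(A)=\id_{\gamma}$.

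The genuinely hard step is the \emph{uniqueness} of $A$. Two candidates differ by an automorphism of $\tilde\gamma$ that projects under $\uppp$ to $\id_{\gamma}$, and I would argue that the identifications (I)--(V) force any such automorphism to be the identity. Concretely, the axioms (T1)--(T2) built into identification (III) for the \pt\ $\Gamma$ eliminate any ambiguity when sliding jumps past paths; the modification axioms imposed by (IV) on \erf{40} and \erf{41} eliminate ambiguity when reassociating or cancelling consecutive jumps; and the 2-categorical coherence (I) for $*$ and $\circledast$ takes care of the remaining bookkeeping. This coherence argument, together with the care required in handling sitting instants and rank-one homotopy classes when subdividing, is the main technical obstacle; in spirit it is a Mac Lane-style coherence theorem for the data $(\iota, \Gamma, \erf{40}, \erf{41})$ presenting $\upp{\pi}{2}{M}$.
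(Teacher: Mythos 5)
Your construction for part (a) and your existence argument for part (b) follow the same route as the paper: part (a) is deferred there to an earlier result of the authors and is exactly the local-section-plus-subdivision construction you give, and for the existence half of (b) the paper likewise compares the two lifts over a common subdivision, inserting the type (1b) 2-isomorphisms $\Theta=(\tilde\gamma_k,\tilde\gamma_k')$ over the path pieces and 2-isomorphisms of types (1c) and (1d) over the jump points. (A minor slip: with the paper's convention a pair $(a,b)\in Y^{[2]}$ is a 1-morphism $a\to b$, so your first jump should read $(\tilde x,\tilde\gamma_1(t_0))$.)

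The genuine gap is the uniqueness claim in (b), which you correctly single out as the hard step but then only assert. Two things are missing. First, even over a single fibre you need that between any two 1-morphisms $a\to b$ which are compositions of jumps over the same point $p\in M$ there is a \emph{unique} 2-isomorphism projecting to $\id_{\id_p}$; this is not a consequence of Mac Lane-style coherence, because the generators of type (1c) are \emph{essential} rather than technical, and their uniqueness rests specifically on the identifications (V1) and (V2) (in the paper these appear as the \quot{fusion} and \quot{bubble} identities, the latter using the formal invertibility of the $\Xi$'s). Second, and more seriously, an arbitrary 2-isomorphism $A\maps\tilde\gamma\Rightarrow\tilde\gamma'$ with $\uppp(A)=\id_\gamma$ is a priori a wild composite that may contain bigons of type (1a) and type (1b) generators having nothing to do with your canonical comparison; saying that two candidates \quot{differ by an automorphism forced to be the identity} merely relocates the problem, since one must still show that every automorphism of $\tilde\gamma$ over $\id_\gamma$ is trivial. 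The paper handles this by an explicit normal-form reduction: it rewrites $\id_\gamma^{\circledast}$ using the modification \erf{41} and the naturality axiom (T2) of $\Gamma$ so as to kill every 2-morphism of type (1a), uses the modification axiom for \erf{40} to reduce every type (1b) generator to the special form $(\gamma_1,\gamma_2)$ with one leg an identity path, and then reduces to compositions of type (1c), where the single-fibre uniqueness applies; finally it decomposes a general $A$ into a vertical composite of canonical pieces and checks that a composite of canonical 2-morphisms is canonical. Without this reduction the uniqueness statement is unproved; the coherence theorem you appeal to would itself have to be established by exactly such an argument.
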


The assertion (a) is proven as \cite[Lemma 2.15]{schreiber3}. The proof of (b) requires some preparation.

\begin{lemma}
\label{app2pp_lem1}
Let $p\in M$ be a point and $a,b\in Y$ with $\pi(a)=\pi(b)=p$. Let $\alpha \maps a \to b$ and $\beta \maps a \to b$ be 1-morphisms in $\upp{\pi}{2}{M}$ which are  compositions of jumps.
\begin{enumerate}
\item[(a)]
There exists a 2-isomorphism $\Xi \maps \alpha\Rightarrow \beta$ with  $\uppp(\Xi)=\id_{\id_{p}}$.

\item[(b)] Any 2-isomorphism $\Xi \maps \alpha \Rightarrow \beta$ with $\uppp(\Xi)=\id_{\id_{p}}$ can be represented by a composition of 2-morphisms of type (1c).

\item[(c)]
The 2-isomorphism from (a) is unique.

\end{enumerate}
\end{lemma}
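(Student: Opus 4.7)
The plan is to treat the three parts in sequence, with (a) as a constructive reduction to a canonical form, (b) as a bookkeeping observation about the construction used in (a), and (c) as a coherence statement.

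For part (a), the idea is to rewrite both $\alpha$ and $\beta$ as the single jump $(a,b)$, using only structural 2-morphisms that project to identities under $\uppp$. Any composition of jumps from $a$ to $b$ has the form $(y_{n-1},y_n)*\cdots*(y_0,y_1)$ (with some bracketing) where $y_0=a$, $y_n=b$, and all $y_k$ lie in $\pi^{-1}(p)$. First I would use the associators of type (2a) to left-bracket the composition. Then I would repeatedly apply the 2-isomorphisms of type (1c) at the points $(y_{i-1},y_i,y_{i+1})\in Y^{[3]}$, each of which collapses $(y_i,y_{i+1})*(y_{i-1},y_i)\Rightarrow (y_{i-1},y_{i+1})$, shortening the composition by one. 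If any formal identity $\id_c^{*}$ appears, I first convert it to the jump $(c,c)$ via $\Delta_c$ of type (1d) and absorb it by the identifications (V2). After finitely many steps, $\alpha$ is taken to $(a,b)$, and likewise $\beta$; vertical composition of one reduction with the inverse of the other yields a 2-isomorphism $\Xi\maps\alpha\Rightarrow\beta$, and since every building block projects under $\uppp$ to an identity, $\uppp(\Xi)=\id_{\id_p}$.

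For part (b), note that the 2-isomorphism produced in (a) is by construction a vertical and horizontal composition of 2-morphisms of type (1c), together with associators, unifiers, and possibly type (1d) 2-morphisms used to eliminate formal identities. To handle an arbitrary 2-iso $\Xi'\maps\alpha\Rightarrow\beta$ with $\uppp(\Xi')=\id_{\id_p}$, I would argue that since no 2-morphism of type (1a), (1b) or (2b) appears in the \emph{horizontal} reduction of pure jump compositions (they would introduce genuine paths, which $\uppp$ does not kill), such a $\Xi'$ can already be represented using only type (1c) 2-morphisms and the technical type (2a). The proof is essentially a filtration argument on the basic 2-morphisms appearing in a representative, eliminating all non-(1c) contributions using the identifications (II), (III) and (IV).

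For part (c), uniqueness reduces via (b) to a pure coherence question: any automorphism of the single jump $(a,b)$ in $\upp{\pi}{2}{M}$ which is generated by type (1c) 2-morphisms and projects to $\id_{\id_p}$ must be $\id_{(a,b)}^{\circledast}$. This is exactly Mac Lane-style coherence, with the pentagon identity (V1) at points $\Psi\in Y^{[4]}$ playing the role of the associativity pentagon: it guarantees that any two ways of collapsing a longer composition of jumps to a single jump yield equal 2-isomorphisms. Combined with (V2) to deal with identity insertions, one runs the standard coherence argument (or, equivalently, constructs a strictification of the fiber sub-2-groupoid in which every such automorphism is forced to be trivial).

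The main obstacle is part (c): parts (a) and (b) are essentially rewriting, but (c) requires a genuine coherence theorem internal to the freely generated 2-category modulo the identifications (I)--(V). The axiom (V1) is tailored to make this coherence go through, so the proof amounts to verifying that (V1) and (V2) together provide exactly the relations needed for the Mac Lane argument.
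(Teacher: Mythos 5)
Your part (a) matches what the paper does (the paper only sketches it), and your part (c) is in the same spirit as the paper's, which makes the coherence concrete by listing two ``bubble'' identities (coming from the invertibility of the type (1c) generators) and four ``fusion'' identities, the first of which is exactly (V1) and the other three its analogues for the formal inverses $\Xi^{-1}$. The genuine gap is in part (b), which is where the paper invests nearly all of its effort, and where your argument rests on a false premise. You claim that 2-morphisms of types (1a), (1b) or (2b) cannot occur in a representative of $\Xi'$ because ``they would introduce genuine paths, which $\uppp$ does not kill.'' But $\uppp$ sends a path $\gamma$ in $Y$ to its projection $\pi\circ\gamma$; a path contained in the fibre $\pi^{-1}(p)$ projects to the constant path at $p$, whose rank-one homotopy class is $\id_p$, and a bigon in a fibre projects to $\id_{\id_p}$. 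So an arbitrary 2-isomorphism $\alpha \Rightarrow \beta$ with $\uppp(\Xi')=\id_{\id_p}$ may perfectly well pass through intermediate 1-morphisms containing fibre paths and may contain bigons of type (1a), path-components $\Theta$ of type (1b), and 2-morphisms of type (2b). Ruling these in, and then systematically eliminating them, is the substantive content of (b).

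That elimination is not a triviality, and your ``filtration argument'' supplies no mechanism for it. The paper proceeds by exploiting the identifications built into $\upp{\pi}{2}{M}$: identification (IV) yields the identity \erf{app2pp_1}, which lets one rewrite $\id_{\gamma}^{\circledast}$ in the ``fancy'' form \erf{app2pp_2} involving $\Delta(\gamma)$; then identification (III) (axiom (T2) for $\Gamma$) is used with $\Theta_1 = \Delta(\gamma)$ and $\Theta_2=(\gamma,\gamma')$ to absorb a bigon $\Sigma\maps\gamma\Rightarrow\gamma'$ into a type (1b) 2-morphism --- a step that is only possible because $\uppp(\Sigma)=\id_{\id_p}$. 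Similarly, the modification axiom over $Y^{[3]}$ (identity \erf{app2pp_5}) replaces a general type (1b) 2-morphism by ones whose path components are identity paths at an auxiliary fibre point $c$, which can then be converted via \erf{app2pp_1} into type (1d) 2-morphisms and removed. Without these steps, part (b) is unproven, and your reduction of (c) to a coherence statement about type (1c) generators has no foundation.
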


\begin{proof}
It is easy to construct the 2-isomorphism of (a) using only 2-isomorphisms of type (1c) and their inverses. To show (b) let  $\Xi \maps \alpha \Rightarrow \beta$ be a 2-isomorphism with $\uppp(\Xi)=\id_{\id_{p}}$, represented by a composition of 2-morphisms of any type. In the following we draw pasting diagrams to demonstrate that all 2-morphisms of types (1a), (1b) and (1d) can subsequently  be killed. 

To prepare some machinery notice that identification (III) imposes axiom (T2) for the \pt\ $\Gamma$,
which is, for any bigon $\Sigma \maps  \Theta_1 \Rightarrow \Theta_2$ in $Y^{[2]}$, the identity:
\begin{equation}
\label{app2pp_3}
\alxydim{@C=1.2cm@R=1.2cm}{\pi_1(\alpha) \ar[r]^{\pi_1(\Theta_1)} \ar[d]_{\alpha} & \pi_1(\alpha') \ar[d]^{\alpha'}
\ar@{=>}[dl]|{\Theta_1} \\ \pi_2(\alpha) \ar@/_3pc/[r]_{\pi_2(\Theta_2)}="2"   \ar[r]^{\pi_2(\Theta_1)}="1" & \pi_2(\alpha') \ar@{=>}"1";"2"|{\pi_2(\Sigma)}}
=
\alxydim{@C=1.2cm@R=1.2cm}{\pi_1(\alpha) \ar@/^3pc/[r]^{\pi_1(\Theta_1)}="1" \ar[r]_{\pi_1(\Theta_2)}="2"
\ar@{=>}"1";"2"|{\pi_1(\Sigma)} \ar[d]_{\alpha} & \pi_1(\alpha') \ar[d]^{\alpha'}
\ar@{=>}[dl]|{\Theta_2} \\ \pi_2(\alpha) \ar[r]_{\pi_2(\Theta_2)} & \pi_2(\alpha')}
\end{equation}
In the same way, identification (IV) imposes the axiom for the modification $\id_\iota \Rightarrow \Delta^{*}\Gamma$, which is, for any  path $\gamma \maps a \to b$ in $Y$, the identity
\begin{equation}
\label{app2pp_1}
\alxydim{@C=1.5cm@R=1.5cm}{a \ar[dr]|{\gamma}="5" \ar@/_4pc/[d]_{\Delta(a)}="1" \ar[r]^{\gamma} \ar[d]|{\id_a^{*}}="2" \ar@{<=}"1";"2"|-{\Delta_a} & b \ar[d]^{\id_b^{*}}
\ar@{=>}"5"|{r_{\gamma}^{*}} \\ a \ar[r]_{\gamma} & b \ar@{=>}"5";[l]|-{l_\gamma^{*-1}}}
=
\alxydim{@C=1.5cm@R=1.5cm}{a \ar[r]^{\gamma} \ar[d]_{\Delta(a)} & b \ar@/^4pc/[d]^{\id_b^{*}}="3" \ar[d]|{\Delta(b)}="4" \ar@{=>}"3";"4"|-{\Delta_b}
\ar@{=>}[dl]|{\Delta(\gamma)} \\ a \ar[r]_{\gamma} & b}
\end{equation}
Using \erf{app2pp_1} we can write the identity 2-morphism associated to the path $\gamma$ in a very fancy way, namely
\begin{equation}
\label{app2pp_2}
\id_{\gamma}^{\circledast} = \alxydim{@R=0.1cm@C=2cm}{&b \ar@/^1pc/[dr]|{\id_b^{*}}="1" \ar@{=>}[dd]|{\Delta(\gamma)} \ar[dr]_{\Delta(b)}="2"&\\a \ar@/^3.5pc/[rr]^{\gamma}="5" \ar@/_3.5pc/[rr]_{\gamma}="6" \ar@/_1pc/[dr]|{\id_a^{*}}="4" \ar[dr]^>>>>>>>>>{\Delta(a)}="3" \ar[ur]|{\gamma}&&b\text{.}\\&a \ar[ur]|{\gamma}& \ar@{=>}"1";"2" \ar@{=>}"3";"4" \ar@{=>}"5";[uul] \ar@{=>}[l];"6"}
\end{equation}

Now suppose that $\Sigma \maps \gamma \Rightarrow \gamma'$ is some 2-morphism of type (1a) that we want to kill. We write $\Sigma = \Sigma \circledast \id_{\gamma}^{\circledast}$ and use \erf{app2pp_2}. Using the naturality of the 2-morphism $l_{\gamma}^{*}$ claimed by identification (I) we have
\begin{equation*}
\Sigma = \alxydim{@R=0.1cm@C=2cm}{&b \ar@/^1pc/[dr]|{}="1" \ar@{=>}[dd]|{\Delta(\gamma)} \ar[dr]_{}="2"&\\a \ar@/^3.5pc/[rr]^{\gamma}="5" \ar@/_3.5pc/[rr]_{\gamma'}="6" \ar@/_1pc/[dr]|{}="4" \ar[dr]^{}="3" \ar[ur]|{\gamma}&&b\\&a \ar@/_1pc/[ur]_<<<<<{\gamma'}="8" \ar@/^0.9pc/[ur]|{\gamma}="7" \ar@{=>}"7";"8"|-{\Sigma}& \ar@{=>}"1";"2" \ar@{=>}"3";"4" \ar@{=>}"5";[uul]|{} \ar@{=>}[l];"6"} 
= \alxydim{@R=0.1cm@C=2cm}{&b \ar@/^1pc/[dr]|{\id_b^{*}}="1" \ar@{=>}[dd]|{\Theta_2} \ar[dr]_{\Delta(b)}="2"&\\a \ar@/^3.5pc/[rr]^{\gamma}="5" \ar@/_3.5pc/[rr]_{\gamma'}="6" \ar@/_1pc/[dr]|{\id_a^{*}}="4" \ar[dr]^>>>>>>>>>{\Delta(a)}="3" \ar[ur]|{\gamma}&&b\\&a \ar[ur]|{\gamma'}& \ar@{=>}"1";"2" \ar@{=>}"3";"4" \ar@{=>}"5";[uul] \ar@{=>}[l];"6"}
\end{equation*}
where the second identity is obtained from  \erf{app2pp_3} by taking $\Theta_1 \df \Delta(\gamma)$ and $\Theta_2  \df  (\gamma,\gamma')$ which is only possible because we have assumed that $\uppp(\Sigma)=\id_{\id_p}$. We can thus kill every 2-morphism of type (1a).

 Suppose now that  $\Psi \maps \mu \Rightarrow \nu$ is a 2-morphism of type (1b). To kill it we need identification (IV), namely the axiom for the modification $\pi_{23}^{*}\Gamma\circ \pi_{12}^{*}\Gamma \Rightarrow \pi_{13}^{*}\Gamma$. For any path $\Theta \maps \Xi \to \Xi'$ in $Y^{[3]}$, the corresponding pasting diagram is
\begin{equation}
\label{app2pp_5}
\hspace{-0.52cm}
\alxydim{@C=1.3cm}{\pi_1(\Xi) \ar[r]^-{\pi_1(\Theta)} \ar[d]|{\pi_{12}(\Xi)} \ar@/_3.4pc/[dd]_{\pi_{13}(\Xi)}="1" & \pi_1(\Xi') \ar@{=>}[dl]|{\pi_{12}(\Theta)} \ar[d]^{\pi_{12}(\Xi')} \\ \pi_2(\Xi) \ar[r]|-{\pi_2(\Theta)} \ar[d]|{\pi_{23}(\Xi)} & \pi_2(\Xi') \ar@{=>}[dl]|{\pi_{23}(\Theta)} \ar[d]^{\pi_{23}(\Xi')} \\ \pi_3(\Xi) \ar[r]_-{\pi_3(\Theta)} & \pi_3(\Xi') \ar@{=>}[ul];"1"|{\Xi}}
=
\alxydim{}{\pi_1(\Xi) \ar[r]^{\pi_{1}(\Theta)} \ar[dd]_{\pi_{13}(\Xi)} & \pi_1(\Xi') \ar@{=>}[ddl]|{\pi_{13}(\Theta)} \ar[dd]|{\pi_{13}(\Xi')}="1" \ar@/^1.3pc/[rd]^{\pi_{12}(\Xi')} \\ && \pi_2(\Xi') \ar@{=>}"1"|{\Xi'} \ar@/^1.3pc/[dl]^{\pi_{23}(\Xi')} \\ \pi_3(\Xi) \ar[r]_{\pi_3(\Theta)} & \pi_3(\Xi')}
\end{equation}
Here we suppress writing the associators and the bracketing of the 1-morphisms. Using this identity we have
\begin{equation}
\label{app2pp_4}
\Psi = \alxydim{@C=1.3cm}{\pi_1(\mu) \ar[r]^-{\pi_1(\Psi)} \ar[d]|{} \ar@/_3.4pc/[dd]_{\mu}="1" & \pi_1(\nu) \ar@/^3.4pc/[dd]^{\nu}="2" \ar@{=>}"2";[d]|{} \ar@{=>}[dl]|{\pi_{12}(\Theta)} \ar[d]|{} \\ c \ar[r]|-{\id_c} \ar[d]|{} & c \ar@{=>}[dl]|{\pi_{23}(\Theta)} \ar[d]|{} \\ \pi_2(\mu) \ar[r]_-{\pi_2(\Psi)} & \pi_2(\nu) \ar@{=>}[ul];"1"|{}}
\end{equation}
for $c\in Y$ an arbitrary  point with $\pi(c)=p$ and $\Theta  \df (\pi_1(\Psi),\id_c,\pi_2(\Psi))$ which is only possible because $\uppp(\Psi)=\id_{\id_p}$. 

We can now assume that the 2-morphism $\Xi \maps \alpha \Rightarrow \beta$ we started with contains no 2-morphism of type (1a) and by \erf{app2pp_4} only those 2-morphism $\Theta=(\gamma_1,\gamma_2)$ for which either  $\gamma_1$ or $\gamma_2$ is the identity path of the point $c$. If both $\gamma_1$ and $\gamma_2$ are identity paths, we can replace $\Theta$ according to \erf{app2pp_1} by two 2-morphisms of type (1d). It is now a combinatorial task to kill all 2-morphisms which start or end on paths, in particular all 2-morphisms of type (2b). Then one kills all 2-morphisms of types (1d) and the remaining unifiers $l^*_{\beta}$ and $r^{*}_{\beta}$. Finally, all associators $a^{*}$ can be killed using their naturality with respect to 2-morphisms of type (1c). This proves (b).

To prove (c) we assume that $\Xi' \maps \alpha \Rightarrow \beta$ is any 2-isomorphism with $\uppp(\Xi)=\id_{\id_{p}}$. By (b) we can assume that $\Xi'$ is composed only of 2-isomorphisms of type (1c). It is straightforward to see that two such compositions can be transformed into each other if six identities are satisfied: two \emph{bubble} identities and four \emph{fusion} identities.  The two bubble identities are
\begin{equation*}
\alxydim{@C=-0.3cm@R=0.6cm}{& \pi_2(\Psi) \ar[dr] & \\ \pi_1(\Psi) \ar[dr] \ar[ur] \ar[rr]|{}="1" \ar@{=>}[ur];"1"|-{\Psi} \ar@{=>}"1";[dr]|-{\bar\Psi} && \pi_3(\Psi) \\ & \pi_2(\Psi) \ar[ur]&} = \id_{\pi_{23}(\Psi)\circ\pi_{12}(\Psi)}
\;\;\text{ and }\;\;
\alxydim{@C=0.25cm}{\pi_1(\Psi) \ar@/^3pc/[rr]^{\pi_{13}(\Psi)}="1" \ar@/_3pc/[rr]_{\pi_{13}(\Psi)}="2" \ar[r] & \pi_2(\Psi) \ar[r] & \pi_{3}(\Xi) \ar@{=>}"1";[l]|{\bar\Psi} \ar@{=>}[l];"2"|{\Psi}}= \id_{\pi_{13}(\Psi)}\text{.}
\end{equation*}
They follow from the fact that the 2-morphisms of type (1c) are invertible. The first fusion identity is identification (V1),
\begin{equation*}
\alxydim{@C=1.5cm@R=1.7cm}{\pi_2(\Psi) \ar[r]^{\pi_{23}(\Psi)} \ar@{<-}[d]_{\pi_{12}(\Psi)}
& \pi_3(\Psi) \ar[d]^{\pi_{34}(\Psi)} \\ \pi_1(\Psi)  \ar[ur]|{\pi_{13}(\Psi)}="2" \ar@{=>}[u];"2"|{\pi_{123}(\Psi)} \ar[r]_{\pi_{14}(\Psi)}="1"
&  \ar@{<=}"1";"2"|{\pi_{134}(\Psi)} \pi_4(\Psi)}
=
\alxydim{@C=1.5cm@R=1.7cm}{\pi_2(\Psi) \ar[dr]|{\pi_{24}(\Psi)}="2" \ar[r]^{\pi_{23}(\Psi)} \ar@{<-}[d]_{\pi_{12}(\Psi)}
& \pi_3(\Psi) \ar[d]^{\pi_{34}(\Psi)}  \ar@{<=}"2";[]|{\pi_{234}(\Psi)}
 \\  \pi_1(\Psi)  \ar[r]_{\pi_{14}(\Psi)}="1"
\ar@{<=}"1";"2"|{\pi_{124}(\Psi)}& \pi_4(\Psi)\text{.}}
\end{equation*}
The other three fusion identities are analogous identities for formal inverses $\bar\Psi$ and mixtures of $\Psi$ and $\bar\Psi$.
\end{proof}

\begin{proof}[Proof of Lemma \ref{lem3} (b)]
Now let $\gamma \maps x \to y$ be a path in $M$, and let $\tilde x, \tilde y \in Y$ be lifts of the endpoints, i.e. $\pi(\tilde x)=x$ and $\pi(\tilde y)=y$.
We compare the two lifts $\tilde\gamma$ and $\tilde\gamma'$ of $\gamma$ in the following way.
Let $P  \subset M$ be the set of  points over whose fibre either $\tilde \gamma_1$ or $\tilde\gamma_2$ has a jump. The set $P$ is finite and ordered by the orientation of the path $\gamma$, so that we may put $P= \lbrace p_0,...,p_n \rbrace$ with $p_0=x$ and $p_n=y$. Now we write 
\begin{equation*}
\gamma= \gamma_n \circ ... \circ \gamma_1
\end{equation*}  
for paths $\gamma_k \maps  p_{k-1} \to p_k$. We also write $\tilde\gamma$  as a composition of lifts $\tilde \gamma_k \maps a_k \to b_k$ of $\gamma_k$ and (possibly multiple) jumps $b_k \to \alpha_{k+1}$ located over the points $p_k$;  analogously for $\tilde\gamma'$. This defines jumps $\alpha_k  \df  (a_k,a_k')$ and $\beta_k \df (b_k,b_{k}')$.
Now, over the paths $\gamma_k$, we take 2-isomorphisms
\begin{equation}
\label{app2pp_6}
\quadrat{a_k}{b_k}{a_k'}{b_k'}{\tilde\gamma_k}{\alpha_{k}}{\beta_k}{\tilde \gamma_k'}{\Theta}
\end{equation}
with $\Theta  \df  (\tilde\gamma_k,\tilde\gamma_k')$. Over the points $p_k$ we need 2-isomorphisms of the form
\begin{equation}
\label{app2pp_7}
\alxydim{@R=1.2cm@C=0.2cm}{& b_k=a_{k+1} \ar[dr]^{\alpha_{k+1}} & \\ b_k' \ar@{<-}[ru]^{\beta_k} \ar[rr]_-{}="1" && a'_{k+1} \ar@{=>}[ul];"1"|{}}
\quad\text{, }\quad
\alxydim{@R=1.2cm@C=0.2cm}{b_k \ar[dr]_{\beta_k}="1" \ar[rr] && a_{k+1} \ar[dl]^{\alpha_{k+1}}\\ & b_k' = a'_{k+1} & \ar@{=>}[u];"1"|{}}
\quad\text{ or }\quad
\quadrat{b_k}{a_{k+1}}{b_k'}{a_{k+1}'}{}{\beta_k}{\alpha_{k+1}}{}{}
\end{equation}   
the first whenever $\tilde\gamma'$ has  jumps over $p_{k}$  and $\tilde\gamma$ has not, the second whenever $\tilde\gamma$ has  jumps and $\tilde\gamma'$ has not, and the third whenever both lifts have jumps. By Lemma \ref{app2pp_lem1} these 2-isomorphisms exist and are unique.  Then, all of the four diagrams above can be put next to each other; this defines a 2-isomorphism $\tilde\gamma \Rightarrow \tilde\gamma'$. We call the 2-morphism constructed like this the canonical 2-morphism. 

It remains to show that every 2-morphism $A \maps \tilde\gamma \Rightarrow \tilde \gamma'$ with $\uppp(A)=\id_{\gamma}$ is equal to this canonical 2-morphism. First, we kill all bigons contained in $A$  by the argument given in the proof of Lemma \ref{app2pp_lem1}. We consider two cases:
\begin{enumerate}
\item 
$A$ contains no paths except those contained in $\tilde\gamma$ or $\tilde\gamma'$. In this case $A$ is already equal to the canonical 2-morphism. Namely, each of the pieces $\tilde\gamma_k$ or $\tilde\gamma'_k$ can only be target or source of a 2-morphism of type (1b). These are now necessarily  the pieces \erf{app2pp_6}. It remains to consider the 2-morphisms between the jumps. But these are by Lemma \ref{app2pp_lem1} equal to the pieces  \erf{app2pp_7}. This shows that $A$ is  the canonical 2-morphism. 

\item
There exists a path $\gamma_0$ in $\upp{\pi}{2}{M}$ which is target or source of some 2-morphism contained in $A$ but not contained in $\tilde\gamma$ or $\tilde\gamma'$. In this case there exists a 1-morphism $\tilde\gamma_o \maps \tilde x \to \tilde y$ together with 2-morphisms $A_1 \maps \tilde\gamma \Rightarrow \tilde\gamma_0$ and $A_2 \maps \tilde\gamma_0 \Rightarrow \tilde\gamma'$ such that $A=A_2 \bullet A_1$. By iteration, we can decompose $A$ in a vertical composition of 2-morphisms which fall into case 1, i.e. into a vertical composition of canonical 2-morphisms.
\end{enumerate}

It remains to conclude with the observation that the vertical composition $A_2 \bullet A_1$ of two canonical 2-morphisms is again canonical. 
\end{proof}

 To construct the 2-functor $s$
 we fix   an open cover $\lbrace U_i \rbrace_{i\in I}$ of $M$ together with smooth sections $\sigma_i \maps U_i \to Y$, we fix choices of lifts $s(p)\in Y$ for all points $p\in M$, and we  fix for every path $\gamma \maps x \to y$ in $M$ a 1-morphism $s(\gamma) \maps s(x) \to s(y)$ in $\upp{\pi}{2}{M}$. Such lifts exist according to Lemma \ref{lem3} (a). For the identity 1-morphisms $\id_x$ we  choose the identity 1-morphisms $\id_{s(x)}^{*}$. Moreover, we require $s(\gamma^{-1}) = s(\gamma)^{-1}$, meaning that $s(\gamma)^{-1}$ is the reverse order composition of the inverses $\tilde\gamma^{-1}$ of the involved paths $\tilde\gamma$, and of the inverses $\bar\alpha$ of all involved jumps $\alpha$.   These choices define $s$ on objects and 1-morphisms.

Now let $\Sigma \maps  \gamma_1 \Rightarrow \gamma_2$ be a bigon in $M$. Its image $\Sigma([0,1]^2) \subset M$ is compact and hence  covered by open  sets indexed by a \emph{finite} subset $J \subset I$. We choose a decomposition of $\Sigma$ in a vertical and horizontal composition of bigons $\lbrace \Sigma_j \rbrace_{j \in J}$ such that $\Sigma_j([0,1]^2) \subset U_j$. Then we define $s(\Sigma)$ to be composed from  2-morphisms $s(\Sigma_{j})$ in the same way as $\Sigma$ was composed from the $\Sigma_j$. It thus remains to define the 2-functor $s$ on bigons $\Sigma$ which are contained in an open set $U$ which has a section $\sigma \maps U \to Y$. 
We define for such a bigon 
\begin{equation*}
s
\quad : \quad
\bigon{x}{y}{\gamma_1}{\gamma_2}{\Sigma}
\quad \mapsto \quad
\alxy{s(x) \ar@/_4.5pc/[rrr]_{s(\gamma_2)}="4" \ar@/^4.5pc/[rrr]^{s(\gamma_1)}="0" \ar[r] & \sigma(x) \ar@/^1.5pc/[r]^{\sigma(\gamma_1)}="1"
\ar@/_1.5pc/[r]_{\sigma(\gamma_2)}="2" \ar@{=>}"1";"2"|{\sigma(\Sigma)} & \sigma(y) \ar[r] & s(y) \ar@{=>}"0";"1"|{} \ar@{=>}"2";"4"|{}}
\end{equation*}
where the unlabelled 1-morphisms are the obvious jumps, and the unlabelled 2-morphisms are the unique 2-isomorphisms from Lemma \ref{lem3} (b).

The 2-functor $s \maps  \mathcal{P}_2(M) \to \upp{\pi}{2}{M}$ whose structure we have defined above is not  strict. While its unitor is trivial because we have by definition $s(\id_x)=\id^{*}_{s(x)}$,  its compositor $c_{\gamma_1,\gamma_2} \maps  s(\gamma_2) \circ s(\gamma_1) \Rightarrow s(\gamma_2 \circ \gamma_1)$ is defined to be the unique 2-isomorphism from Lemma \ref{lem3} (b).  
All axioms for the 2-functor $s$ follow from the uniqueness of these 2-isomorphisms.

For later purpose, we note the following consequence of the definitions.

\begin{lemma}
\label{comps}
If $\gamma:x \to y$ is a path in $M$, then  the compositor $c_{\gamma,\gamma^{-1}}$ of $s$ is a composition of 2-morphisms of types (2a) and (2b), type (1d), and those 2-morphisms $\Xi \in Y^{[3]}$ of type (1c) that are in the image of $\Delta_{121}: Y^{[2]} \to Y^{[3]}:(a,b) \mapsto (a,b,a)$.
\end{lemma}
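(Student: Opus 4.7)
My plan is to unpack what the compositor $c_{\gamma,\gamma^{-1}}$ has to do and then construct an explicit 2-isomorphism built from the allowed ingredients, and finally invoke the uniqueness part of Lemma~\ref{lem3}(b) to conclude that this explicit 2-isomorphism \emph{is} the compositor.

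First I would write $s(\gamma)$ out as an alternating composition of lifts of sub-paths and of jumps, say
\[
s(\gamma) \eq \tilde\gamma_n * \alpha_n * \tilde\gamma_{n-1} * \cdots * \alpha_1 * \tilde\gamma_0,
\]
where each $\tilde\gamma_k\maps a_k\to b_k$ is a path in $Y$ and each $\alpha_k=(b_{k-1},a_k)$ is a jump. By the convention $s(\gamma^{-1})=s(\gamma)^{-1}$ fixed in Section~\ref{sec2_2}, the composition $s(\gamma^{-1})\circ s(\gamma)$ is (up to formal associators and unifiers, which are of type~(2a)) the palindromic word
\[
\tilde\gamma_0^{-1} * \bar\alpha_1 * \tilde\gamma_1^{-1} * \bar\alpha_2 * \cdots * \bar\alpha_n * \tilde\gamma_n^{-1} * \tilde\gamma_n * \alpha_n * \tilde\gamma_{n-1} * \cdots * \alpha_1 * \tilde\gamma_0.
\]

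Next I would reduce this word to $\id^{*}_{s(x)}$ by peeling pairs off the centre outward. Each innermost adjacent pair $\tilde\gamma_k^{-1}*\tilde\gamma_k$ is collapsed using the type-(2b) 2-isomorphism $c^{*}_{\tilde\gamma_k,\tilde\gamma_k^{-1}}$ to pass to $\tilde\gamma_k^{-1}\circ \tilde\gamma_k=\id_{a_k}$ in the image of~$\iota$ and then using $u^{*}_{a_k}$ to arrive at $\id_{a_k}^{*}$. Each adjacent pair $\bar\alpha_k*\alpha_k$ with $\alpha_k=(b_{k-1},a_k)$ is collapsed using the type-(1c) 2-isomorphism $\Xi=(a_k,b_{k-1},a_k)=\Delta_{121}(a_k,b_{k-1})$, which lies in the prescribed image and whose source is precisely $\pi_{23}(\Xi)*\pi_{12}(\Xi)=(b_{k-1},a_k) * (a_k,b_{k-1}) = \bar\alpha_k*\alpha_k$ with target $\pi_{13}(\Xi)=(a_k,a_k)$; the trivial jump $(a_k,a_k)$ is then replaced by $\id_{a_k}^{*}$ via $\Delta_{a_k}^{-1}$, which is of type~(1d). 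In both cases, after the collapse the neighbouring factors $\id^{*}$ are absorbed by the formal unifiers~$l^{*}$, $r^{*}$, which are of type~(2a), and the remaining word is re-bracketed by formal associators, again type~(2a). Iterating this pair-by-pair reduction gives a 2-isomorphism
\[
S\maps s(\gamma^{-1}) \circ s(\gamma) \Rightarrow \id^{*}_{s(x)}
\]
whose constituents all belong to the four prescribed classes.

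Now I would verify that $\uppp(S)=\id_{\id_x}$: each type-(2b) building block is sent by $\uppp$ to the corresponding compositor/unitor of $\pi_*$ applied to a path pair $(\tilde\gamma_k,\tilde\gamma_k^{-1})$, whose image composes to $\id_{\id_{\pi(a_k)}}$, while all the remaining building blocks (jumps, $\Delta$'s, associators and unifiers of the formal composition) project to identities by construction of~$\uppp$. Together with the observation that $\uppp(s(\gamma^{-1})\circ s(\gamma))=\gamma^{-1}\circ\gamma=\id_x$, this puts $S$ into the hypothesis of the uniqueness statement in Lemma~\ref{lem3}(b). Since $c_{\gamma,\gamma^{-1}}$ is, by its very definition in Section~\ref{sec2_2}, the unique such 2-isomorphism, we conclude $S=c_{\gamma,\gamma^{-1}}$, which is the claim.

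I do not expect a serious obstacle here: the only thing to be careful about is that when collapsing the jump pair one never needs a general element of $Y^{[3]}$ but only those $(a,b,a)$, and that one never appeals to 2-morphisms of types (1a) (bigons in $Y$) or (1b) (paths in $Y^{[2]}$); this is visible from the construction since each elementary collapse is internal to the fibre over a single point of~$M$. The pairing-off is combinatorial rather than delicate, so the content is almost entirely the bookkeeping of which morphism type gets used at which step, together with the uniqueness invocation.
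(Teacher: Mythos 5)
Your proof is correct and fills in exactly the argument the paper leaves implicit (the lemma is stated there as a ``consequence of the definitions'' with no written proof): since the compositor is \emph{defined} as the unique 2-isomorphism from Lemma \ref{lem3}(b), exhibiting any 2-isomorphism $s(\gamma^{-1})\circ s(\gamma)\Rightarrow \id^{*}_{s(x)}$ built from types (2a), (2b), (1d) and $\Delta_{121}$-image type (1c) pieces, with identity projection under $\uppp$, settles the claim by uniqueness. The only cosmetic point is that $s(\gamma)$ may contain several consecutive jumps over a single point of $M$ rather than strictly alternating with paths, but your centre-outward palindrome peeling handles an arbitrary word of basic 1-morphisms unchanged, since the innermost adjacent pair is always $\bar\beta*\beta$ for a single basic $\beta$.
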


Now we can proceed with the remaining proof of the main result of this  section.

\begin{proof}[Proof of Proposition \ref{prop2}]
By construction we have $\uppp \circ s = \id_{\mathcal{P}_2(M)}$. It remains to construct a \pe
\begin{equation*}
\zeta \maps  s \circ \uppp \to \id_{\upp{\pi}{2}{M}}\text{.}
\end{equation*}
We define $\zeta$ on both basic   1-morphisms. Its component at a path  is
\begin{equation*}
\zeta \quad \maps \quad \alxydim{}{a \ar[r]^{\gamma} & b}
\quad\mapsto\quad
\quadrat{s(\pi(a))}{s(\pi(b))}{a}{b}{s(\pi_{*}\gamma)}{}{}{\gamma}{}
\end{equation*}
where the unlabelled 1-morphisms are again the obvious jumps, and the 2-isomorphism is the unique one. If $s(\pi_{*}\gamma)$ happens to be just a path, this 2-isomorphism is just of type (1b).
The component of $\zeta$ at a jump is
\begin{equation*}
\zeta \quad \maps \quad \alxydim{}{\pi_1(\alpha) \ar[r]^{\alpha} & \pi_2(\alpha)}
\quad\mapsto\quad
\dreieck{\pi_1(\alpha)}{s(x)}{\pi_2(\alpha)}{}{}{}{}
\end{equation*}
with $x \df \pi(\pi_1(\alpha))=\pi(\pi_2(\alpha))$; this is just one 2-isomorphism of type (1c). For some general 1-morphism, $\zeta$ puts the 2-isomorphisms above next to each other; this way axiom (T1) is automatically satisfied. Axiom (T2) follows again from the uniqueness of the 2-morphisms we have used.

In order to show that $\zeta$ is invertible we need to find another \pt\ $\xi  \maps  \id_{\upp{\pi}{2}{M}} \to s \circ \uppp$ together with invertible modifications $i_{\zeta} \maps  \xi \circ \zeta \Rightarrow \id_{s \circ \uppp }$ and $j_{\zeta} \maps  \id_{\id_{\upp{\pi}{2}{M}}}\Rightarrow\zeta\circ \xi$ that satisfy the \zickzack identities. The \pt\ $\xi$ can be defined in the same way as $\zeta$ just by turning the diagrams upside down, using the formal inverses.  The modifications $i_{\zeta}$ and $j_\zeta$ assign to a point $a\in Y$ the  2-isomorphisms
\begin{equation*}
\alxydim{@R=0.3cm}{&a \ar@/^/[dr]^{\xi(a)} &\\s(\pi(a)) \ar[rr]|{\Delta(s(\pi(a)))}="1" \ar@/_1.9pc/[rr]_{\id^{*}_{s(\pi(a))}}="2" \ar@{=>}[ur];"1"|{} \ar@{=>}"1";"2" \ar@/^/[ru]^{\zeta(a)}&&s(\pi(a))}
\quad\text{ and }\quad
\alxydim{@R=0.3cm}{a \ar[rr]|{\Delta(a)}="1" \ar@/^1.9pc/[rr]^{\id^{*}_a}="2" \ar@{=>}"1";[dr]|{} \ar@{=>}"2";"1" \ar@/_/[rd]_-{\zeta(a)}&&a\\&s(\pi(a)) \ar@/_/[ur]_-{\xi(a)} &}
\end{equation*}
that combine 2-isomorphisms of type (1c) and (1d). The zigzag identities are satisfied due to the uniqueness of 2-isomorphisms we have used.
\end{proof}

\begin{corollary}
\label{co1}
The section 2-functor $s \maps \mathcal{P}_2(M) \to \upp{\pi}{2}{M}$ is independent
(up to \pe) of all choices, namely the choice of lifts of points and 1-morphisms, the choice of the open cover, and the choice of local sections.
\end{corollary}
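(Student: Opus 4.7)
The plan is to reduce the corollary to the general 2-categorical principle that any two weak inverses of an equivalence are pseudonaturally equivalent. The crucial observation is that, by construction, every section 2-functor $s$ built by the recipe of Section \ref{sec2_2} satisfies $\uppp \circ s = \id_{\mathcal{P}_2(M)}$ \emph{strictly}, and Proposition \ref{prop2} exhibits such an $s$ as a weak inverse of $\uppp$ via a \pe\ $\zeta \maps s \circ \uppp \to \id_{\upp{\pi}{2}{M}}$. Nowhere in the production of either $s$ or $\zeta$ is any specific feature of the choices (cover, local sections, lifts of points, lifts of paths) actually used beyond what is intrinsic to the recipe itself.

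Concretely, suppose $s, s' \maps \mathcal{P}_2(M) \to \upp{\pi}{2}{M}$ are two section 2-functors coming from two different systems of choices. I would take the pseudonatural equivalence $\zeta' \maps s' \circ \uppp \to \id_{\upp{\pi}{2}{M}}$ provided by Proposition \ref{prop2} for $s'$ and whisker it on the right with $s$ to obtain a pseudonatural transformation
\begin{equation*}
\zeta' \ast \id_s \maps (s' \circ \uppp) \circ s \to \id_{\upp{\pi}{2}{M}} \circ s\text{.}
\end{equation*}
By strict associativity of 2-functor composition together with the strict identity $\uppp \circ s = \id_{\mathcal{P}_2(M)}$, the source rewrites as $s' \circ (\uppp \circ s) = s'$ while the target rewrites as $s$, so $\zeta' \ast \id_s$ is really a pseudonatural transformation $s' \to s$. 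Since whiskering a pseudonatural equivalence by a 2-functor yields again a pseudonatural equivalence, this exhibits the desired \pe\ $s \cong s'$.

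The main input is Proposition \ref{prop2} together with its underlying technical Lemma \ref{lem3}, both of which apply uniformly regardless of the choices. There is accordingly no genuine obstacle to overcome: the corollary merely records the familiar uniqueness of weak inverses in a 2-category, combined with the fact that the construction of $s$ was arranged to be strict enough on the right so that $\uppp \circ s = \id_{\mathcal{P}_2(M)}$ holds on the nose rather than only up to coherent isomorphism.
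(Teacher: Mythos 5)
Your argument is correct and is essentially the paper's own proof, which simply invokes the fact that two weak inverses of a 1-morphism in a 2-category are automatically 2-isomorphic; your whiskering computation $s' = s'\circ(\uppp\circ s) = (s'\circ\uppp)\circ s \cong \id\circ s = s$ is exactly the standard verification of that fact, made slightly cleaner by the strict identity $\uppp\circ s = \id_{\mathcal{P}_2(M)}$.
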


This follows from the fact that two weak inverses of a 1-morphism in a 2-category are automatically 2-isomorphic.

\subsection{Pairing with Descent Data}

\label{sec2_3}

In this section we relate the \codescent\ $\upp{\pi}{2}{M}$ to the descent 2-category $\trans{i}{\mathrm{Gr}}{2}$ defined in Section \ref{sec:des} in terms of a strict 2-functor 
\begin{equation*}
R \maps  \trans{i}{\mathrm{Gr}}{2} \to \mathrm{Funct}(\upp{\pi}{2}{M},T)\text{.}
\end{equation*}
This 2-functor expresses that the 2-groupoid $\upp{\pi}{2}{M}$  is \quot{$T$-dual} to the  descent 2-category; this justifies the term \emph{co}descent 2-groupoid.

The 2-functor $R$ labels the structure of the codescent 2-groupoid by  descent data in a natural way. To start with, let $(\mathrm{triv},g,\psi,f)$ be a descent object. Its image under $R$ is a 2-functor 
\begin{equation*}
R_{(\mathrm{triv},g,\psi,f)} \maps  \upp{\pi}{2}{M} \to T
\end{equation*}
 defined as follows. 
To an object
$a\in Y$ it assigns the object $\mathrm{triv}_i(a)$ in $T$.
On basic 1-morphisms it is defined by the following assignments:
\begin{eqnarray*}
\alxydim{}{a \ar[r]^{\gamma} & a'}
&\quad\mapsto\quad&
\alxydim{@C=1.5cm}{\mathrm{triv}_i(a) \ar[r]^{\mathrm{triv}_i(\gamma)} & \mathrm{triv}_i(a')}
\\[-1cm]\hspace{4cm}&&\hspace{8cm}
\end{eqnarray*}
\begin{eqnarray*}
\alxydim{}{\pi_1(\alpha) \ar[r]^{\alpha} & \pi_2(\alpha)}
&\quad\mapsto\quad&
\alxydim{@C=1.5cm}{\pi_1^{*}\mathrm{triv}_i(\alpha) \ar[r]^{g(\alpha)} & \pi_2^{*}\mathrm{triv}_i(\alpha)\text{.}}
\\[-0.3cm]\hspace{4cm}&&\hspace{8cm}
\end{eqnarray*}
To a formal composition of basic 1-morphisms it assigns the composition of the respective images, and to the formal identity $\id_a^{*}$ at a point $a\in Y$ it assigns $\id_{\mathrm{triv}_i(a)}$. 
On the basic  2-morphisms of essential types (1a) to (1d)  it is defined by the following assignments:
\begin{eqnarray*}
\bigon{a}{b}{\gamma_1}{\gamma_2}{\Sigma}
&\quad \mapsto \quad&
\bigon{\mathrm{triv}_i(a)}{\mathrm{triv}_i(b)}{\mathrm{triv}_i(\gamma_1)}{\mathrm{triv}_i(\gamma_2)}{\mathrm{triv}_i(\Sigma)}
\\[-0.8cm]\hspace{6cm}&&\hspace{7cm}
\end{eqnarray*}

\begin{eqnarray*}
\alxydim{@C=0.8cm@R=1.2cm}{\pi_1(\alpha) \ar[d]_{\alpha} \ar[r]^{\pi_1(\Theta)} & \pi_1(\alpha') \ar[d]^{\alpha'}
\ar@{=>}[dl]|{\Theta} \\ \pi_2(\alpha) \ar[r]_{\pi_2(\Theta)} & \pi_2(\alpha')}
&\quad \mapsto \quad&
\alxydim{@C=1.2cm@R=1.2cm}{\pi_1^{*}\mathrm{triv}_i(\alpha) \ar[d]_{g(\alpha)} \ar[r]^{\pi_1^{*}\mathrm{triv}_i(\Theta)} & \pi_1^{*}\mathrm{triv}_i(\alpha') \ar[d]^{g(\alpha')}
\ar@{=>}[dl]|{g(\Theta)} \\ \pi_2^{*}\mathrm{triv}_i(\alpha) \ar[r]_{\pi_2^{*}\mathrm{triv}_i(\Theta)} & \pi_2^{*}\mathrm{triv}_i(\alpha')}
\\[-0.8cm]\hspace{6cm}&&\hspace{7cm}
\end{eqnarray*}

\begin{eqnarray*}
\dreieck{\pi_1(\Xi)}{\pi_2(\Xi)}{\pi_{3}(\Xi)}{\pi_{12}(\Xi)}{\pi_{23}^{*}(\Xi)}{\pi_{13}(\Xi)}{\Xi}
&\quad \mapsto \quad&
\alxydim{@C=-0.5cm@R=1.2cm}{& \pi_2^{*}\mathrm{triv}_i(\Xi) \ar[dr]^{\pi_{23}^{*}g^{*}(\Xi)}& \\ \pi_1^{*}\mathrm{triv}_i(\Xi) \ar[ur]^{\pi_{12}^{*}g(\Xi)} \ar[rr]_{\pi_{13}^{*}g(\Xi)}="1" && \pi_{3}^{*}\mathrm{triv}_i(\Xi) \ar@{=>}[ul];"1"|{f(\Xi)}}
\\[-0.8cm]\hspace{6cm}&&\hspace{7cm}
\end{eqnarray*}

\begin{eqnarray*}
\alxydim{}{\id_a^{*} \ar@{=>}[r]^-{\Delta_a} & \Delta(a)}
&\quad \mapsto \quad&
\alxydim{}{\id_{\mathrm{triv}_i(a)} \ar@{=>}[r]^-{\psi(a)} & \Delta^{*}g(a)\text{.}}
\\[-0.3cm]\hspace{6cm}&&\hspace{7cm}
\end{eqnarray*}
To the basic 2-morphisms of technical type (2a) it assigns associators and \unifier s of the 2-category $T$. To those of type (2b) it assigns unitors and compositors of the 2-functor $i$, i.e.
\begin{eqnarray*}
\alxydim{}{\id_a \ar@{=>}[r]^{u_a^{*}} & \id_a^{*}}
&\;\mapsto\;&
\alxydim{@C=1.5cm}{\mathrm{triv}_i(\id_{a}) \ar@{=>}[r]^{u^i_{\mathrm{triv}(a)}} & \id_{\mathrm{triv}_i(a)}}
\\[-1cm]\hspace{4cm}&&\hspace{9cm}
\end{eqnarray*}
\begin{eqnarray*}
\alxydim{@C=1cm}{\gamma_2 *\gamma_1 \ar@{=>}[r]^{c^{*}_{\gamma_1,\gamma_2}} & \gamma_2 \circ \gamma_1 }
&\;\mapsto\;&
\alxydim{@C=2.3cm}{\mathrm{triv}_i(\gamma_2) \circ \mathrm{triv}_i(\gamma_1) \ar@{=>}[r]^-{c^i_{\mathrm{triv}(\gamma_1),\mathrm{triv}(\gamma_2)}} & \mathrm{triv}_i(\gamma_2 \circ \gamma_1)\text{.} }
\\[-0.3cm]\hspace{4cm}&&\hspace{9cm}
\end{eqnarray*}
Finally, some  formal horizontal and vertical composition of 2-morphisms is assigned to the composition of the images of the respective basic 2-morphisms, the formal horizontal composition replaced by the horizontal composition $\circ$ of $T$, and the formal vertical composition replaced by the vertical composition $\bullet$ of $T$.

By construction, all these assignments are well-defined under the identifications we have declared under the 2-morphisms of $\upp{\pi}{2}{M}$:   
\begin{itemize}
\item 
They are well-defined under the identifications (I) due to the axioms of the 2-category $T$.
\item
They are well-defined under identifications (II) due to the axioms of the 2-functors $\mathrm{triv}$ and $i$.
\item
They are well-defined under identifications (III) due to the axioms of the \pt\ $g$.

\item
They are well-defined under identifications (IV) due to the axioms of the modifications $\psi$ and $f$.
\item
They are well-defined under the identifications (V) because these are explicitly assumed in the definition of descent objects, see diagrams \erf{2} and \erf{3}. 
\end{itemize}
We have now defined the 2-functor $R_{(\mathrm{triv},g,\psi,f)}$ on descent objects, 1-morphisms and 2-morphisms. Since for all points $a\in Y$ 
\begin{equation*}
R_{(\mathrm{triv},g,\psi ,f)}(\id_a^{*} ) = \id_{\mathrm{triv}_i(a)} = \id_{R_{(\mathrm{triv},g,\psi ,f)}(a)}\text{,} 
\end{equation*}
it has a trivial unitor. Furthermore,
\begin{equation*}
R_{(\mathrm{triv},g,\psi ,f)}(\gamma) \circ R_{(\mathrm{triv},g,\psi ,f)}(\beta) = R_{(\mathrm{triv},g,\psi ,f)}(\gamma * \beta)
\end{equation*}
for all composable 1-morphisms $\beta$ and $\gamma$ of any type, so that it also has a trivial compositor. Hence, the 2-functor $R_{(\mathrm{triv},g,\psi ,f)}$ is strict, and it is straightforward to see that the remaining axioms (F1) and (F2) are satisfied.

So far we have introduced a 2-functor associated to each descent object. Let us now introduce a \pt\
\begin{equation*}
R_{(h,\varepsilon)} \maps  R_{(\mathrm{triv},g,\psi,f)} \to R_{(\mathrm{triv}',g',\psi',f')}
\end{equation*} 
associated to a descent 1-morphism
\begin{equation*}
(h,\varepsilon) \maps (\mathrm{triv},g,\psi ,f) \to (\mathrm{triv}',g',\psi ',f')\text{.}
\end{equation*}
Its definition is as natural as the one of the 2-functor given before.
Its component at an object $a\in Y$ is the 1-morphism 
\begin{equation*}
h(a) \maps \mathrm{triv}_i(a) \to \mathrm{triv}_i'(a)\text{.}
\end{equation*}
Its components at basic 1-morphisms are given by the following assignments:
\begin{eqnarray*}
\alxy{a \ar[r]^-{\gamma} & a'}
&\quad \mapsto \quad&
\alxydim{@C=1.2cm@R=1.2cm}{\mathrm{triv}_i(a) \ar[d]_{h(a)} \ar[r]^{\mathrm{triv}_i(\gamma)}
& \mathrm{triv}_i(a') \ar@{=>}[dl]|{h(\gamma)} \ar[d]^{h(a')}
\\ \mathrm{triv}_i'(a) \ar[r]_{\mathrm{triv}_i'(\gamma)}
& \mathrm{triv}_i'(a')}
\\[-0.8cm]\hspace{5cm}&&\hspace{7cm}
\end{eqnarray*}

\begin{eqnarray*}
\alxydim{@R=1cm@C=0.8cm}{\pi_1(\alpha) \ar[r]^-{\alpha}
& \pi_2(\alpha)}
&\quad \mapsto \quad&
\alxydim{@C=1.2cm@R=1.2cm}{\pi_1^{*}\mathrm{triv}_i(\alpha)
\ar[d]_{\pi_1^{*}h(\alpha)}
\ar[r]^{g(\alpha)} & \pi_2^{*}\mathrm{triv}_i(\alpha)
\ar@{=>}[dl]|{\varepsilon(\alpha)}
\ar[d]^{\pi_2^{*}h(\alpha)}
\\ \pi_1^{*}\mathrm{triv}_i'(\alpha)
\ar[r]_{g'(\alpha)} & \pi_2^{*}\mathrm{triv}'_i(\alpha)}
\\[-0.3cm]\hspace{5cm}&&\hspace{7cm}
\end{eqnarray*}
For compositions of 1-morphisms, $R_{(h,\varepsilon)}$ puts the diagrams for the involved basic 1-morphisms next to each other. For example, to a composition $\gamma*\alpha$ of a jump $\alpha=(x,y)$ with a path $\gamma \maps y \to z$ it assigns the 2-isomorphism
\begin{equation*}
h(z) \circ (\mathrm{triv}_i(\gamma) \circ g(\alpha)) \Rightarrow (\mathrm{triv}_i'(\gamma)\circ g(\alpha)) \circ h(x)
\end{equation*}
which is (up to the obvious associators) obtained by first applying $h(\gamma)$ and then $\varepsilon(\alpha)$.  This way, axiom (T1) for the \pt\ $R_{(h,\varepsilon)}$, namely the compatibility with the composition of 1-morphisms, is automatically satisfied. It remains to prove the following. 

\begin{lemma}
The assignments $R_{(h,\varepsilon)}$ are compatible with the 2-morphisms of the codescent 2-groupoid in the sense of axiom (T2).
\end{lemma}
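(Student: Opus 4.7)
The plan is to verify axiom (T2) by cases on the type of 2-morphism in the codescent 2-groupoid. Since every 2-morphism in $\upp{\pi}{2}{M}$ is built from the six basic types by horizontal and vertical composition, and since axiom (T2) is manifestly closed under both compositions, it suffices to check compatibility of $R_{(h,\varepsilon)}$ with each of the basic types (1a)--(1d) and (2a)--(2b) separately.

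For type (1a) bigons $\Sigma\maps\gamma_1\Rightarrow\gamma_2$ in $Y$, the images under both $R_{(\mathrm{triv},g,\psi,f)}$ and $R_{(\mathrm{triv}',g',\psi',f')}$ lie entirely in the sub-2-functors $\mathrm{triv}_i$ and $\mathrm{triv}'_i$, and the component of $R_{(h,\varepsilon)}$ at a path $\gamma$ is by definition $h(\gamma)$; the required diagram is thus exactly (T2) for the \pt\ $h\maps\mathrm{triv}_i\to\mathrm{triv}'_i$. For type (1b) 2-morphisms $\Theta\maps\alpha'*\pi_1(\Theta)\Rightarrow\pi_2(\Theta)*\alpha$, the relevant naturality square reduces, after inserting the definitions of the components at jumps (given by $\varepsilon$) and at paths (given by $\pi_j^{*}h$), to the modification axiom (\ref{app2cat_2}) for $\varepsilon\maps\pi_2^{*}h\circ g\Rightarrow g'\circ\pi_1^{*}h$, combined with two instances of (T2) for the \pt s $\pi_j^{*}h$.

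For the two essential 2-morphisms with ``structural'' character, (T2) is exactly the two coherence diagrams built into the notion of descent 1-morphism: at a point $\Xi\in Y^{[3]}$ of type (1c) the pasting identity for $R_{(h,\varepsilon)}$ is precisely diagram (\ref{5}) evaluated at $\Xi$, and at a point $a\in Y$ of type (1d) it is precisely diagram (\ref{4}) evaluated at $a$. This is the core of the lemma and the reason for the shape of the axioms in the definition of a descent 1-morphism. For type (2a) (formal associators and \unifier s), the images under $R_{(\mathrm{triv},g,\psi,f)}$ are genuine associators and \unifier s of $T$, and (T2) reduces to their naturality with respect to horizontal composition in $T$. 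For type (2b) (the 2-isomorphisms $u^{*}_a$ and $c^{*}_{\gamma_1,\gamma_2}$), the images are the unitors and compositors of the 2-functor $i$ applied to $\mathrm{triv}$ respectively $\mathrm{triv}'$, and (T2) follows from axioms (F3)--(F4) of $i$ together with the unitor/compositor compatibility of the \pt\ $h$.

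The ``hard'' step is really the bookkeeping for type (1b), where one must paste together $\varepsilon(\alpha)$, $\varepsilon(\alpha')$, the modification axiom for $\varepsilon$ applied to the path $\Theta$ in $\mathcal{P}_1(Y^{[2]})$, and several associators coming from the formal composition $*$; but this is precisely what identification (III) in the construction of $\upp{\pi}{2}{M}$ forces on the \pt\ $\Gamma$, so after unravelling the definitions the identity is automatic. All other cases are either encoded directly in an axiom of the descent data or follow from the 2-category axioms of $T$.
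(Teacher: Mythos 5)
Your proof is correct and follows essentially the same route as the paper: a case-by-case check over the basic 2-morphism types, reducing (1a) to (T2) for $h$, (1b) to the modification axiom for $\varepsilon$ together with (T2) for $h$, (1c) and (1d) to diagrams (\ref{5}) and (\ref{4}), and the technical types (2a) and (2b) to naturality of the associators and \unifier s of $T$ and of the compositors and unitors of $i$. The only minor imprecision is attributing the type (2b) case to axioms (F3)--(F4) of $i$ rather than to the naturality of its compositors and unitors, but this does not affect the argument.
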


\begin{proof}
We check the compatibility separately for each basic 2-morphism. For the essential 2-morphisms it comes from the following properties of the descent 1-morphism $(h,\varepsilon)$:
\begin{itemize}
\item For type (1a) it comes from axiom (T2) for the \pt\ $h$.
\item For type (1b) it comes from the axiom for the modification   $\varepsilon$ and from axiom (T2) for the \pt\ $h$.
\item For types (1c) and (1d) it comes from the conditions \erf{5} and \erf{4} on the descent 1-morphism $(h,\varepsilon)$.

\end{itemize}
For the technical 2-morphisms it comes from properties of the 2-category $T$ and the one of the 2-functor $i$:
for type (2a) it is satisfied because the associators and \unifier s of $T$ are natural, and for type (2b) it is satisfied because the compositors and unitors of $i$ are natural.
\end{proof}

We have now described a 2-functor associated to each descent object and a \pt\ associated to each descent 1-morphism. Now let $(\mathrm{triv},g,\psi,f)$ and $(\mathrm{triv}',g',\psi',f')$ be descent objects and let $(h_1,\varepsilon_1)$ and $(h_2,\varepsilon_2)$ be two descent 1-morphisms between these. For a descent 2-morphism $E \maps (h_1,\varepsilon_1) \Rightarrow (h_2,\varepsilon_2)$
we  introduce now a  modification
\begin{equation*}
R_E \maps  R_{(h_1,\varepsilon_1)}\Rightarrow R_{(h_2,\varepsilon_2)}\text{.}
\end{equation*}
Its component at an object $a\in Y$ is the 2-morphism $E(a) \maps  h_1(a) \Rightarrow h_2(a)$. The axiom for $R_E$, the compatibility with 1-morphisms, is satisfied for paths because $E$ is a modification, and for jumps because of the diagram \erf{6} in the definition of descent 2-morphisms.

It is now straightforward to see the following statement.

\begin{proposition}
The assignments defined above furnish a strict 2-functor
\begin{equation*}
R \maps  \trans{i}{\mathrm{Gr}}{2} \to \mathrm{Funct}(\upp{\pi}{2}{M} , T)\text{.}
\end{equation*} 
\end{proposition}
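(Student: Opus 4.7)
The plan is to verify the four strict 2-functor axioms (F1)--(F4) for $R$. The well-definedness on the underlying 0-, 1-, and 2-cells of $\trans{i}{\mathrm{Gr}}{2}$ has essentially already been established: each $R_{(\mathrm{triv},g,\psi,f)}$ is a strict 2-functor by the bullet list of compatibilities with identifications (I)--(V); each $R_{(h,\varepsilon)}$ is a \pt\ by the preceding lemma; and each $R_E$ is a modification by the brief computation given just above the proposition. What remains is functoriality: preservation of the identity and composition of descent 1-morphisms, and preservation of the identity, vertical, and horizontal compositions of descent 2-morphisms. Strictness, rather than pseudofunctoriality, is claimed because each $R_{(\mathrm{triv},g,\psi,f)}$ has trivial unitor and compositor.

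The basic strategy is to exploit the presentation of $\upp{\pi}{2}{M}$: since its 1-morphisms are freely generated from paths and jumps (modulo (I)--(V)), a \pt\ out of $\upp{\pi}{2}{M}$ is determined by its object components together with its components at the two basic types of 1-morphisms, and a modification is determined by its object components alone. So in each equality to be verified, I reduce to checking componentwise on the basic generators. For the identity descent 1-morphism on $(\mathrm{triv},g,\psi,f)$---whose \pt\ is $\id_{\mathrm{triv}_i}$ and whose defining modification is the canonical one built from the \unifier s $r_g$ and $l_g$---the image under $R$ has the identity object components $\id_{\mathrm{triv}_i(a)}$, while the path and jump components reduce via the definition of the horizontal unit to identity 2-morphisms in $T$. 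This matches the identity \pt\ of $R_{(\mathrm{triv},g,\psi,f)}$ on the nose. For the composition $(h_2,\varepsilon_2)\circ(h_1,\varepsilon_1)$, the image under $R$ has object components $h_2(a)\circ h_1(a)$; at a path $\gamma$ the component is the horizontal pasting of $h_1(\gamma)$ with $h_2(\gamma)$; and at a jump $\alpha$ the component is the specific pasting of $\varepsilon_1(\alpha)$ and $\varepsilon_2(\alpha)$ appearing in the definition of the composition of descent 1-morphisms. All three match the corresponding components of the horizontal composition $R_{(h_2,\varepsilon_2)}\circ R_{(h_1,\varepsilon_1)}$ in $\mathrm{Funct}(\upp{\pi}{2}{M},T)$.

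For 2-morphisms the situation is simpler: a descent 2-morphism is exactly a modification, and its image under $R$ has object components $E(a)$ for $a\in Y$. Since vertical and horizontal composition of modifications are defined componentwise in $T$, and the same is true for the image modifications $R_E$, preservation of both compositions together with identities follows immediately. I expect the only real obstacle to be the bookkeeping needed in the case of composition of 1-morphisms: one must carefully match the associators in the pasting that defines the composite modification of $(h_2,\varepsilon_2)\circ(h_1,\varepsilon_1)$ with the associators appearing in the horizontal composition of \pt s in $\mathrm{Funct}(\upp{\pi}{2}{M},T)$. This is a finite diagram chase in $T$, and it collapses without difficulty because $R_{(\mathrm{triv},g,\psi,f)}$ and $R_{(\mathrm{triv}',g',\psi',f')}$ are both strict, so all associators inside these pastings land on identities. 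Once this matching is verified the remaining axioms (F1)--(F4) hold by construction, and $R$ is a strict 2-functor as claimed.
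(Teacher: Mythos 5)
Your proposal is correct and matches the paper's treatment: the paper states this proposition without proof, remarking only that it is \quot{straightforward to see} after the preceding constructions and lemmas, and your componentwise verification of functoriality on the generating 1- and 2-morphisms of $\upp{\pi}{2}{M}$ is exactly the omitted routine check. One small imprecision: in the composition-of-1-morphisms step the associators of $T$ need not be identities (only the compositors of the 2-functors $R_{(\mathrm{triv},g,\psi,f)}$ are trivial); the two pastings agree because, thanks to that strictness, they involve the same associators of $T$ at literally the same 1-morphisms, not because those associators vanish.
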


The 2-functor $R$  \quot{represents} the descent 2-category in a 2-category of 2-functors; in fact in a faithful way. We recall from Section \ref{sec:des} that there is a 2-functor $V \maps  \trans{i}{\mathrm{Gr}}{2} \to \mathrm{Funct}(\mathcal{P}_2(Y),T)$ which is also a representation of the same kind (but not faithful). The relation between these two representations is the following observation.

\begin{lemma}
\label{lem1}
$\iota^{*} \circ R =V$, where $\iota^{*}$ denotes the composition with $\iota \maps  \mathcal{P}_2(Y) \to \upp{\pi}{2}{M}$.
\end{lemma}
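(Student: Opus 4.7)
The plan is to verify $\iota^{*} \circ R = V$ cell by cell. Since both sides are strict 2-functors out of $\trans{i}{\mathrm{Gr}}{2}$, I would check agreement separately on each descent object, each descent 1-morphism, and each descent 2-morphism. The key observation I would invoke throughout is that $R$ was constructed so that on the image of $\iota$---that is, on paths (type (1)) and bigons (type (1a)) and their compositions---it reproduces the recipe for $V$ verbatim, while the content of $R$ that is peculiar to the codescent 2-groupoid (jumps, and the descent data $g$, $\psi$, $f$) lives entirely outside the image of $\iota$ and therefore plays no role.

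First I would fix a descent object $(\mathrm{triv},g,\psi,f)$ and compare the composite strict 2-functor $R_{(\mathrm{triv},g,\psi,f)} \circ \iota \maps \mathcal{P}_2(Y) \to T$ with $V(\mathrm{triv},g,\psi,f) = \mathrm{triv}_i = i \circ \mathrm{triv}$. On objects, paths and bigons the assignments read off directly from the definition of $R$: $a \mapsto \mathrm{triv}_i(a)$, $\gamma \mapsto \mathrm{triv}_i(\gamma)$, $\Sigma \mapsto \mathrm{triv}_i(\Sigma)$. For the unitor and compositor of the composite 2-functor, the only contribution comes from those of $\iota$---the 2-isomorphisms $u^*_a$ and $c^*_{\gamma_1,\gamma_2}$ of type (2b)---because $R$ itself is strict. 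By the definition of $R$ on type-(2b) cells these are sent to $u^i_{\mathrm{triv}(a)}$ and $c^i_{\mathrm{triv}(\gamma_1),\mathrm{triv}(\gamma_2)}$, and since $\mathrm{triv}$ is strict these are precisely the unitor and compositor of $i \circ \mathrm{triv}$. Thus the two strict 2-functors coincide on the nose.

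Next, for a descent 1-morphism $(h,\varepsilon)$, whiskering the \pt\ $R_{(h,\varepsilon)}$ with $\iota$ produces the \pt\ whose component at $a \in Y$ is $h(a)$ and whose naturality 2-cell at a path $\gamma \maps a \to a'$ in $Y$ is $h(\gamma)$; these are exactly the data of the \pt\ $V(h,\varepsilon)=h$. Likewise, for a descent 2-morphism $E$, the component of the modification $R_E$ at $\iota(a)=a$ is $E(a)$, matching $V(E)=E$.

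This is essentially a bookkeeping exercise, and I do not expect any substantial obstacle. The one point that merits attention is the compositor/unitor match in the first step; it hinges on $R$ being strict and on $\mathrm{triv}$ being strict, so that the compositor of $i \circ \mathrm{triv}$ equals $c^i$ whiskered by $\mathrm{triv}$, which is exactly the image under $R$ of $\iota$'s compositor $c^*$. Once this is noted, the remaining checks are immediate from the definitions of $R$ and $V$.
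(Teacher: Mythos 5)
Your verification is correct and is precisely the unpacking of definitions that the paper has in mind: the lemma is stated there as an immediate "observation" with no written proof, since $R$ restricted along $\iota$ reproduces the recipe for $V$ on the nose. Your attention to the compositor/unitor match (using strictness of $R$ and of $\mathrm{triv}$, so that $R(c^{*})=c^{i}$ whiskered by $\mathrm{triv}$ is the compositor of $i\circ\mathrm{triv}$) is the one point worth spelling out, and you handle it correctly.
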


From this point of view, the codescent 2-groupoid enlarges the path 2-groupoid $\mathcal{P}_2(Y)$ by additional 1-morphisms (the jumps) and additional 2-morphisms in such a way that it carries a faithful representation of the descent 2-category.

Now we put the two main aspects of the codescent 2-groupoid together, namely the representation 2-functor $R$, and its equivalence with the path 2-groupoid in terms of the  section 2-functor $s$:  the \emph{reconstruction 2-functor} 
\begin{equation*}
\mathrm{Rec}_{\pi}: \trans{i}{\mathrm{Gr}}{2} \to \loctrivfunct{i}{\mathrm{Gr}}{2}
\end{equation*}
is defined to be   the composition
\begin{equation*}
\alxydim{@C=1.1cm}{\trans{i}{\mathrm{Gr}}{2} \ar[r]^-{R} & \mathrm{Funct}(\upp{\pi}{2}{M},T)
\ar[r]^{s^{*}} & \mathrm{Funct}(\mathcal{P}_2(M),T)}\text{.}
\end{equation*}
Here, $s^{*}$
is the composition
with $s$. According to Corollary \ref{co1}, the reconstruction 2-functor is (up to pseudonatural equivalence) canonically attached to the surjective submersion $\pi \maps Y \to M$ and the 2-functor $i \maps \mathrm{Gr} \to T$.

In order to show that the reconstruction ends in the sub-2-category $\loctrivfunct{i}{\mathrm{Gr}}{2}$ of $\mathrm{Funct}(\mathcal{P}_2(M),T)$ it remains to equip, for each descent object $(\mathrm{triv},g,\psi,f)$, the reconstructed 2-functor 
\begin{equation*}
F  \df  R_{(\mathrm{triv},g,\psi,f)} \circ s
\end{equation*}
with a $\pi$-local $i$-trivialization $(\mathrm{triv},t)$. Clearly, we take the given 2-functor $\mathrm{triv}$ as the first ingredient and are left with the construction of a \pe
\begin{equation}
\label{37}
t \maps  \pi^{*}F \to \mathrm{triv}_i\text{.}
\end{equation}
This equivalence is simply defined by
\begin{equation*}
\alxydim{@C=0.7cm@R=0.7cm}{\mathcal{P}_2(Y)  \ar[rr]^{\pi_{*}} \ar[ddd]_{\mathrm{triv}} \ar[dr]_{\iota} && \mathcal{P}_2(M)   \ar[dd]^{s} \\ & \upp{\pi}{2}{M} \ar[rd]_{\id}="1" \ar[ur]^{\uppp} & \\&&\upp{\pi}{2}{M} \ar[d]^{R_{(\mathrm{triv},g,\psi,f)}} \\ \mathrm{Gr} \ar[rr]_{i} && T \ar@{=>}[uuu];"1"|-*+{\zeta}}
\end{equation*}
where $\zeta$ is the \pe\ from Section \ref{sec2_2}. The triangle on the top of the latter diagram is equation \erf{1}, and the remaining subdiagram expresses the equation
\begin{equation*}
\iota^{*}R_{(\mathrm{triv},g,\psi,f)}= \mathrm{triv}_i 
\end{equation*}
which follows from Lemma \ref{lem1}.

We conclude with a lemma about the reconstruction of 2-functors from normalized descent objects.

\begin{lemma}
\label{reconinversionstrict}
Suppose $T$ is a strict 2-category, $i:\mathrm{Gr} \to T$ is a strict 2-functor, and $(\mathrm{triv},g,\psi,f)$ is a normalized descent object.  
Then, the reconstructed 2-functor 
\begin{equation*}
R_{(\mathrm{triv},g,\psi,f)} \circ s: \mathcal{P}_2(M) \to T
\end{equation*}
is normalized in the sense explained in Appendix \ref{app1}.
\end{lemma}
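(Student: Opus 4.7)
The plan is to check each piece of data that defines the 2-functor $F \df R_{(\mathrm{triv},g,\psi,f)} \circ s$ and show that under the normalization assumptions on $(\mathrm{triv},g,\psi,f)$ together with strictness of $T$ and $i$, every structure 2-morphism of $F$ that could obstruct normalization collapses to an identity. I will verify this for the unitor first and then for the compositor on inverse paths, which is the only nontrivial piece after Lemma \ref{comps} is invoked.

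For the unitor, by construction of the section 2-functor $s$ we have $s(\id_x) = \id_{s(x)}^{*}$. Applying $R_{(\mathrm{triv},g,\psi,f)}$, which by definition sends formal identities $\id_a^{*}$ to $\id_{\mathrm{triv}_i(a)}$, gives $F(\id_x) = \id_{\mathrm{triv}_i(s(x))} = \id_{F(x)}$, and moreover the unitor of $s$ is declared trivial and that of $R_{(\mathrm{triv},g,\psi,f)}$ is trivial by strictness. So the unitor of $F$ is an identity.

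The key step concerns the compositor $c^{F}_{\gamma,\gamma^{-1}}$ on a path and its reverse. By Lemma \ref{comps}, the compositor $c^{s}_{\gamma,\gamma^{-1}}$ of the section 2-functor is built out of four kinds of basic 2-morphisms in $\upp{\pi}{2}{M}$: associators and unifiers of type (2a), unitors and compositors of type (2b), 2-morphisms $\Delta_a$ of type (1d), and 2-morphisms $\Xi$ of type (1c) lying in the image of $\Delta_{121}$. Applying the strict 2-functor $R_{(\mathrm{triv},g,\psi,f)}$, each of these is sent to an identity: the associators and unifiers of $T$ are identities because $T$ is strict, the unitors and compositors of $i$ are identities because $i$ is strict, each $\Delta_a$ is sent to $\psi(a)$ which equals $\id$ by the normalization condition $\psi = \id_{\Delta^{*}g}$, and each $\Xi$ in the image of $\Delta_{121}$ is sent to a component of $\Delta_{121}^{*}f$ which equals $\id$ by the normalization condition $\Delta_{121}^{*}f = \id_{\Delta_{11}^{*}g}$. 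Hence $R_{(\mathrm{triv},g,\psi,f)}(c^{s}_{\gamma,\gamma^{-1}})$ is a composition of identities and therefore an identity, so $c^{F}_{\gamma,\gamma^{-1}} = \id$.

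The expected obstacle is bookkeeping: Lemma \ref{comps} only lists the generators appearing in $c^{s}_{\gamma,\gamma^{-1}}$, so I need to be sure that no other types of 2-morphisms are hiding in the chosen presentation, and that the decomposition is well-defined up to the identifications (I)--(V) in $\upp{\pi}{2}{M}$. Both concerns are resolved by the uniqueness statement of Lemma \ref{lem3}(b), which pins down $c^{s}_{\gamma,\gamma^{-1}}$ as the unique 2-isomorphism with prescribed image under $\uppp$, so any representative, in particular the one supplied by Lemma \ref{comps}, computes the same value once $R_{(\mathrm{triv},g,\psi,f)}$ is applied. Together with the unitor computation above, this verifies the normalization conditions listed in Appendix \ref{app1} for $F = R_{(\mathrm{triv},g,\psi,f)} \circ s$.
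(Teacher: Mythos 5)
Your proposal is correct and follows essentially the same route as the paper: trivial unitor of $s$ plus strictness of $R$ handles the unitor, and Lemma \ref{comps} combined with strictness of $T$ and $i$ and the normalization conditions $\psi=\id$ and $\Delta_{121}^{*}f=\id$ kills the compositor $c^{F}_{\gamma,\gamma^{-1}}$. Your added remark that only the components of $f$ in the image of $\Delta_{121}$ occur (so that the normalization condition applies exactly to them) is in fact slightly more careful than the paper's phrasing.
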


\begin{proof}
We write $R :=  R_{(\mathrm{triv},g,\psi,f)}$ and $F := R \circ s$ for abbreviation.
Let $x\in M$. Since $R$ is strict and $s$ has trivial unitor, $F$ has a trivial unitor:
\begin{equation*}
u_x^{F} = u^{R}_{s(x)} \bullet R (u^{s}_x) = R (\id_{\id^{*}_{s(x)}}) =\id_{R(\id^{*}_{s(x)})} = \id_{\id_{\mathrm{triv}_i(s(x))}}\text{.}
\end{equation*}
Let $\gamma:x \to y$ be a path. Since $R$ is strict, we have
\begin{equation*}
c^{F}_{\gamma,\gamma^{-1}} = R(c^{s}_{\gamma,\gamma^{-1}}) \text{.}
\end{equation*}
By Lemma \ref{comps} the compositor $c^{s}_{\gamma,\gamma^{-1}}$ consists of 2-morphisms of types (2a), (2b), (1c) and (1d). Since $T$ and $i$ are strict, all 2-morphisms of technical type (2a) and (2b) are sent by $R$ to identities. The 2-morphisms of types (1c)  and (1d) are sent by $R$ to the components of $\psi$ and $f$, and these are identities since $(\mathrm{triv},g,\psi,f)$ is a normalized descent object.
\end{proof}

\setsecnumdepth{2}

\section{Local - Global Equivalence}

\label{sec2_4}
\label{sec:equivalence}

In this section we prove the main theorem of this article, namely that extraction and reconstruction establish an equivalence between locally defined descent data and globally defined 2-functors.

\subsection{Equivalence for a Fixed Cover}

Let $i \maps  \mathrm{Gr} \to T$ be a 2-functor from a strict 2-groupoid $\mathrm{Gr}$ to a 2-category $T$, and let $\pi \maps Y \to M$ be a surjective submersion.

\begin{proposition}
\label{prop:equiv}
The 2-functor
\begin{equation*}
\ex{\pi}  \maps  \loctrivfunct{i}{\mathrm{Gr}}{2} \to \trans{i}{\mathrm{Gr}}{2}
\end{equation*}
is an equivalence of 2-categories.
\end{proposition}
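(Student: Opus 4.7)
The plan is to show that the reconstruction 2-functor $\mathrm{Rec}_\pi$ from Section \ref{sec:recon}, enhanced with the canonical $\pi$-local $i$-trivialization $(\mathrm{triv},t)$ constructed at the end of Section \ref{sec2_3}, is a pseudo-inverse to $\ex{\pi}$. Concretely I would construct invertible pseudonatural transformations
\begin{equation*}
\eta \maps \id_{\loctrivfunct{i}{\mathrm{Gr}}{2}} \Rightarrow \mathrm{Rec}_{\pi} \circ \ex{\pi}
\quand
\varepsilon \maps \ex{\pi} \circ \mathrm{Rec}_{\pi} \Rightarrow \id_{\trans{i}{\mathrm{Gr}}{2}}.
\end{equation*}
Both construction tasks reduce, via the pairing 2-functor $R$ and the section 2-functor $s$, to coherence calculations that are already encoded in the axioms of the codescent 2-groupoid $\upp{\pi}{2}{M}$.

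For $\varepsilon$, I would start with a descent object $D\df(\mathrm{triv},g,\psi,f)$. Reconstruction gives a 2-functor $F\df R_D\circ s$ equipped with the trivialization $(\mathrm{triv},t)$ of \erf{37}; subsequent extraction yields $D'\df(\mathrm{triv},g',\psi',f')$ with $g'=\pi_2^{*}t\circ \pi_1^{*}\bar t$, $\psi'=j_t$, and $f'$ assembled from $i_t$ plus associators. The $\mathrm{triv}$-component is literally unchanged, so I would take $h\df \id_{\mathrm{triv}_i}$ and use the pseudonatural equivalence $\zeta$ of Proposition \ref{prop2} to build an invertible modification $\epsilon \maps g \Rightarrow g'$: evaluated on a jump $\alpha \in Y^{[2]}$, the definition of $t$ in Section \ref{sec2_3} unwinds $g'(\alpha)$ into a composition whose image under $R_D$ of the relevant jump-and-triangle diagram in $\upp{\pi}{2}{M}$ recovers $g(\alpha)$. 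Checking the descent-1-morphism diagrams \erf{5} and \erf{4} is then an instance of the coherence in $R_D$ together with the zig-zag identities for $\zeta$.

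For $\eta$, I would start with $(F,\mathrm{triv},t)\in \loctrivfunct{i}{\mathrm{Gr}}{2}$, extract $D=(\mathrm{triv},g,\psi,f)$ with $g=\pi_2^{*}t\circ \pi_1^{*}\bar t$, and reconstruct $F'\df R_D\circ s$. The candidate \pe\ $A \maps F\to F'$ has component $A_x \df t(s(x)) \maps F(x) \to \mathrm{triv}_i(s(x))=F'(x)$. For a path $\gamma \maps x\to y$, whose lift $s(\gamma)$ decomposes as a formal composition of paths $\tilde\gamma_k$ in $Y$ and jumps $\alpha_k\in Y^{[2]}$, I would define $A_\gamma$ by interleaving the pseudonaturality 2-cells $t(\tilde\gamma_k)$ with the components of $g$ on the jumps $\alpha_k$; this is exactly how $R_D\circ s$ was built. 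The local picture on a bigon $\Sigma$ supported in one $U_i$ is then an immediate consequence of the pseudonaturality of $t$ on the path 2-groupoid of $U_i$.

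The main obstacle will be verifying axiom (T2) for $A$ on a general bigon $\Sigma$ whose image is not contained in a single coordinate chart: after choosing the decomposition of $\Sigma$ into pieces $\Sigma_j$ used in the definition of $s(\Sigma)$, the coherence amounts to gluing the local pseudonatural-transformation 2-cells using the modifications $\psi$ and $f$ that were extracted from $t$, and this gluing is forced by Lemma \ref{app2pp_lem1} (uniqueness of the 2-isomorphisms that mediate between different jump-compositions). All remaining checks (that $\eta$ and $\varepsilon$ extend to modifications on descent 1-morphisms resp. pseudonatural transformations $F\to F'$, and that each is a \pe) reduce to naturality of the constructions in $t$, $\bar t$, $i_t$, $j_t$ and to Corollary \ref{co1}, which guarantees independence of the auxiliary choices entering $s$. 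Bijectivity on 2-morphisms is immediate from the definition $\ex{\pi}(B) = (\id\circ \pi^{*}B)\circ \id$, which is invertible as soon as $t$ and $\bar t$ are.
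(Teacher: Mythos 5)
Your proposal follows essentially the same route as the paper: the paper proves the proposition by exhibiting $\mathrm{Rec}_{\pi}$ as a pseudo-inverse to $\ex{\pi}$ via two lemmata, constructing a \pe\ $\ex{\pi}\circ\mathrm{Rec}_{\pi}\cong\id$ whose components are descent 1-morphisms with $h=\id_{\mathrm{triv}}$ and a modification coming from $f$ (equivalently, from the image of $\zeta$ under $R$), and a \pe\ $\id\cong\mathrm{Rec}_{\pi}\circ\ex{\pi}$ whose component at $(F,\mathrm{triv},t)$ has $t(s(x))$ at points and is assembled on paths exactly as you describe. The key constructions and coherence mechanisms you identify match the paper's proof.
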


For the proof we shall choose a section 2-functor $s \maps  \mathcal{P}_2(M) \to \upp{\pi}{2}{M}$ defining the reconstruction 2-functor $\mathrm{Rec}_{\pi}$. We show that $\ex{\pi}$ and $\mathrm{Rec}_{\pi}$ form an equivalence of 2-categories.  
This is done in the following two lemmata.
\begin{lemma}
\label{sec2_lem1}
There is a \pe\ $\ex{\pi} \circ \mathrm{Rec}_{\pi} \cong \id_{\trans{i}{i}{2}}$.
\end{lemma}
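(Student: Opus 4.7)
The plan is to construct, for each descent object $(\mathrm{triv},g,\psi,f)$, a descent 1-isomorphism from $\ex{\pi}(\mathrm{Rec}_{\pi}(\mathrm{triv},g,\psi,f))$ to $(\mathrm{triv},g,\psi,f)$, and then check that these components assemble into a pseudonatural equivalence.

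First I would unpack $\ex{\pi}\circ\mathrm{Rec}_{\pi}$ on a descent object. Reconstruction gives $F := R_{(\mathrm{triv},g,\psi,f)} \circ s$ with trivialization $(\mathrm{triv},t)$, where $t = R_{(\mathrm{triv},g,\psi,f)}\cdot \zeta \cdot \iota$ is the whiskered form of the pseudonatural equivalence $\zeta$ of Proposition \ref{prop2} (using $\iota^{*}R_{(\mathrm{triv},g,\psi,f)} = \mathrm{triv}_i$ from Lemma \ref{lem1}), and $\bar t$ is built similarly from $\xi$. The key point is that the components of $\zeta,\xi$ at $a\in Y$ lying over $p\in M$ are the jumps $(s(p),a)$ and $(a,s(p))$ in $\upp{\pi}{2}{M}$, so $t(a) = g(s(p),a)$ and $\bar t(a) = g(a,s(p))$; the coherence modifications $i_{\zeta},j_{\zeta}$ are composed of 2-morphisms of types (1c) and (1d), whose $R$-images compute to
\begin{equation*}
i_t(a) = \psi(s(p))^{-1}\bullet f(s(p),a,s(p))\text{,}\qquad j_t(a) = f(a,s(p),a)^{-1}\bullet\psi(a)\text{.}
\end{equation*}
Feeding this into the formulas of Section \ref{sec1_3} yields the extracted descent object $(\mathrm{triv},\tilde g,\tilde\psi,\tilde f)$ with $\tilde g(a,b) = g(s(p),b)\circ g(a,s(p))$, $\tilde\psi(a) = j_t(a)$, and $\tilde f$ the explicit pasting described after Lemma \ref{lem5}.

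I then define the equivalence's component at $(\mathrm{triv},g,\psi,f)$ to be the descent 1-morphism $(h,\varepsilon)$ with $h := \id_{\mathrm{triv}_i}$ and
\begin{equation*}
\varepsilon(a,b) := f(a,s(p),b)\maps g(s(p),b)\circ g(a,s(p)) \Rightarrow g(a,b)
\end{equation*}
inserted between the canonical \unifier s for the identity $h$. Invertibility is immediate: $(h,\varepsilon^{-1})$ built from $f^{-1}$ is a strict inverse, so the zigzag identities collapse to $f\bullet f^{-1}=\id$. Axiom \erf{4} then holds on the nose, since $\varepsilon(a,a)\bullet\tilde\psi(a) = f(a,s(p),a)\bullet f(a,s(p),a)^{-1}\bullet\psi(a) = \psi(a)$, matching the left-hand side of \erf{4} for strict $h$. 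For pseudonaturality of $\Phi := \{(h,\varepsilon)\}$, a descent 1-morphism $(k,\epsilon_k)$ produces by reconstruction an extracted 1-morphism with first component $\tilde k(a) = g'(s(p),a)\circ k(s(p))\circ g(a,s(p))$; this is related to $k(a)$ by a composite of $\epsilon_k$ at the jump $(s(p),a)$, of $f(a,s(p),a)$, and of $\psi(a)^{-1}$, supplying the required naturality 2-cell, and the descent-2-morphism axiom \erf{6} then reduces to axiom \erf{5} of $(k,\epsilon_k)$ at the triples $(a,s(p),b)\in Y^{[3]}$.

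The main obstacle is the descent-1-morphism axiom \erf{5} for $(h,\varepsilon)$ itself. When $\tilde f(a,b,c)$ is unfolded, its core is the composite $i_t(b) = \psi(s(p))^{-1}\bullet f(s(p),b,s(p))$ inserted between $t(c)\circ\bar t(b)$ and $t(b)\circ\bar t(a)$ via associators and the right \unifier. Thus the two paths in \erf{5}---one via $\tilde f(a,b,c)$ followed by $\varepsilon(a,c) = f(a,s(p),c)$, the other via $\varepsilon(b,c)\circ\varepsilon(a,b) = f(b,s(p),c)\circ f(a,s(p),b)$ followed by $f(a,b,c)$---differ by a pasting of $\psi(s(p))^{-1}$ with $f(s(p),b,s(p))$. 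Matching them requires two instances of the cocycle identity \erf{3}, at the 4-tuples $(a,s(p),b,c)$ and $(s(p),b,s(p),c)\in Y^{[4]}$, together with the unit axiom \erf{2} at $(s(p),s(p),c)$ to absorb the $\psi$-factor; once the pasting diagram is organized so these axioms can be invoked in sequence, the identity closes.
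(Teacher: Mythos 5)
Your proposal follows essentially the same route as the paper: you compute the extracted descent object of the reconstructed 2-functor, observe that $f$ itself furnishes a modification from the extracted $g'(a,b)=g(s(p),b)\circ g(a,s(p))$ to $g$, take the component of the equivalence to be $(\id_{\mathrm{triv}},\varepsilon)$ with $\varepsilon$ built from $f(a,s(p),b)$, and handle pseudonaturality via the same $\tilde\varepsilon$ combining $\varepsilon$, $f(a,s(p),a)$ and $\psi(a)^{-1}$ that the paper uses. Your identification of the precise cocycle and unit-axiom instances needed for axiom \erf{5} correctly fills in details the paper leaves as straightforward.
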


\begin{proof}
Given a descent object $(\mathrm{triv},g,\psi,f)$ let us pass to the reconstructed 2-functor and extract its descent data $(\mathrm{triv}',g',\psi',f')$. We find immediately $\mathrm{triv}'=\mathrm{triv}$. Furthermore, the \pt\ $g'$ has the components
\begin{equation*}
g'\quad:\quad\alxydim{}{\alpha \ar[r]^-{\Theta} & \alpha'}
\quad\mapsto\quad
\alxydim{@C=-0.0cm@R=0.7cm}
{& \pi_1^{*}\mathrm{triv}_i(\alpha) \ar[dl]
 \ar@/^1.8pc/[dd]^{df}="1"
\ar[rr]^{\pi_1^{*}\mathrm{triv}_i(\Theta)} & \hspace{2cm}& \pi_1^{*}\mathrm{triv}_i(\alpha') \ar@{=>}[ddll]|{g(\Theta)}\ar[dr] \ar@/_1.8pc/[dd]|{}="2" \\ c_{\alpha} \ar[dr] &&&&c_{\alpha'} \ar[dl] \\ & \pi_2^{*}\mathrm{triv}_i(\alpha) \ar[rr]_{\pi_1^{*}\mathrm{triv}_i(\Theta)} && \pi_2^{*}\mathrm{triv}_i(\alpha') \ar@{=>}"1";[lllu]|-{f(\Xi_{\alpha})^{-1}} \ar@{=>}[ur];"2"|-{f(\Xi_{\alpha'})}}
\end{equation*} 
where we have introduced an object  $c_{\alpha}  \df  \mathrm{triv}_i(s(p))$ where $p=\pi(\pi_1(\alpha))=\pi(\pi_2(\alpha))$ and a 2-morphism $\Xi_{\alpha}  \df (\pi_1(\alpha),s(p),\pi_2(\alpha))$.  It is useful to notice that this means that $f$ is a modification $f \maps  g' \Rightarrow g$. The modification $\psi'$ has the component 
\begin{equation*}
\alxydim{@C=0.5cm}{\mathrm{triv}_i(a) \ar@/^2pc/[rr]^{\id_{\mathrm{triv}_i(a)}}="1" \ar[dr] \ar[rr]|{}="2" && \mathrm{triv}_i(a) \\& c_{\Delta(a)} \ar[ur]& \ar@{=>}"1";"2"|-{\psi(a)} \ar@{=>}"2";[l]|-{f(\Psi)}}
\end{equation*}
at a point $a\in Y$. Finally, the modification
$f'$ has the component
\begin{equation*}
\alxydim{@C=0cm}{&& \pi_2^{*}\mathrm{triv}_i(\Xi) \ar@/^1.5pc/[ddrr]^-{\pi_{23}^{*}g'(\Xi)} \ar[dr] && \\ & c_{\Xi} \ar[rr]^{}="1" \ar@/_1.8pc/[rr]_{\id_{c_{\Xi}}}="2" \ar@{=>}"1";"2"|-{\psi(p)^{-1}} \ar[ur] && c_{\Xi} \ar[dr] & \\ \pi_1^{*}\mathrm{triv}_i(\Xi) \ar@/_1.5pc/[rrrr]_{\pi_{13}^{*}g'(\Xi)} \ar@/^1.5pc/[uurr]^-{\pi_{12}^{*}g'(\Xi)} \ar[rr] \ar[ur] &&c_{\Xi} \ar[rr] && \pi_3^{*}\mathrm{triv}_i(\Xi) \ar@{=>}|-{f(\Psi)}[lluu];"1"}
\end{equation*}
at a point $\Xi\in Y^{[3]}$, where we have introduced the 2-morphism $\Psi \df (c_{\Xi},\pi_2^{*}\mathrm{triv}_i(\Xi),c_{\Xi})$, and $p$ is again the projection of $\Xi$ to $M$.
Now it is straightforward to construct a descent 1-morphism
\begin{equation*}
\rho_{(\mathrm{triv},g,\psi,f)}  \maps (\mathrm{triv},g',\psi',f') \to (\mathrm{triv},g,\psi,f)
\end{equation*}
which consists of the identity \pt\ $h \df \id_{\mathrm{triv}}$ and of a modification $\varepsilon \maps  \pi_2^{*}h \circ g' \Rightarrow g \circ \pi_1^{*}h$ induced from the modification $f \maps  g' \Rightarrow g$ and the left and right \unifier s.  This descent 1-morphism is the component of the \pe\ $\rho \maps \ex{\pi} \circ \mathrm{Rec}_{\pi} \to \id$ we have to construct, at the object $(\mathrm{triv},g,\psi,f)$.

Let us now define the component of $\rho$ at a  descent 1-morphism
\begin{equation*}
(h,\varepsilon)  \maps  (\mathrm{triv}_1,g_1,\psi_1,f_1) \to (\mathrm{triv}_2,g_2,\psi_2,f_2)\text{.}
\end{equation*}
It is useful to introduce a modification $\tilde \epsilon \maps  \bar g_2 \circ \pi_2^{*}h \circ g_1 \Rightarrow \pi_1^{*}h$ where $\bar g_2$ is the pullback of $g_2$ along the map $Y^{[2]} \to Y^{[2]}$ that exchanges the components. It is defined as the following composition of modifications:
\begin{equation*}
\alxydim{@C=2cm}{\bar g_2 \circ \pi_2^{*}h \circ g_1 \ar@{=>}[r]^{\id \circ \varepsilon} & \bar g_2 \circ g_2 \circ \pi_1^{*}h \ar@{=>}[d]^{\Delta_{121}^{*}f_2 \circ \id} \\ & \pi_1^{*}\Delta^{*}g\circ \pi_1^{*}h \ar@{=>}[r]_-{\pi_1^{*}\psi_2^{-1} \circ \id} & \pi_1^{*}\id\circ \pi_1^{*}h \ar@{=>}[r]_-{l_{\pi_1^{*}h}} & \pi_1^{*}h}
\end{equation*}
Now, if we reconstruct and extract local data $(h',\varepsilon')$, the \pt\ $h'$ has the components
\begin{equation*}
h'\quad \maps \quad\alxydim{}{a \ar[r]^{\gamma} &b}\quad\mapsto\quad
\alxydim{@C=-0.4cm@R=0.7cm}{ & i(\mathrm{triv}_1(a)) \ar[dl] \ar[ddd]^{h(a)}="1" \ar[rr]^{i(\mathrm{triv}_1(\gamma))} &\hspace{2cm}& i(\mathrm{triv}_2(b)) \ar@{=>}[dddll]|{h(\gamma)}\ar[dr] \ar[ddd]_{h(b)}="2" \\ i(\pi_2^{*}\mathrm{triv}_1(\alpha)) \ar[d]_{\pi_2^{*}h(\alpha)}="3" &&&& i(\pi_2^{*}\mathrm{triv}_1(\beta)) \ar[d]^{\pi_2^{*}h(\beta)}="4" \\i(\pi_2^{*}\mathrm{triv}_2(\alpha))^2 \ar[dr] &&&& i(\pi_2^{*}\mathrm{triv}_2(\beta)) \ar[dl]\\ & i(\mathrm{triv}_2(a)) \ar[rr]_{i(\mathrm{triv}_2(\gamma))} && i(\mathrm{triv}_2(b)) \ar@{=>}"1";"3"|-{\tilde \varepsilon(\alpha)^{-1}} \ar@{=>}"4";"2"|-{\tilde \varepsilon(\beta)}}
\end{equation*}
with $\alpha  \df  (a,s(\pi(a)))$ and $\beta=(b,s(\pi(b)))$. Like above we observe that $\tilde\varepsilon$ is hence a modification $\tilde\varepsilon  \maps h' \Rightarrow h$. Now, the component $\rho_{(h,\varepsilon)}$ we have to define is a descent 2-morphism
\begin{equation*}
\alxydim{@R=1.3cm@C=1.3cm}{(\mathrm{triv}'_1,g',\psi',f') \ar[d]_{\rho_{(\mathrm{triv}_1,g_1,\psi_1,f_1)}} \ar[r]^-{(h',\varepsilon')}  & (\mathrm{triv}_2',g_2',\psi_2',f_2') \ar@{=>}[dl]|*+{\rho_{(h,\varepsilon)}} \ar[d]^{\rho_{(\mathrm{triv}_2,g_2,\psi_2,f_2)}} \\ (\mathrm{triv}_1,g_1,\psi_1,f_1) \ar[r]_-{(h,\varepsilon)} & (\mathrm{triv}_2,g_2,\psi_2,f_2)\text{,}}
\end{equation*}
this is just a modification $\id \circ h' \Rightarrow h \circ \id$ since the vertical arrows are the identity \pt s. We define  $\rho_{(h,\varepsilon)}$ from $\tilde\varepsilon$ and right and left unifiers in the obvious way.
It is straightforward to see that this defines indeed a descent 2-morphism. Finally, we observe that the definitions $\rho_{(\mathrm{triv},g,\psi,f)}$ and $\rho_{(h,\varepsilon)}$ furnish a \pe\ as required.
\end{proof}

The second part of the proof of Proposition \ref{prop:equiv} is the following lemma.
\begin{lemma}
\label{sec2_lem2}
There is a \pe\ $\id_{\loctrivfunct{i}{\mathrm{Gr}}{2}} \cong \mathrm{Rec}_{\pi} \circ \ex{\pi}$.
\end{lemma}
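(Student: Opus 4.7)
Parallel to the previous lemma, the plan is to construct for each $(F,\mathrm{triv},t) \in \loctrivfunct{i}{\mathrm{Gr}}{2}$ a \pe\
\begin{equation*}
\lambda_{(F,\mathrm{triv},t)} \maps F \to R_{\ex{\pi}(F,\mathrm{triv},t)} \circ s
\end{equation*}
depending naturally on the triple. The key idea is to first extend $t$ from $\mathcal{P}_2(Y)$ to a \pe\
\begin{equation*}
\tilde t \maps F \circ \uppp \to R_{(\mathrm{triv},g,\psi,f)}
\end{equation*}
defined on the codescent 2-groupoid, with $(\mathrm{triv},g,\psi,f) \df \ex{\pi}(F,\mathrm{triv},t)$. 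Since $\uppp \circ s = \id_{\mathcal{P}_2(M)}$ by the construction preceding Proposition \ref{prop2}, the whiskered \pt\ $\lambda_{(F,\mathrm{triv},t)} \df \tilde t \ast \id_{s}$ is then a \pe\ of the desired shape.

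The transformation $\tilde t$ is defined on the objects and basic 1-morphisms of $\upp{\pi}{2}{M}$: at an object $a \in Y$ its component is $t(a)$; at a path $\gamma$ in $Y$ its naturality 2-cell is $t(\gamma)$; and at a jump $\alpha \in Y^{[2]}$, where $F \circ \uppp$ sends $\alpha$ to an identity while $R$ sends it to $g(\alpha) = t(\pi_2(\alpha)) \circ \bar t(\pi_1(\alpha))$, the naturality 2-cell is built from $i_t$ and the \unifier s of $T$. On a composite of basic 1-morphisms, $\tilde t$ is defined by pasting, so axiom (T1) holds tautologically. Axiom (T2) on the essential basic 2-morphisms (1a)--(1d) has to be verified against the identifications (I)--(V) imposed on $\upp{\pi}{2}{M}$: for (1a) it is axiom (T2) for $t$; for (1b) it reduces to axiom (T2) for $t$ and $\bar t$, which is built into the definition $g = \pi_2^{*}t \circ \pi_1^{*}\bar t$; for (1c) and (1d) it becomes the modification axioms for $f$ and $\psi$; and compatibility with identifications (V) is precisely diagrams \erf{2} and \erf{3}, which are part of the descent object and have already been checked in Lemma \ref{lem5}. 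A weak inverse $\bar{\tilde t}$ is constructed symmetrically from $\bar t$, using $\pi_1^{*}t \circ \pi_2^{*}\bar t$ on jumps; the modifications $i_{\tilde t}$ and $j_{\tilde t}$ extend $i_t$ and $j_t$, and the zigzag identities follow from those for $t$ together with the uniqueness-of-2-isomorphism arguments from the proof of Proposition \ref{prop2}.

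Naturality in the 1-morphism component of $\loctrivfunct{i}{\mathrm{Gr}}{2}$ is verified as follows. For a \pt\ $A \maps (F,\mathrm{triv},t) \to (F',\mathrm{triv}',t')$, the extracted descent 1-morphism $\ex{\pi}(A) = (h,\varepsilon)$ has underlying \pt\ $h = (t' \circ \pi^{*}A) \circ \bar t$. The required invertible modification
\begin{equation*}
\mu_A \maps \mathrm{Rec}_\pi(\ex{\pi}(A)) \circ \lambda_{(F,\mathrm{triv},t)} \Rightarrow \lambda_{(F',\mathrm{triv}',t')} \circ A
\end{equation*}
can be defined by stripping the common factor $\ast \id_s$ and then, on the generators of $\upp{\pi}{2}{M}$, combining the identity $i_{t}$ with the unifiers of $T$; this is a direct consequence of the formula for $h$. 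Compatibility with 2-morphisms, and with the compositors and unitors of $\mathrm{Rec}_\pi \circ \ex{\pi}$, is verified by similar diagram manipulations. The main obstacle is the diagram chase for axiom (T2) of $\tilde t$ on jumps in the presence of identifications (IV) and (V); this, however, is exactly the pentagon-plus-naturality argument of Lemma \ref{lem5} read in the opposite direction.
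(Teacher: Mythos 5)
Your proposal is correct and matches the paper's proof in substance: the paper defines the transformation $\eta_F \maps F \to s^{*}R_{(\mathrm{triv},g,\psi,f)}$ directly, with component $t(s(x))$ at points and components at paths assembled from the pieces $t(\gamma_i)$ on path segments and $i_t^{-1}$ (together with the definition $g=\pi_2^{*}t\circ\pi_1^{*}\bar t$) on jumps, and uses $i_{t_1}^{-1}$ for naturality in $A$ --- exactly the data your whiskering $\tilde t \ast \id_s$ produces. Your factorization through a transformation $\tilde t$ on the codescent 2-groupoid is only a mild (and arguably cleaner) repackaging of the same construction, not a different argument.
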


\begin{proof}
For a 2-functor $F \maps \mathcal{P}_2(X) \to T$ and a $\pi$-local $i$-trivialization $(\mathrm{triv},t)$, let $(\mathrm{triv},g,\psi,f)$ be the associated descent data. We find a \pt\ 
\begin{equation*}
\eta_F \maps  F \to s^{*}R_{(\mathrm{triv},g,\psi,f)}
\end{equation*}
in the following way. Its component at a point $x\in X$ is the 1-morphism $t(s(x)) \maps  F(x) \to \mathrm{triv}_i(s(x))$ in $T$. To define its component at a path $\gamma \maps x \to y$ we recall that $s(\gamma)$ is a composition of paths $\gamma_{i} \maps  a_i \to b_i$  and jumps $\alpha_i$, so that we can compose $\eta_F(\gamma)$ from the pieces
\begin{equation*}
\alxydim{@C=1.5cm}{\pi^{*}F(a_i) \ar[d]_{\eta_F(a_i)} \ar[r]^{\pi^{*}F(\gamma_{i})
} & \pi^{*}F(b_i) \ar@{=>}[dl]|{t(\gamma_i)} \ar[d]^{\eta_F(b_i)} \\ \mathrm{triv}_i(a_i) \ar[r]_{\mathrm{triv}_i(\gamma)} & \mathrm{triv}_i(b_i)}
\quad\text{ and }\quad
\alxydim{}{& F(p) \ar[dl]_{\eta_F({\pi_1(\alpha)})} \ar[d]|{\id}="1" \ar@{=>}"1";[ld]|-{i_t^{-1}} \ar[dr]^{\eta_F({\pi_2(\alpha)})} & \\ \pi_1^{*}\mathrm{triv}_i(\alpha) \ar@/_2pc/[rr]_{g(\alpha)} \ar[r]_-{\pi_1^{*}\bar t(\alpha)} & F(p) \ar[r]_-{\pi_2^{*}t(\alpha)} & \pi_2^{*}\mathrm{triv}_i(\alpha)}
\end{equation*}
where $i_t \maps \bar t \circ t \Rightarrow \id$ is the modification chosen to extract descent data. This defines the \pt\ $\eta_F$ associated to a 2-functor $F$.

Now let $A \maps  F_1 \to F_2$ be a \pt\  between two 2-functors with local trivializations $(\mathrm{triv_1},t_1)$ and $(\mathrm{triv}_2,t_2)$. Let $(h,\varepsilon)$ the associated descent 1-morphism. It is now straightforward to see that
\begin{equation*}
\eta_{A}  \df  i_{t_{1}}^{-1} \maps  \eta_{F_2} \circ A \Rightarrow s^{*}R_{(h,\varepsilon)} \circ \eta_{F_1}
\end{equation*} 
defines a modification in such a way that both definitions together yield  a \pt\ $\eta \maps  \id_{\loctrivfunct{i}{\mathrm{Gr}}{2}} \to \mathrm{Rec}_{\pi} \circ \ex{\pi}$. It is clear that $\eta$ is even a \pe.
\end{proof}

\subsection{Equivalence in the Direct Limit}

As mentioned in Section \ref{sec:loctriv}, path 2-groupoids come with 2-functors $f_{*} \maps  \mathcal{P}_2(M) \to \mathcal{P}_2(N)$ associated to smooth maps $f \maps M \to N$. In turn, these define 2-functors
\begin{equation*}
f^{*} \maps  \mathrm{Funct}(\mathcal{P}_2(N),T) \to \mathrm{Funct}(\mathcal{P}_2(M),T)\text{.}
\end{equation*}
The compatibility \erf{eq:pullbackcomp} of the 2-functors $f_{*}$ with the composition of smooth maps show that
\begin{equation}
\label{eq:pullbackfunct}
(g \circ f)^{*} = f^{*} \circ g^{*}
\end{equation}
for $g \maps  N \to O$ another smooth map.

Now let $\pi_1 \maps Y_1 \to M$ and $\pi_2 \maps  Y_2 \to M$ be surjective submersions and let $\xi  \maps  Y_1 \to Y_2$ be a smooth map such that $\pi_2 \circ \xi = \pi_1$. We call $\xi$ a \emph{refinement} of $\pi_2$. Equation \erf{eq:pullbackfunct} implies that we obtain induced \quot{restriction} 2-functors
\begin{equation}
\label{eq:res2funct}
\mathrm{res}_{\xi} \maps  \mathrm{Triv}^2_{\pi_2}(i)  \to \mathrm{Triv}^2_{\pi_1}(i)
\quand
\mathrm{res}_{\xi} \maps  \mathfrak{Des}^2_{\pi_2}(i) \to \mathfrak{Des}^2_{\pi_1}(i)\text{,}
\end{equation}   
and that these 2-functors themselves satisfy the compatibility condition \erf{eq:pullbackfunct}, with respect to iterated refinements of surjective submersions.

In general, suppose that $S$ is a family of 2-categories parameterized by surjective submersions over a smooth manifold $M$. That is, if $\pi \maps Y \to M$ is a surjective submersion, then $S(\pi)$ is a 2-category. Suppose further that $F$ is a family of \quot{refinement} 2-functors parameterized by refinements of surjective submersions. That is, if $\zeta \maps Y' \to Y$  is a refinement of a surjective submersion $\pi \maps Y \to M$ by $\pi' \maps Y' \to M$, then  $F(\zeta) \maps  S(\pi) \to S(\pi')$ is a 2-functor. Further, we require that $F(\zeta' \circ \zeta) = F(\zeta') \circ F(\zeta)$ for iterated refinements. In this situation, one can form the \emph{direct limit 2-category}
\begin{equation*}
S_M  \df  \lim_{\overrightarrow{\pi}}  S(\pi)\text{.}
\end{equation*}

We shall briefly describe a concrete model for this 2-category, the so-called Grothendieck construction. The precise form can be deduced from the general colimit description in $\infty$-categories; see \cite[Corollary 3.3.4.6]{lurie2009}. An object of $S_M$ is a pair $(\pi,X)$ consisting of a surjective submersion $\pi \maps Y \to M$ and an object $X$ in $S(\pi)$. A \emph{common refinement} of surjective submersions $\pi_1$, $\pi_2$ is a commutative diagram
\begin{equation*}
\alxydim{}{&Z \ar[dd]|{\zeta} \ar[dl]_{y_1} \ar[dr]^{y_2} & \\ Y_1 \ar[dr]_{\pi_1} && Y_2 \ar[dl]^{\pi_2} \\ & M&}
\end{equation*}
in which all maps are surjective submersions. A 1-morphism between objects $(\pi_1,X_1)$ and $(\pi_2,X_2)$ is a common refinement $\zeta$ together with a 1-morphism
\begin{equation*}
f \maps F(y_1)(X_1) \to F(y_2)(X_2)
\end{equation*}
in $S(\zeta)$.
The composition of two 1-morphisms
\begin{equation*}
(\zeta_{12},f_{12}) \maps (\pi_1,X_1) \to (\pi_2,X_2)
\quand
(\zeta_{23},f_{23}) \maps (\pi_2,X_2) \to (\pi_3,X_3)
\end{equation*}
is defined as follows. We consider the fibre product $Z_{13}  \df  Z_{12} \times_{Y_2} Z_{23}$ as a common refinement $\zeta_{13} \maps Z_{13} \to M$ of $\pi_1$ and $\pi_3$. Then, we set
\begin{equation*}
(\zeta_{23},f_{23}) \circ (\zeta_{12},f_{12})  \df  (\zeta_{13}, F(\mathrm{pr}_{Z_{12}})(f_{23}) \circ F(\mathrm{pr}_{Z_{23}})(\zeta_{12}))\text{.}
\end{equation*}
In order to define 2-morphisms between 1-morphisms $(\zeta,f)$ and $(\zeta',f')$ we consider pairs $(\omega,\alpha)$ of a common refinement $\omega \maps W \to M$ of $\zeta$ and $\zeta'$ together with a 2-morphism
\begin{equation*}
\alpha  \maps  F(z)(f) \Rightarrow F(z')(f')
\end{equation*}
in $S(\omega)$, where $z \maps W \to Z$ and $z' \maps W \to Z'$ are the two refinement maps. A 2-morphism is then an equivalence class of pairs $(\omega,\alpha)$, where two pairs $(\omega_1,\alpha_1)$ and $(\omega_2,\alpha_2)$  are identified if the 2-morphisms agree when pulled back to the fibre product $W_1 \times_{Z \times_M Z'} W_2$.

In the present situation, we form the direct limits
\begin{equation*}
\mathrm{Triv}^2(i)_{M}  \df  \lim_{\overrightarrow{\pi}} \mathrm{Triv}^2_{\pi}(i)
\quand
\mathfrak{Des}^2(i)_{M}  \df  \lim_{\overrightarrow{\pi}} \mathfrak{Des}^2_{\pi}(i)\text{.}
\end{equation*}
One checks by inspection that the 2-functor $\ex{\pi}$ commutes with the restriction 2-functors $\mathrm{rec}_{\xi}$ of \erf{eq:res2funct}, so that    the  it induces a 2-functor\begin{equation}
\label{eq:equivlimit}
\mathrm{Ex} \maps  \mathrm{Triv}^2(i)_{M} \to \mathfrak{Des}^2(i)_{M}\text{.}
\end{equation}
Since limits preserve equivalences, we conclude from Proposition \ref{prop:equiv}:

\begin{proposition}
\label{prop:equivlimit}
The 2-functor \erf{eq:equivlimit}
is an equivalence of 2-categories.
\end{proposition}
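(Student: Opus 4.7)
The plan is to deduce Proposition \ref{prop:equivlimit} from the fixed-cover equivalence (Proposition \ref{prop:equiv}) by a direct-limit argument, using the explicit Grothendieck-type model for $\mathrm{Triv}^2(i)_M$ and $\mathfrak{Des}^2(i)_M$ described just before the statement.

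First I would verify the compatibility asserted in the paragraph preceding the proposition: for every refinement $\xi \maps  Y_1 \to Y_2$ of surjective submersions over $M$, the square of 2-functors
\begin{equation*}
\alxydim{@C=1.5cm@R=1.2cm}{\mathrm{Triv}^2_{\pi_2}(i) \ar[r]^{\ex{\pi_2}} \ar[d]_{\mathrm{res}_{\xi}} & \mathfrak{Des}^2_{\pi_2}(i) \ar[d]^{\mathrm{res}_{\xi}} \\ \mathrm{Triv}^2_{\pi_1}(i) \ar[r]_{\ex{\pi_1}} & \mathfrak{Des}^2_{\pi_1}(i)}
\end{equation*}
commutes (strictly, or at worst up to a canonical \pe). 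This is an unpacking of the construction in Section \ref{sec:extract}: the extracted descent data is built from pullbacks of the \pe\ $t$ and its weak inverse $\bar t$ along the fibre-product projections, and each of these operations behaves naturally with respect to $\xi$ via equation \erf{eq:pullbackcomp} and \erf{eq:pullbackfunct}. The compositors and unitors of $\ex{\pi}$ are built from $i_t$, $j_t$, and associators, all of which are also preserved under pullback, so this step is routine but should be spelled out carefully.

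With that naturality in place, the family $\lbrace \ex{\pi} \rbrace_\pi$ is a morphism of direct systems of 2-categories, so by the universal property of the Grothendieck construction it descends to the 2-functor $\mathrm{Ex}$ of \erf{eq:equivlimit}. To exhibit a quasi-inverse, I would do the same for reconstruction: choose, for each surjective submersion $\pi$, a section 2-functor $s_\pi$ defining $\mathrm{Rec}_\pi$, and note that although a different choice of section is needed for each $\pi$, Corollary \ref{co1} guarantees that the resulting 2-functors $\mathrm{Rec}_\pi$ are canonically equivalent, which is enough to assemble them into a 2-functor $\mathrm{Rec} \maps \mathfrak{Des}^2(i)_M \to \mathrm{Triv}^2(i)_M$ on the direct limits.

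Finally I would conclude by composing the limit 2-functors and invoking Lemmas \ref{sec2_lem1} and \ref{sec2_lem2} level-wise. Since the \pe s $\rho_\pi$ and $\eta_\pi$ appearing there are themselves natural in refinements (a point to be checked, but which follows from the same pullback-naturality used for $\ex{\pi}$), they induce \pe s $\mathrm{Ex} \circ \mathrm{Rec} \cong \id$ and $\mathrm{Rec} \circ \mathrm{Ex} \cong \id$ on the direct-limit 2-categories. The main obstacle I anticipate is bookkeeping: passing pseudonatural equivalences, modifications, and their coherence identities through the Grothendieck construction requires tracking fibre products $W_1 \Oldtimes_{Z \Oldtimes_M Z'} W_2$ and verifying that the equivalence classes of pairs $(\omega,\alpha)$ used to define 2-morphisms in the direct limit are respected; once this naturality is established, the equivalence in the limit is formal.
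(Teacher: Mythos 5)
Your proposal is correct and follows essentially the same route as the paper, which disposes of this proposition in one line: the commutation of $\ex{\pi}$ with the restriction 2-functors is checked by inspection, and then the fixed-cover equivalence of Proposition \ref{prop:equiv} is transported to the direct limit via the general principle that limits preserve equivalences. Your write-up simply makes explicit what that principle requires (naturality of the quasi-inverses and of the \pe s $\rho_\pi$, $\eta_\pi$ in refinements, with Corollary \ref{co1} absorbing the choice of sections), which is more detail than the paper provides but not a different argument.
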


Now we look at the full sub-2-category $\mathrm{Funct}_i(\mathcal{P}_2(M),T)$ of $\mathrm{Funct}(\mathcal{P}_2(M),T)$ over those  2-functors $F \maps  \mathcal{P}_2(M) \to T$ that \emph{admit} a $\pi$-local $i$-trivialization, for some surjective submersion $\pi \maps Y \to M$. We have a 2-functor
\begin{equation*}
v \maps \mathrm{Triv}^2(i)_M \to \mathrm{Funct}_i(\mathcal{P}_2(M),T) 
\end{equation*}
which simply forgets the local trivialization which is attached to the objects on the left hand side. The 2-functor $v$ is obviously an equivalence of 2-categories, since it is essentially surjective and the identity on Hom-categories. Summarizing, we get:

\begin{theorem}
\label{th:main}
There is an equivalence
\begin{equation*}
\mathfrak{Des}^2(i)_{M} \cong\mathrm{Funct}_i(\mathcal{P}_2(M),T) 
\end{equation*}
between the 2-category of descent data and the 2-category of locally $i$-trivializable 2-functors, realized by a span of equivalences of 2-categories. 
\end{theorem}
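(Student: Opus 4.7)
The plan is to assemble the claimed equivalence as the span
\[
\mathfrak{Des}^2(i)_M \xleftarrow{\mathrm{Ex}} \mathrm{Triv}^2(i)_M \xrightarrow{v} \mathrm{Funct}_i(\mathcal{P}_2(M),T),
\]
and to verify that both legs are equivalences of 2-categories. The left leg is precisely Proposition \ref{prop:equivlimit}, so nothing further is required there; the work of Sections \ref{sec:recon} and \ref{sec2_4}, together with the observation that filtered direct limits preserve equivalences, already does this job.

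For the right leg $v$, I would check the two stated properties separately. Essential surjectivity is tautological by the definition of $\mathrm{Funct}_i(\mathcal{P}_2(M),T)$ as the full sub-2-category of those $F \maps \mathcal{P}_2(M) \to T$ that admit some $\pi$-local $i$-trivialization: for any such $F$, any choice of $\pi$ and of a $\pi$-local $i$-trivialization $(\mathrm{triv},t)$ (in the sense of Definition \ref{sec2_def5}) produces an object $(F,\mathrm{triv},t)$ of $\mathrm{Triv}^2_\pi(i)$, hence of the direct limit $\mathrm{Triv}^2(i)_M$, that maps to $F$ under $v$. The identity-on-Hom-categories property is immediate from the definition of $\loctrivfunct{i}{\mathrm{Gr}}{2}$: 1-morphisms are, by construction, pseudonatural transformations between the underlying 2-functors (with the local trivializations discarded), and 2-morphisms are modifications between those; this description is preserved when passing to the Grothendieck construction that models the direct limit. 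Hence $v$ is the identity on Hom-categories and therefore an equivalence of 2-categories. Composing $v$ with any quasi-inverse of $\mathrm{Ex}$ then yields the desired equivalence.

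No substantial obstacle is expected, since Theorem \ref{th:main} is essentially a packaging of results already proved. The only point meriting mild care is that the construction of $\mathrm{Ex}$ at the level of direct limits requires $\ex{\pi}$ to commute with the restriction 2-functors of \erf{eq:res2funct} — an inspection noted just before Proposition \ref{prop:equivlimit} — and that essential surjectivity of $v$ uses only the \emph{existence}, not a functorial choice, of a local trivialization, which is exactly what the definition of $\mathrm{Funct}_i(\mathcal{P}_2(M),T)$ supplies.
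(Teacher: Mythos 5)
Your proposal is correct and follows essentially the same route as the paper: the theorem is realized by the span $\mathfrak{Des}^2(i)_M \leftarrow \mathrm{Triv}^2(i)_M \rightarrow \mathrm{Funct}_i(\mathcal{P}_2(M),T)$, with the left leg being Proposition \ref{prop:equivlimit} and the right leg $v$ an equivalence because it is essentially surjective (by definition of $\mathrm{Funct}_i$) and the identity on Hom-categories. The paper states this just as briefly, so no further comment is needed.
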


Theorem \ref{th:main} is the main result of this article. In \cite{schreiber2} we restrict it to an equivalence between important sub-2-categories: the one of \emph{smooth descent data} (on the left hand side), and the one of \emph{transport 2-functors} (on the right hand side). Transport 2-functors are an axiomatic formulation of connections on non-abelian gerbes.

\setsecnumdepth{1}

\begin{appendix}

\section{Basic 2-Category Theory}
\label{app1}

We introduce notions and facts that we need in this article. For a more complete introduction to 2-categories, see e.g.  \cite{leinster1}.

\begin{definition}
\label{app2cat_def2}
A (small) \emph{2-category} consists of a set of objects, for each pair $(X,Y)$  of objects a set of 1-morphisms denoted $f \maps X \to Y$ and for each pair $(f,g)$ of 1-morphisms $f,g \maps X \to Y$ a set  of 2-morphisms denoted $\varphi \maps f \Rightarrow g$, together with the following structure:
\begin{enumerate}

\item
For every pair $(f,g)$ of 1-morphisms $f \maps  X \to Y$ and $g \maps Y \to Z$, a 
1-morphism $g \circ f \maps X \to Y$, called the composition of $f$ and $g$.

\item 
For every  triple $(f,g,h)$ of 1-morphisms $f \maps W \to X$, $g \maps  X \to Y$ and $h \maps  Y \to Z$, a 2-morphism
\begin{equation*}
a_{f,g,h}  \maps  (h \circ g) \circ f \Rightarrow h \circ (g \circ f)
\end{equation*}
called the associator of $f$, $g$ and $h$.

\item
For every object $X$, a 1-morphism $\id_X \maps X \to X$, called the identity 1-morphism of $X$.

\item
For every 1-morphism $f \maps X \to Y$, 2-morphisms $l_f \maps  f \circ \id_X \Rightarrow f$ and $r_f \maps  \id_Y \circ f \Rightarrow f$, called the left and the right \unifier.
\item
For every pair $(\varphi,\psi)$ of 2-morphisms $\varphi \maps  f \Rightarrow g$ and $\psi \maps  g \Rightarrow h$, a 2-morphism $\psi \bullet \varphi \maps  f \Rightarrow h$, called the vertical composition of $\varphi$ and $\psi$.

\item
For every 1-morphism $f$, a 2-morphism $\id_{f} \maps f \Rightarrow f$, called the identity 2-morphism  of $f$.

\item
For every triple $(X,Y,Z)$ of objects, 1-morphisms $f,f' \maps X \to Y$ and  $g,g' \maps Y \to Z$, and every pair $(\varphi,\psi)$ of 2-morphisms $\varphi \maps f \Rightarrow f'$ and $\psi \maps g \Rightarrow g'$, a 2-morphism $\psi \circ \varphi \maps  g \circ f \Rightarrow g' \circ f'$, called the horizontal composition of $\varphi$ and $\psi$.

\end{enumerate}
This structure has to satisfy the following list of axioms:
\begin{enumerate}
\item[(C1)] The vertical composition of 2-morphisms is associative,
\begin{equation*}
(\phi \bullet \varphi) \bullet \psi =\phi \bullet (\varphi \bullet \psi)
\end{equation*} 
whenever these compositions are well-defined, while the horizontal composition is compatible with the associator in the sense that the diagram
\begin{equation*}
\alxydim{@C=1.8cm@R=1.8cm}{(h \circ g) \circ f \ar@{=>}[d]_{a_{f,g,h}} \ar@{=>}[r]^-{(\psi \circ \varphi) \circ \phi} & (h' \circ g') \circ f' \ar@{=>}[d]^{a_{f',g',h'}} \\ h \circ (g \circ f) \ar@{=>}[r]_-{\psi \circ (\varphi \circ \phi)} & h' \circ ( g' \circ f')}
\end{equation*}
is commutative. 

\item[(C2)] 
The identity 2-morphisms are units with respect to vertical composition,
\begin{equation*}
\varphi \bullet \id_f = \id_g \bullet \varphi
\end{equation*}
for every 2-morphism $\varphi \maps f \Rightarrow g$, while the identity 1-morphisms are compatible with the \unifier s and the associator in the sense that the diagram
\begin{equation*}
\alxydim{@R=1.1cm@C=0.4cm}{(g \circ \id_Y) \circ f \ar@{=>}[rr]^{a_{f,\id_Y,g}} \ar@{=>}[dr]_{l_g \circ \id_f} && g \circ (\id_Y \circ f) \ar@{=>}[dl]^{\id_g \circ r_f} \\ & g \circ f}
\end{equation*}
is commutative. Horizontal composition preserves the identity 2-morphisms in the sense that
\begin{equation*}
\id_g \circ \id_f = \id_{g \circ f}\text{.}
\end{equation*}

\item[(C3)]
Horizontal and vertical  compositions are compatible in the
sense that 
\begin{equation*}
(\psi_1 \bullet \psi_2) \circ (\varphi_1 \bullet \varphi_2) = (\psi_1 \circ\varphi_1)\bullet(\psi_2\circ\varphi_2)
\end{equation*}
whenever these compositions are well-defined. 

\item[(C4)]
All associators and \unifier s are invertible 2-morphisms and  natural in $f$, $g$ and $h$, and the associator satisfies the pentagon axiom
\begin{equation*}
\alxydim{@C=-0.7cm@R=1.3cm}{&&& ((k \circ h) \circ g) \circ f \ar@{=>}[dlll]_{a_{g,h,k} \circ \id_f} \ar@{=>}[drrr]^{a_{f,g,k\circ h}} &&& \\ (k \circ (h \circ g)) \circ f \ar@{=>}[drr]_{a_{f,h \circ g,k}} &&&&&&(k \circ h) \circ (g \circ f) \ar@{=>}[dll]^{a_{g\circ f,h,k}} \\ && k \circ (( h \circ g ) \circ f) \ar@{=>}[rr]_{\id_k \circ a_{f,g,h}} && k \circ (h \circ (g \circ f))\text{.} &&}
\end{equation*}
\end{enumerate}
\end{definition}

In (C4) we have called a 2-morphism $\varphi \maps f \Rightarrow g$ \emph{invertible} or 2-\emph{iso}morphism, if there exists a 2-morphism $\psi \maps  g \Rightarrow f$ such that $\psi \bullet \varphi = \id_{f}$ and $\varphi\bullet\psi=\id_{g}$. The axioms imply a \emph{coherence theorem}: all diagrams of 2-morphisms whose arrows are labelled by associators,  right or left \unifier s, and identity 2-morphisms, are commutative.
A 2-category is  called \emph{strict}, if
\begin{equation*}
(h \circ g) \circ f = h \circ (g \circ f)
\quad\text{ and }\quad
a_{f,g,h}=\id_{h \circ g \circ f}
\end{equation*}
for all triples $(f,g,h)$ of composable 1-morphisms, and if
\begin{equation*}
f \circ \id_X = f = \id_Y \circ f
\quad\text{ and }\quad
r_f = l_f =  \id_f
\end{equation*}
for all 1-morphisms $f$. 
Strict 2-categories allow us to draw pasting diagrams, since multiple compositions of 1-morphisms are well-defined without putting brackets. Pasting diagrams are often more instructive than commutative diagrams of 2-morphisms. 
For an explicit discussion of the strict case the reader is referred to Appendix A.1 in \cite{schreiber5}.

\begin{example}
\label{app2cat_ex2}
Let $\mathfrak{C}$ be a monoidal category, i.e. a category equipped with a functor $\otimes \maps  \mathfrak{C} \times \mathfrak{C} \to \mathfrak{C}$, a distinguished object $\trivlin$  in $\mathfrak{C}$, a natural transformation $\alpha$ with components
\begin{equation*}
\alpha_{X,Y,Z} \maps  (X \otimes Y) \otimes Z \to X \otimes (Y \otimes Z)\text{,}
\end{equation*}
and natural transformations $\rho$ and $\lambda$ with components
\begin{equation*}
\rho_X \maps  \trivlin \otimes X \to X
\quad\text{ and }\quad
\lambda_X \maps  X \otimes \trivlin \to X
\end{equation*}
which are subject to the usual coherence conditions, see, e.g. \cite{maclane2}. The monoidal category $\mathfrak{C}$ defines a 2-category $\mathcal{B}\mathfrak{C}$ in the following way: it has a single object,  the 1-morphisms are the objects of $\mathfrak{C}$ and the 2-morphisms between two 1-morphisms $X$ and $Y$ are the morphisms $f \maps X \to Y$ in $\mathfrak{C}$. The composition of 1-morphisms and the horizontal composition is the tensor product $\otimes$, and the associator $a_{X,Y,Z}$ is given by the component $\alpha_{Z,Y,X}$. The identity 1-morphism is the tensor unit $\trivlin$, and the \unifier s are given by the natural transformations $\rho$ and $\lambda$. The vertical composition  and the identity are just the ones of $\mathfrak{C}$. It is straightforward to check that axioms (C1) to (C4) are either satisfies due to the axioms of the category $\mathfrak{C}$, the functor $\otimes$, or the natural transformations $\alpha$, $\rho$ and $\lambda$, or due to the coherence axioms.
The 2-category $\mathcal{B}\mathfrak{C}$ is strict if and only if the monoidal category $\mathfrak{C}$ is strict. 
\end{example}
 
 In any 2-category, a 1-morphism
$f \maps  X \to Y$ is called \emph{invertible}, if there exists
another 1-morphism $g \maps  Y \to X$ together with natural 2-isomorphisms $i
 \maps  g \circ f \Rightarrow \id_X$ and $j  \maps  \id_Y \Rightarrow f \circ
g$ such that the diagrams
\begin{equation}
\label{app2cat_1}
\alxydim{@C=1.5cm@R=1cm}{(f \circ g) \circ f \ar@{=>}[d]_{a_{f,g,f}} \ar@{=>}[r]^-{j^{-1} \circ \id_f} \ar@{=>}[d]_{} & \id_Y \circ f \ar@{=>}[dd]^{r_f} \\ f \circ (g \circ f) \ar@{=>}[d]_-{\id_f \circ i} & \\ f \circ \id_X \ar@{=>}[r]_-{l_f} & f}
\quad\text{ and }\quad
\alxydim{@C=1.5cm@R=1cm}{(g \circ f) \circ g \ar@{=>}[d]_{a_{g,f,g}} \ar@{=>}[r]^-{i \circ \id_g} \ar@{=>}[d]_{} & \id_X \circ g \ar@{=>}[dd]^{r_g} \\ g \circ (f \circ g) \ar@{=>}[d]_-{\id_g \circ j^{-1}} & \\ g \circ \id_Y \ar@{=>}[r]_-{l_g} & g}
\end{equation}
are commutative. 
Let us remark that neither in the strict nor in the general case the inverse 1-morphism $g$ is uniquely determined. We call a triple $(g,i,j)$ a \emph{weak inverse} of $f$. By \emph{1-isomorphism} we mean an invertible 1-morphism together with a weak inverse.  

\begin{remark}
Often a 2-category is called bicategory, while a strict 2-category is called 2-category. Invertible 1-morphisms are often  called adjoint equivalences.
\end{remark}

\begin{definition}
\label{app2cat_def1}
A (strict) 2-category in which every 1-morphism and every 2-morphism is invertible, is called  (strict) \emph{2-groupoid}.
 \end{definition}

The following definition generalizes the one of  a functor between categories.

\begin{definition}
\label{app2cat_def4}
Let $S$ and $T$ be two 2-categories.
A \emph{2-functor} $F \maps S \to T$ assigns
\begin{enumerate}
\item 
an object $F(X)$ in $T$ to each object $X$ in $S$,
\item a 1-morphism $F(f) \maps F(X) \to F(Y)$ in $T$ to each 1-morphism $f \maps X \to Y$ in $S$, and 
\item
a 2-morphism $F(\varphi) \maps F(f) \Rightarrow F(g)$ in $T$ to each 2-morphism $\varphi \maps  f \Rightarrow g$ in $S$. 
\end{enumerate}
Furthermore, it has
\begin{enumerate}
\item[(a)]
a  2-isomorphism $u_{X} \maps F(\id_X) \Rightarrow \id_{F(X)}$
in $T$ for each object $X$ in $S$, and
\item[(b)]
a  2-isomorphism
$c_{f,g} \maps  F(g) \circ F(f) \Rightarrow F(g\circ f)$
in $T$  for each  pair of composable 1-morphisms $f$ and $g$ in S. 
\end{enumerate}
Four axioms
have to be satisfied:
\begin{enumerate}
\item[(F1)]
The vertical composition is respected in the sense that
\begin{equation*}
F(\psi \bullet \varphi) = F(\psi) \bullet F(\varphi)
\quad\text{ and }\quad
F(\id_f) = \id_{F(f)}
\end{equation*}
for all composable 2-morphisms $\varphi$ and $\psi$, and any 1-morphism $f$.
\item[(F2)]
The horizontal composition is respected in the sense that the diagram
\begin{equation*}
\alxydim{@C=1.8cm@R=1.3cm}{F(g) \circ F(f) \ar@{=>}[r]^{F(\psi) \circ F(\varphi)} \ar@{=>}[d]_{c_{f,g}} & F(g') \circ F(f') \ar@{=>}[d]^{c_{f',g'}} \\ F(g \circ f) \ar@{=>}[r]_{F(\psi \circ \varphi)} & F(g' \circ f')}
\end{equation*}
is commutative for all horizontally composable 2-morphisms $\varphi$ and $\psi$. 

\item[(F3)] 
The compositor $c_{f,g}$ is compatible with the associators of $S$ and $T$ in the sense that the diagram
\begin{equation*}
\alxydim{@C=2.4cm}{(F(h) \circ F(g)) \circ F(f) \ar@{=>}[r]^{a_{F(f),F(g),F(h)}} \ar@{=>}[d]_{c_{g,h}\circ \id_{F(f)}} & F(h) \circ (F(g) \circ F(f)) \ar@{=>}[d]^{\id_{F(h)} \circ c_{f,g}} \\ F(h \circ g) \circ F(f) \ar@{=>}[d]_{c_{f,h \circ g}} & F(h) \circ F(g \circ f) \ar@{=>}[d]^{c_{g \circ f,h}} \\ F((h \circ g) \circ f) \ar@{=>}[r]_{F(a_{f,g,h})} & F(h \circ (g \circ f))}
\end{equation*}
is commutative for all composable 1-morphisms $f$, $g$ and $h$.

\item[(F4)]
Compositor and unitor are compatible with the \unifier s of $S$ and $T$ in the sense that the diagrams
\begin{equation*}
\alxydim{@C=0.9cm@R=1.3cm}{F(f) \circ F(\id_X) \ar@{=>}[r]^-{c_{\id_X,f}} \ar@{=>}[d]|{\id_{F(f)} \circ u_X} & F(f \circ \id_X) \ar@{=>}[d]|{F(l_{f})} \\ F(f) \circ \id_{F(X)} \ar@{=>}[r]_-{l_{F(f)}}  & F(f)}
\text{ and }
\alxydim{@C=0.9cm@R=1.3cm}{F(\id_Y) \circ F(f) \ar@{=>}[r]^-{c_{f,\id_Y}} \ar@{=>}[d]|{u_Y \circ \id_{F(f)}} & F(\id_Y \circ f) \ar@{=>}[d]|{F(r_{f})} \\ \id_{F(Y)}\circ F(f) \ar@{=>}[r]_-{r_{F(f)}}  & F(f)}
\end{equation*}
are commutative for every 1-morphism $f$.
\end{enumerate}
\end{definition}
Sometimes we represent a 2-functor $F \maps S \to T$ diagrammatically as an assignment
\begin{equation*}
F \quad:\quad 
\bigon{X}{Y}{f}{g}{\varphi}
\quad\longmapsto\quad
\bigon{F(X)}{F(Y)}{F(f)}{F(g)}{F(\varphi)}
\text{.}
\end{equation*}
In case that the 2-category $T$ is strict, and the axioms (F2) to (F4)  can be expressed by pasting diagrams in the following way:
\begin{itemize}
\item 
Axioms (F2) is equivalent to the equality
\begin{equation*}
\alxydim{}{&F(Y)  \ar@/_1.5pc/[rd]|-{F(g')}="4" \ar@/^1.3pc/[dr]^{F(g)}="3"\\F(X) \ar@/_3pc/[rr]_{F(g' \circ f')}="5" \ar@{=>}[ur];"5"|>>>>>>>{c_{f',g'}} \ar@/_1.5pc/[ru]|-{F(f')}="2" \ar@/^1.3pc/[ru]^{F(f)}="1" &  \ar@{=>}"1";"2"|{F(\varphi)}\ar@{=>}"3";"4"|{F(\psi)} & F(Z)}
=
\alxydim{}{&F(Y)\ar@/^1.2pc/[dr]^{F(g)}="7"& \\ F(X) \ar@/_3pc/[rr]_{F(g' \circ f')}="9"  \ar[rr]|{F(g \circ f)}="8" \ar@{=>}[ur];"8"|{c_{f,g}} \ar@/^1.2pc/[ru]^{F(f)}="6" &  & F(Z)\text{.} \ar@{=>}"8";"9"|{F(\psi \circ \varphi)}}
\end{equation*}
\item
Axiom (F3) is equivalent to the \tetraeder identity
\begin{equation*}
\alxydim{@C=1.5cm@R=1.7cm}{F(X) \ar[r]^{F(g)} \ar@{<-}[d]_{F(f)}
& F(Y) \ar[d]^{F(h)} \\ F(W)  \ar[ur]|{F(g \circ f)}="2" \ar@{=>}[u];"2"|{c_{f,g}} \ar[r]_{F(h \circ g \circ f)}="1"
&  \ar@{<=}"1";"2"|{c_{g \circ f,h}} F(Z)}
=
\alxydim{@C=1.5cm@R=1.7cm}{F(X) \ar[dr]|{F(h \circ g)}="2" \ar[r]^{F(g)} \ar@{<-}[d]_{F(f)}
& F(Y) \ar[d]^{F(h)}  \ar@{<=}"2";[]|{c_{h,g}}
 \\  F(W)  \ar[r]_{F(h \circ g \circ f)}="1"
\ar@{<=}"1";"2"|{c_{{f,h \circ g}}}& F(Z)\text{.}}
\end{equation*}
\item
Axiom (F4) is equivalent to the equalities
\begin{equation*}
c_{\id_X,f} = \id_{F(f)} \circ u_X
\quad\text{ and }\quad
c_{f,\id_Y} = u_Y \circ \id_{F(f)}\text{.}
\end{equation*}
\end{itemize}

We use two layers of strictness conditions for 2-functors, normally in a situation when $S$ and $T$ are strict 2-categories. Firstly, we call a 2-functor $F: S \to T$  \emph{normalized} if 
\begin{equation}
\label{respectunitors}
F(\id_X) = \id_{F(X)}
\quad\text{ and }\quad
u_X=\id_{\id_{F(X)}}
\end{equation}
for all objects $X$ in $S$,
 and if
\begin{equation}
\label{normalized2functor}
F(g) \circ F(f)= \id_{F(X)}
\quad\text{ and }\quad
 c_{f,g}=\id_{\id_X}
\end{equation} 
for all 1-morphisms  $f:X \to Y$ and $g:Y \to X$ such that $g\circ f=\id_X$. Roughly speaking, normalized 2-functors strictly respect identities and inverses. The second and stronger requirement is that the 2-functor $F \maps S \to T$ is  \emph{strict}: this requires \erf{respectunitors} while \erf{normalized2functor}   is superseded by the condition that
\begin{equation*}
F(g) \circ F(f)= F(g\circ f)
\quad\text{ and }\quad
 c_{f,g}=\id_{F(g \circ f)}
\end{equation*} 
for \emph{all} composable 1-morphisms $f$ and $g$.
In  case of strict 2-functors  between strict 2-categories only axioms (F1) and (F2) remain, claiming that both compositions are respected.

The following definition 
generalizes a natural transformation between  functors.

\begin{definition}
\label{app1_def1}
Let $F_1$ and $F_2$ be two  2-functors from $S$ to $T$.
A \emph{pseudonatural transformation}
$\rho\maps F_1 \to F_2$ assigns
\begin{enumerate}
\item 
a 1-morphism $\rho(X) \maps  F_1(X) \to F_2(X)$ in $T$ to each object $X$ in $S$, and
\item
a 2-isomorphism $\rho(f) \maps  \rho(Y) \circ F_1(f) \Rightarrow F_2(f) \circ \rho(X)$ in $T$ to each 1-morphism $f \maps X \to Y$ in $S$,
\end{enumerate}
such that two axioms are satisfied:
\begin{enumerate}
\item[(T1)]
The composition of 1-morphisms in $S$ is respected in the sense that the diagram
\begin{equation*}
\alxydim{@C=2.8cm}{(\rho(Z) \circ F_1(g)) \circ F_1(f) \ar@{=>}[r]^-{a_{F_1(f),F_1(g),\rho(Z)}} \ar@{=>}[d]_{\rho(g) \circ \id_{F_1(f)}} & \rho(Z) \circ (F_1(g) \circ F_1(f)) \ar@{=>}[d]^{ \id_{\rho(Z)}\circ (c_1)_{f,g}} \\ (F_2(g) \circ \rho(Y)) \circ F_1(f) \ar@{=>}[d]_{a_{F_1(f),\rho(Y),F_2(g)}} & \rho(Z) \circ F_1(g \circ f) \ar@{=>}[d]^{\rho(g \circ f)} \\ F_2(g) \circ ( \rho(Y) \circ F_1(f)) \ar@{=>}[d]_{\id_{F_2(g)} \circ \rho(f)} & F_2(g \circ f) \circ \rho(X) \ar@{=>}[d]^{(c_2)^{-1}_{f,g} \circ \id_{\rho(X)}} \\ F_2(g) \circ (F_2(f) \circ \rho(X)) \ar@{=>}[r]_{a^{-1}_{\rho(X),F_2(f),F_2(g)}} & (F_2(g) \circ F_2(f)) \circ \rho(X)}
\end{equation*}
is commutative for all composable 1-morphisms $f$ and $g$. Here, $a$ is the associator of the 2-category $T$ and $c_1$ and $c_2$ are the compositors of the 2-functors $F_1$ and $F_2$, respectively.

\item[(T2)]
It is natural  in the sense that the diagram
\begin{equation*}
\alxydim{@C=1.8cm@R=1.6cm}{\rho(Y) \circ F_1(f) \ar@{=>}[r]^-{\rho(f)} \ar@{=>}[d]_{\id_{\rho(Y)}\circ F_1(\varphi)} & F_2(f) \circ \rho(X) \ar@{=>}[d]^{F_2(\varphi) \circ \id_{\rho(X)}} \\ \rho(Y) \circ F_1(g) \ar@{=>}[r]_-{\rho(g)} & F_2(g) \circ \rho(X)}
\end{equation*}
is commutative for all 2-morphisms $\varphi \maps f \Rightarrow g$.

\end{enumerate}
\end{definition}

If one considers a version of \pt s where the 2-morphisms $\rho(f)$ do not have to be invertible, there is a third axiom related to the value of $\rho$ at the identity 1-morphism $\id_X$ of an object $X$ in $S$. In our setup this axiom is automatically satisfied, as the following lemma shows. 

\begin{lemma}
\label{app2cat_lem1}
Let $\rho \maps F_1 \to F_2$ be a \pt\ between 2-functors $F_1$ and $F_2$ with unitors $u^1$ and $u^2$, respectively. Then, the following assertions hold. 
\begin{enumerate}[(i)]
\item 
The diagram
\begin{equation*}
\alxydim{@C=1.4cm@R=1.3cm}{\rho(X) \circ F_1(\id_X) \ar@{=>}[rr]^{\rho(\id_X)} \ar@{=>}[d]_{\id_{\rho(X)} \circ u^1_X} && F_2(\id_X) \circ \rho(X) \ar@{=>}[d]^{u^2_X \circ \id_{\rho(X)}} \\ \rho(X) \circ \id_{F_1(X)} \ar@{=>}[r]_-{l_{\rho(X)}} & \rho(X) \ar@{=>}[r]_-{r_{\rho(X)}^{-1}} & \id_{F_2(X)} \circ  \rho(X)}
\end{equation*}
is commutative. 

\item
If $S$ and $T$ are strict 2-categories, and  $F_1$ and $F_2$ are normalized 2-functors, then 
\begin{equation*}
\rho(\id_X)=\id_{\rho(X) \circ F_1(\id_X)} = \id_{F_2(\id_X) \circ \rho(X)}
\end{equation*}
for all objects $X$ in $S$, and
\begin{equation*}
\rho(g) \circ \id_{F_1(f)} = \id_{F_2(g)} \circ \rho(f)^{-1} 
\end{equation*}
for all 1-morphisms $f,g$ in $S$ with $g \circ f = \id$.

\end{enumerate}
\end{lemma}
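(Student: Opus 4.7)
For part (i), my approach is to apply the naturality axiom (T1) to the (tautological) composition $\id_X \circ \id_X = \id_X$. This produces a hexagonal equation in which $\rho(\id_X)$ appears twice horizontally composed with itself, glued together by the compositors $(c_1)_{\id_X,\id_X}$ and $(c_2)_{\id_X,\id_X}$ and the various associators of $T$. Axiom (F4) applied to $F_1$ and to $F_2$ with $f=\id_X$ then translates each of these two compositors into the respective unitor $u^k_X$ together with the left and right unifiers of $T$. Substituting these identifications into the hexagon yields an equation involving $\rho(\id_X) \circ \rho(\id_X)$; since $\rho(\id_X)$ is an invertible 2-morphism (this is the key place where the invertibility built into our definition of \pt\ is used), we can cancel one copy and solve for $\rho(\id_X)$. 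A final clean-up using the coherence theorem to eliminate the remaining associators and unifiers exhibits the result as the stated commutative square.

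For the first equality in part (ii), everything collapses. Because $S$ and $T$ are strict, all associators are identities and the unifiers $l_{\rho(X)}$ and $r_{\rho(X)}$ are both $\id_{\rho(X)}$. Normality of $F_1$ and $F_2$ forces $F_k(\id_X)=\id_{F_k(X)}$ and $u^k_X=\id$. The diagram from (i) then specializes to the trivial equation $\rho(\id_X) = \id_{\rho(X)}$. Moreover, under these identifications the source $\rho(X) \circ F_1(\id_X)$ and the target $F_2(\id_X) \circ \rho(X)$ of $\rho(\id_X)$ both equal $\rho(X)$ on the nose, so this single 2-morphism can be rewritten as either $\id_{\rho(X) \circ F_1(\id_X)}$ or $\id_{F_2(\id_X) \circ \rho(X)}$.

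For the second equality, I apply (T1) to the composition $g \circ f = \id_X$. In the strict normalized setting the associators drop out, and the compositors $(c_k)_{f,g}$ are identities by normality; hence the hexagon collapses into the vertical identity
\[
(\id_{F_2(g)} \circ \rho(f)) \bullet (\rho(g) \circ \id_{F_1(f)}) \;=\; \rho(\id_X),
\]
where I have also used that $F_k(g \circ f)=\id_{F_k(X)}$. Combining with the first part of (ii), the right-hand side is $\id_{\rho(X)}$, so the two vertical factors on the left are inverses of each other. Since horizontal composition is a (strict) functor with respect to vertical composition (axiom (C3)), one has $(\id_{F_2(g)} \circ \rho(f))^{-1} = \id_{F_2(g)} \circ \rho(f)^{-1}$, which gives the claimed equality.

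The main obstacle is the coherence bookkeeping in part (i): one must keep careful track of the direction of each associator and unifier when reducing the hexagon from (T1) combined with (F4). Once (i) is in hand, both assertions in (ii) are direct collapses of the general diagram, using only that strict 2-categories and normalized 2-functors turn off the structure that obscures the issue.
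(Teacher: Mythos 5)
Your overall route coincides with the paper's: part (i) from axiom (T1) at $f=g=\id_X$ combined with (F4), part (ii) by specializing (i) under strictness and normalization and by applying (T1) to $g\circ f=\id_X$; the treatment of part (ii) is correct as written. Two points in part (i) deserve comment. First, the composition $\id_X\circ\id_X=\id_X$ is \emph{not} tautological unless the source 2-category is strict: in general $\id_X\circ\id_X$ is only isomorphic to $\id_X$ via the unifier of $S$, so the term $\rho(\id_X\circ\id_X)$ produced by (T1) must be compared with $\rho(\id_X)$ by applying axiom (T2) to the 2-isomorphism $l_{\id_X}\colon\id_X\circ\id_X\Rightarrow\id_X$ (together with (F4) to convert $F_k(l_{\id_X})$ into unitors and unifiers). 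The paper explicitly lists (T2) among the needed ingredients, and your plan omits it; since the lemma's part (i) is stated without strictness hypotheses, this is a genuine, though easily repaired, gap. Second, your cancellation step is phrased loosely -- $\rho(\id_X)\circ\rho(\id_X)$ is not a well-typed horizontal composite; what (T1) actually produces is the pasting $(\id_{F_2(\id_X)}\circ\rho(\id_X))\bullet(\rho(\id_X)\circ\id_{F_1(\id_X)})$ up to associators. Nevertheless the mechanism you propose does work: rewriting each whiskered copy as a conjugate of $\rho(\id_X)$ by whiskered unitors (via the interchange law and naturality of the unifiers), the outer conjugating factors match the compositor terms supplied by (F4), and cancelling one copy of the invertible 2-morphism $\rho(\id_X)$ yields exactly the stated square. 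This differs mildly from the paper, which instead also invokes the invertibility of the \emph{1-morphism} $F_2(\id_X)$ to cancel a whiskering; both cancellations are legitimate, and yours avoids having to argue that whiskering by an invertible 1-morphism is faithful.
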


\begin{proof}
For (i) one applies axiom (T1) to 1-morphisms $f=g=\id_X$. Then one uses axiom (T2) for $\rho$, axiom (F4) for both 2-functors, axiom (C2) for $T$, and the invertibility of the 2-morphism $\rho(g)$ and of the 1-morphism $F_2(\id_X)$. The first claim of (ii) follows from (i) under the strictness assumptions, and the second claim follows from the first claim and  axiom (T1).
\end{proof}

Sometimes we represent a \pt\ $\rho \maps F_1 \to F_2$ diagrammatically by
\begin{equation*}
\rho \quad:\quad
\alxy{ X \ar[r]^{f} & Y}
\quad\longmapsto\quad
\alxydim{@C=1.2cm@R=1.2cm}{F_1(X) \ar[r]^{F_1(f)} \ar[d]_{\rho(X)} & F_1(Y) \ar[d]^{\rho(Y)}
\ar@{=>}[dl]|{\rho(f)} \\ F_2(X) \ar[r]_{F_2(f)} & F_2(Y)\text{,}}
\end{equation*}
and if the 2-category $T$ is strict, the axioms can be expressed by pasting diagrams in the following way:
\begin{itemize}
\item 
Axiom (T1) is equivalent to
\begin{equation*}
\alxydim{@C=1.2cm@R=1.2cm}{F_1(X) \ar[r]^{F_1(f)} \ar[d]_{\rho(X)} & F_1(Y) \ar[r]^{F_1(g)} \ar[d]|{\rho(Y)}
\ar@{=>}[dl]|{\rho(f)} &F_1(Z) \ar@{=>}[dl]|{\rho(g)} \ar[d]^{\rho(Z)} \\ F_2(X) \ar@/_3pc/[rr]_{F_2(g \circ f)}="1" \ar[r]_{F_2(f)} & F_2(Y) \ar@{=>}"1"|{(c_2)_{f,g}}\ar[r]_{F_2(g)} & F_2(Z)}
=
\alxydim{@C=1.2cm@R=1.2cm}{F_1(X) \ar@/^3pc/[r]^{F_1(g) \circ F_1(f)}="1"
  \ar[r]_{F_1(g \circ f)}="2" \ar@{=>}"1";"2"|{(c_1)_{f,g}} \ar[d]_{\rho(X)} & F_1(Z) \ar[d]^{\rho(Z)}
\ar@{=>}[dl]|{\rho(g \circ f)} \\ F_2(X) \ar[r]_{F_2(g \circ
 f)} & F_2(Z)\text{.}}
\end{equation*}
\item
Axiom (T2) is equivalent to
\begin{equation*}
\alxydim{@C=1.2cm@R=1.2cm}{F_1(X) \ar[r]^{F_1(f)} \ar[d]_{\rho(X)} & F_1(Y) \ar[d]^{\rho(Y)}
\ar@{=>}[dl]|{\rho(f)} \\ F_2(X) \ar@/_3pc/[r]_{F_2(g)}="2"   \ar[r]^{F_2(f)}="1" & F_2(Y) \ar@{=>}"1";"2"|{F_2(\varphi)}}
=
\alxydim{@C=1.2cm@R=1.2cm}{F_1(x) \ar@/^3pc/[r]^{F_1(f)}="1" \ar[r]_{F_1(g)}="2"
\ar@{=>}"1";"2"|{F_1(\varphi)} \ar[d]_{\rho(X)} & F_1(Y) \ar[d]^{\rho(Y)}
\ar@{=>}[dl]|{\rho(g)} \\ F_2(X) \ar[r]_{F_2(g)} & F_2(Y)\text{.}}
\end{equation*}
\end{itemize}

We need one more definition for situations where two 
\pt s are present.

\begin{definition}
Let $F_1,F_2 \maps S \to T$ be two  2-functors and let $\rho_1,\rho_2 \maps  F_1\to F_2$
be \pt s. A \emph{modification}
$\mathcal{A} \maps \rho_1 \Rightarrow \rho_2$ assigns a 2-morphism 
\begin{equation*}
\mathcal{A}(X) \maps  \rho_1(X) \Rightarrow \rho_2(X)
\end{equation*}
in $T$ to any object $X$ in $S$,
such that the diagram
\begin{equation}
\label{app2cat_2}
\alxydim{@C=1.6cm@R=1.6cm}{\rho_1(Y) \circ F_1(f) \ar@{=>}[r]^{\rho_1(f)} \ar@{=>}[d]_{\mathcal{A}(Y) \circ \id_{F_1(f)}} & F_2(f) \circ \rho_1(X) \ar@{=>}[d]^{\id_{F_2(f)} \circ \mathcal{A}(X)} \\ \rho_2(Y) \circ F_1(f) \ar@{=>}[r]_{\rho_2(f)} & F_2(f) \circ \rho_2(X)}
\end{equation}
is commutative for every 1-morphism $f$ in $S$.
\end{definition}
In the case that $T$ is a strict 2-category, the latter diagram is equivalent to a pasting diagram, see Definition A.4 in \cite{schreiber5}.

As one might expect, 2-functors, \pt s, and modifications fit  into the structure of a 2-category that we denote by $\mathrm{Funct}(S,T)$.
It is strict if and only if $T$ is strict.
Note that the definition of  invertibility in a 2-category applies;  we  
call a 2-isomorphism in the 2-category $\mathrm{Funct}(S,T)$  \emph{invertible
modification}, and a 1-isomorphism    \emph{\pe}.  This leads to the following 

\begin{definition}
Let $S$ and $T$ be  2-categories. 
A  2-functor $F \maps S \to T$ is called
an \emph{equivalence of 2-categories}, if there exists a   2-functor $G \maps T \to S$ together with \pe s $\rho_S \maps  G \circ F \to \id_S$ and $\rho_T \maps 
F \circ G \to \id_T$. 
\end{definition}

\end{appendix}

\kobib{../../bibliothek/tex}

\end{document}